\documentclass[12pt]{amsart}

\usepackage{amsmath}
\usepackage{amsfonts}
\usepackage{amssymb}
\usepackage{graphicx}
\usepackage{mathrsfs}
\usepackage{pb-diagram}
\usepackage{epstopdf}
\usepackage{amscd}
\usepackage{color}
\usepackage{verbatim}
\usepackage[all]{xy}
\addtolength{\textwidth}{+4cm} \addtolength{\textheight}{+2cm}
\hoffset-2cm \voffset-1cm \setlength{\parskip}{5pt}
\setlength{\parskip}{5pt}

\usepackage{caption}
\usepackage{subcaption}
\usepackage{pstricks}

\newtheorem{theorem}{Theorem}[section]
\newtheorem{lemma}[theorem]{Lemma}
\newtheorem{corollary}[theorem]{Corollary}

\newtheorem{prop}[theorem]{Proposition}
\newtheorem{definition}[theorem]{Definition}

\newtheorem{remark}[theorem]{Remark}

\newtheorem{assumption}[theorem]{Assumption}

\numberwithin{equation}{section}

\newcommand{\conn}{\nabla}

\newcommand{\bP}{\mathbb{P}}
\newcommand{\Z}{\mathbb{Z}}

\newcommand{\R}{\mathbb{R}}
\newcommand{\C}{\mathbb{C}}

\newcommand{\CL}{\mathcal{L}}

\newcommand{\bi}{\mathbf{i}\,}

\newcommand{\uz}{{\underline{z}}}
\newcommand{\up}{\underline{p}}

\newcommand{\CM}{\mathcal{M}}

\newcommand{\Hom}{\mathrm{Hom}}

\newcommand{\Id}{\mathrm{Id}}

\newcommand{\Jac}{\mathrm{Jac}}
\newcommand{\Hol}{\mathrm{Hol}}
\newcommand{\Fuk}{\mathrm{Fuk}}
\newcommand{\MF}{\mathrm{MF}}
\newcommand{\eff}{\mathrm{eff}}
\newcommand{\largewedge}{\mbox{\Large $\wedge$}}

\newcommand{\AI}{A_\infty}
\newcommand{\WT}[1]{\widetilde{#1}}

\newcommand{\first}{{\zeta^F}}
\newcommand{\last}{{\zeta^L}}
\newcommand{\one}{{\mathbf{1}}}

\newcommand{\cL}{\mathcal{L}}
\newcommand{\uL}{{\underline{L}}}
\newcommand{\bL}{{\mathbb{L}}}

\newcommand{\Pal}{\mathrm{Pal}}

\begin{document}

\title{Localized mirror functor constructed from a Lagrangian torus}


\author[Cho]{Cheol-Hyun Cho}
\address{Department of Mathematical Sciences, Research institute of Mathematics\\ Seoul National University\\ Gwanak-ro 1\\ Gwanak-gu \\Seoul 151-747\\ Korea\\
}
\email{chocheol@snu.ac.kr}

\author[Hong]{Hansol Hong}
\address{Center of Mathematical Sciences and Applications \\ Harvard University \\ 20 Garden Street \\ Cambridge MA 02138}
\email{hhong@cmsa.fas.harvard.edu, hansol84@gmail.com}

\author[Lau]{Siu-Cheong Lau}
\address{Department of Mathematics\\ Boston University\\ 111 Cummington Mall \\ Boston MA 02215}
\email{s.lau@math.harvard.edu}


\begin{abstract}
Fixing a weakly unobstructed Lagrangian torus in a symplectic manifold $X$, we define a holomorphic function $W$ known as the Floer potential.  We construct a canonical $\AI$-functor from the Fukaya category of $X$ to the category of matrix factorizations of $W$.  It provides a unified way to construct matrix factorizations from Lagrangian Floer theory. 
The technique is applied to toric Fano manifolds to transform Lagrangian branes to matrix factorizations.  Using the method, we also obtain an explicit expression of the matrix factorization mirror to the real locus of the complex projective space.
\end{abstract}

\maketitle

\section{Introduction}
Homological mirror symmetry conjecture by Kontsevich \cite{kontsevich94} asserts that for a pair of mirror manifolds $(X,\check{X})$, the derived Fukaya category of Lagrangian submanifolds in $X$ is equivalent to the derived category of coherent sheaves on $\check{X}$.  The study of homological mirror symmetry leads to many new insights to Fukaya categories and computational techniques for proving the conjecture in various cases.

More generally when $X$ is not required to be Calabi-Yau, the mirror of $X$ is a Landau-Ginzburg model $W$, which is a holomorphic function rather than a manifold.  Intuitively, the singular locus of $W$ (which is not necessarily smooth nor connected) is the space mirror to $X$.  Homological mirror symmetry can still be stated by using the category of matrix factorizations of $W$ \cite{Ei,B,Or} (in place of the derived category of coherent sheaves), or the Fukaya-Seidel category of $W$ \cite{Se} (in place of the Fukaya category).

In \cite{CHL}, we proposed and constructed a functor to realize homological mirror symmetry using immersed Lagrangian Floer theory.  We used the formal deformations and obstructions coming from the self-intersections of a fixed Lagrangian immersion $\bL$ in $X$ to construct a Floer potential which serves as a Landau-Ginzburg mirror $W$.  Given any Lagrangian $L_1$ in the Fukaya category, the Lagrangian intersection theory between the immersion $\bL$ and $L_1$ is used to construct the mirror object $R$ of $L_1$.  By the result of Orlov \cite{Or}, for a Landau-Ginzburg model $W$, the appropriate objects to consider for singularity theory of $W$ are matrix factorizations, which are endomorphisms $R$ of vector bundles satisfying $R^2 = (W - c) \cdot \Id$ for some constant $c$.   In \cite{CHL} the theory was applied to the orbifold spheres $\bP^1_{(a,b,c)}$, and an inductive method was found in \cite{CHKL} to deduce an explicit expression of the Landau-Ginzburg mirror $W$ (which contains infinitely many terms).

In this paper, we fix the reference to be a smooth Lagrangian torus $\bL$ rather than an immersed Lagrangian.\footnote{The assumption that $\bL$ is a torus is actually not essential.  We concentrate on torus because it plays a central role in SYZ mirror symmetry.}  For simplicity we make the technical assumption that $(X,\bL)$ is positive (see Assumption \ref{assum1}), although this is not necessary if one uses the full machinery of Lagrangian Floer theory.
Similar to \cite{CHL}, we use Lagrangian intersection theory to construct a Landau-Ginzburg model $W=W ({\bL})$ and an $\AI$-functor from the Fukaya category to the category of matrix factorizations of $W$.  Flat $\C^\times$ connections plays a key role in this setup, since they serve as the formal (complexified) deformations of $\bL$.  The essential issue coming from considering flat $\C^\times$ connections, rather than self-intersections of an immersion, is the choice of gauge.  Different choices of gauge for the same connection result in different expressions of the functor, and we need to make a consistent choice to make sure the functor is well-defined, and study the effect of gauge change.

The fundamental idea of constructing a mirror functor for a Lagrangian torus fibration (with mild singularities)
goes back to the work of Fukaya \cite{Fu2}, \cite{Fu3}, who introduced family Floer homology of Lagrangian torus fibers under certain assumptions.  More recently, Abouzaid \cite{Ab,Ab-faith} studied the family Floer theory for Lagrangian torus fibration without singular fibers.  The mirror functor constructed in this paper is a local piece of the family Floer functor near a Lagrangian torus fiber $\bL$, which has the advantage that it can be explicitly computed.  In the absence of singular fibers such as in the case of toric manifolds, these local pieces can be glued together to give the global functor.

We summarize the construction as follows.  First fix a weakly unobstructed 
smooth Lagrangian torus $\bL$ in a symplectic manifold $X$.  We define a holomorphic function $W$ 
on the space $(\C^\times)^n$ of flat $\C^\times$-connections $\nabla$ by using the $m_0$-term of the $\AI$ algebra $\mathrm{CF}^*((\bL,\nabla),(\bL,\nabla))$. 
Geometrically $W$ is obtained from counting holomorphic discs of Maslov index two bounded by $(\bL,\nabla)$ (see Definition \ref{def:W}).  In general $W$ should serve as a part of a global Landau-Ginzburg mirror to $X$.  
This method was used by the joint work \cite{CO} of the first author with Oh, and Fukaya-Oh-Ohta-Ono \cite{FOOOT} to construct the mirrors of toric manifolds.

Now comes the main construction of this paper.  To transform a (weakly unobstructed) Lagrangian $L_1$ to a matrix factorization, we take the Lagrangian Floer `complex' between $(\bL,\nabla)$ and $L_1$.  The differential does not square to zero; indeed it follows from the $A_\infty$ relations that the differential squares to $W - \lambda$, where $\lambda$ is given by $m_0^{L_1} = \lambda \cdot \one_{L_1}$, and thereby the Lagrangian Floer `complex' is indeed a matrix factorization of $W$.  The strategy of constructing matrix factorizations using Lagrangian Floer theory was found by Oh \cite{Oh1} and \cite{FOOO}.

Note that the same flat connection $\nabla$ admits different choices of gauge.  The resulting matrix factorizations depend on such a choice.   Moreover terms like $z^a$ for $a \not\in \Z$ could appear if the gauge is chosen arbitrarily.  To make sure the resulting matrix factorizations are still defined over the Laurent series ring, we make the following gauge choice for the flat connections over $\bL$.  Namely, we always require that the flat connections are trivial away from small neighborhoods of certain fixed codimension-one tori (called hyper-tori).  
Then holonomy of a flat connection over a path in $\bL$ can be expressed in terms of the number of intersections of the path with the hyper-tori, which are integer-valued.  It ensures that we still stay inside the Laurent series ring.  Moreover, we can show that the resulting matrix factorization does not depend on the choice of hyper-tori, nor a representative in the Hamiltonian isotopy class of $L_1$ (see Section \ref{sec:inv}).  As a result, we have the following.

\begin{theorem} \label{thm:intro-F}
Fix a smooth Lagrangian submanifold $\bL \subset X$.  There exists a Floer potential $W$ defined over the space of flat $\C^\times$ connections over $\bL$, and an $A_\infty$ functor from $\Fuk_\lambda (X)$ to $\MF(W-\lambda)$ for each $\lambda \in \Lambda$, where $\Fuk_\lambda (X)$ is the Fukaya category of weakly unobstructed Lagrangian submanifolds $L$ with $m_0^L = \lambda \one_L$, and $\Lambda$ denotes the Novikov field.
\end{theorem}


The mirror functor is computable by using {\em pearl complex} introduced by Biran-Cornea \cite{BC} (decorated with flat complex line bundles for the purpose of this paper), which is explained in Section \ref{sec:pearl}.  

In Section \ref{sec:toricFano}, we apply our construction to toric Fano manifolds, and transform Lagrangian torus fibers (decorated by flat connections) to matrix factorizations of the mirror.  \footnote{We expect that the method in this paper works for general compact toric manifolds.  To avoid technical issues in Lagrangian Floer theory, we restrict to the Fano case.}  However, the mirror matrix factorizations are hard to be fully computed.  

On the other hand, we find that the leading-order terms of such a matrix factorization $R$ always form another matrix factorization $R_0$.  We deduce an explicit closed formula for $R_0$ and show that it is of {\em wedge-contraction type}, and hence it is a generator of the category of matrix factorizations by the result of Dyckerhoff \cite{Dyc}.  By spectral-sequence argument of Polishchuk-Vaintrob \cite{PV}, we deduce that terms in $R-R_0$ do not contribute to cokernel, and hence $R$ is also a generator of the category.

We summarize the main result as follows.  We switch from the Novikov field $\Lambda$ to the complex field $\C$ by substituting the Novikov formal variable $T$ by $e^{-1}$, in order to match with Hori-Vafa mirrors of toric Fano manifolds.  It is valid since both $W$ and the matrix factorization $R$ have finitely many terms in the Fano case.  For non-Fano cases one should stick with the Novikov field $\Lambda$.

\begin{theorem} \label{thm:main}
Let $X$ be a toric Fano $n$-fold whose moment map polytope is given by
$$\{u: \langle v_i,u \rangle - \lambda_i \geq 0 \textrm{ for all } i=1,\ldots,m\}$$
where $m$ is the number of primitive generators $v_i$ of the corresponding fan and $\lambda_i \in \R$ are some fixed constants.  Without loss of generality we assume $v_1,\ldots,v_n$ form an integral basis.  Let
$$W = \sum_{i=1}^m e^{\lambda_i} t^{v_i} \in \C[t_1^{\pm 1}, \ldots, t_n^{\pm 1}]$$ 
be the Hori-Vafa mirror.  \footnote{Indeed the K\"ahler structure needs to be complexified to match the moduli spaces.  Simply put, $\lambda_i$ should be replaced by $\lambda_i + \sqrt{-1} \theta_i$ for some fixed $\theta_i \in \R/2\pi \Z$.}  
Write $t_i = z_i \cdot e^{-\langle u,v_i\rangle}$ for $i=1,\ldots,n$.   
Let $L_u$ be a moment-map fiber decorated by a flat $\C^\times$ connection $\nabla_{\uz}$ (where $\uz = (\uz_1,\ldots,\uz_n)$), and $R(\uz)$ the matrix factorization mirror to $(L_u,\nabla_{\uz})$ under the functor in Theorem \ref{thm:intro-F}.  
\begin{enumerate}
\item $R(\uz)$ takes the form $\left(\largewedge^* \underline{\C^n}, d \right)$ where $\underline{\C^n}$ denotes the module $(\C[t_1^{\pm 1}, \ldots, t_n^{\pm 1}])^{\oplus n}$, and
$d = \sum_{k=0}^n d_{-(2\left\lfloor (k+1)/2 \right\rfloor-1)} $
with $d^2 = W-W(\uz)$.
\item $R_0(\uz) := \left(\largewedge^* \underline{\C^n}, d_1 + d_{-1} \right)$ itself is a matrix factorization of $W-W(\uz)$.
\item $d_1 + d_{-1}$ takes the form
$$\left(\sum_{i=1}^n (z_i - \uz_i) e_i \wedge\right) + \left(\sum_{i=1}^n c_i \iota_{e_i}\right) + \left( \sum_{i=n+1}^m c_i \sum_{j=1}^n \alpha^i_j(z,\uz) \iota_{e_j} \right)$$
where $c_i =e^{-(\langle u, v_i \rangle - \lambda_i)}$ and the explicit formula for $\alpha^i_j(z,\uz)$ is given in Theorem \ref{thm:explicit}.
\item Both $R(\uz)$ and $R_0(\uz)$ are split generators of $D\MF(W-W(\uz))$ (which is non-trivial only when $(\underline{t}_i = \uz_i \cdot e^{-\langle u,v_i\rangle})_{i=1}^n$ is a critical point of $W$).
\end{enumerate}
\end{theorem}

We construct an explicit isomorphism between $R$ and $R_0$ for $\dim X \leq 4$.



The above theorem generalizes the results of Chan-Leung \cite{CL} for $\bP^2$ and Tu \cite{Tu} for toric Fano surfaces.  Given a Lagrangian torus fibration, the work of Tu constructed a functor (away from singular fibers) based on Fourier-Mukai transform from the Fukaya category of smooth torus fibers to the category of sheaves of modules (over a certain $A_\infty$-algebra), whose objects can be interpreted as matrix factorizations.  Moreover Abouzaid-Fukaya-Oh-Ohta-Ono announced a proof of homological mirror symmetry conjecture for toric manifolds by showing that the toric fibers generate.  In general the functor is difficult to write down, and this paper provides a method to compute it by localizing to each torus fiber.

As another application, we transform the real locus $\R\bP^n$ in $\C\bP^n$ to a matrix factorization of the mirror, by using the result of \cite{AlAm}.  We only consider $n$ being odd so that $\R\bP^n$ is orientable.  The result is the following.

\begin{theorem}[Theorem \ref{thm:RP}]
Let $W = T^k \left(z_1 + \ldots + z_n + \frac{1}{z_1 \ldots z_n}\right)$ be the Landau-Ginzburg mirror of $\bP^n$ where $n$ is odd.  (The base point in the moment polytope is chosen suitably so that $W$ takes this form.  Moreover the K\"ahler form is taken such that $k \in \Z_{>0}$.  $T$ denotes the Novikov variable.)
Denote the matrix factorization of $W$ mirror to $\R\bP^n \subset \C\bP^n$ by $(E,d)$. 

Then $E$ is given by the trivial bundle with a basis labelled by $[\pm 1 :  \cdots : \pm 1] \in (\Z_2)^{n+1} / \Z_2$, where the quotient is given by the diagonal action of $\Z_2$ on $(\Z_2)^{n+1}$.  The differential $d$ is determined by $d \, p = \sum_q m_{qp} \, q$ where $p,q \in (\Z_2)^{n+1} / \Z_2$, and $m_{qp}$ are given as follows.
\begin{enumerate}
\item When 
$$p=[a_0 : \cdots : a_{i-1} : -1 : a_{i+1} : \cdots : a_n] \textrm{ and } q=[a_0 : \cdots : a_{i-1} : 1 : a_{i+1} : \cdots : a_n]$$ 
where the number of $a_j = -1$ (for $j \not= i$) is even,
\begin{align*}
m_{qp} &= \dfrac{T^{k/2}}{\prod_{1 \leq j \leq n} z_j^{\delta (a_j, -1)}} \textrm{ and }
m_{pq} = \dfrac{T^{k/2}}{\prod_{1 \leq j \leq n} z_j^{\delta (a_j, 1)}}  \textrm{ if } i=0,\\
m_{qp} &=T^{k/2} z_i \textrm{ and } m_{pq} =T^{k/2}   \textrm{ if } i \neq 0.
\end{align*}
$\delta (a,b) = 1$ when $a=b$ and zero otherwise.
\item $m_{qp} = m_{pq} = 0$ otherwise.
\end{enumerate}
\end{theorem}

\section*{Acknowledgement}
The authors are grateful to Jonathan David Evans and Yanki Lekili for informing them about their very useful results on generation of Fukaya categories of Hamiltonian G-manifolds, and in particular toric Fano manifolds.  Combining with the results in this paper, it gives a proof of homological mirror symmetry for toric Fano manifolds.  C.H. Cho and S.-C. Lau thank The Chinese University of Hong Kong for its hospitality, where part of the work was carried out. 
C.H. Cho thanks Yong-Geun Oh for helpful discussions. S.-C. Lau expresses his gratitude to Kwokwai Chan and Junwu Tu for useful explanations of their works.
The work of S.-C. Lau was supported by Harvard University and Boston University.

\section{Localized Lagrangian Floer potential and Lagrangian Floer complex}\label{sec:2}
Let $X$ be a symplectic manifold.  To avoid technical issues of transversality, we make the assumption that $X$ and the Lagrangian submanifolds under consideration are \emph{positive}.  The assumption is not necessary if one is willing to handle the issue of transversality by using more advanced machinery of Lagrangian Floer theory.

\begin{assumption}\label{assum1}
Let $X$ be a symplectic manifold and $\bL$ be a Lagrangian submanifold.  The pair $(X,\bL)$ is said to be {\em positive} if there exists an almost complex structure $J$ such that
\begin{enumerate}
\item any non-constant $J$-holomorphic sphere in $X$ has a positive Chern number;
\item any non-constant $J$-holomorphic disc with boundary on $\bL$ has a positive Maslov index;
\item $J$-holomorphic discs of Maslov index two with boundary on $\bL$ are Fredholm regular.
\end{enumerate}
\end{assumption}
The assumption holds for monotone Lagrangian submanifolds \footnote{A compact Lagrangian submanifold $\bL \subset M$ is called monotone if $I_\omega = \lambda I_\mu$ for some $\lambda \geq 0$, where $I_\omega, I_\mu: \pi_2(M,\bL) \to \R, \Z$ are symplectic area and Maslov index homomorphism respectively.}.  It also holds for Lagrangian torus fibers of a toric Fano manifold.  

Now consider a Lagrangian torus $\bL$ satisfying Assumption \ref{assum1}, and we shall fix the almost complex structure $J$ satisfying the assumption.  We define a Lagrangian Floer potential in this section, using formal deformations brought by flat $\C^\times$ connections on $\bL$.  

Fix a basis $\{E_i\}_{i=1}^n$ of $H_1(\bL,\Z)$, and its dual basis $\{E_i^*\}_{i=1}^n$ of $H^1(\bL,\Z)
\subset H^1(\bL,\C)$.
Consider
$$ b:= x_i E_i^* \in H^1(\bL,\C)/H^1(\bL,\Z)$$
which is interpreted as a flat $\C^\times$ connection on $\bL$ via the associated representation 
\begin{equation}\label{eq:defrho}
\rho^b: \pi_1(\bL) \to \C^\times;  \gamma \mapsto \exp2 \pi \sqrt{-1} (b, \gamma).
\end{equation}
Define the complex coordinates 
\begin{equation}\label{eq:mirvar1}
z_i := \rho^b(E_i) 
\end{equation}
of the moduli space of flat (possibly non-unitary) complex line bundles.  We denote by $\cL_z$ the flat line bundle with the flat connection  $\nabla_z$ parametrized by $z=(z_1,\ldots,z_n)$.

Note that there is a subtle difference between the definition of the variables $z_i$ here and that in the conventional Strominger-Yau-Zaslow (SYZ) approach (see for instance \cite{auroux07} or \cite{FOOO}).
In the SYZ setting, the mirror variables are defined using Lagrangian torus fibration and they parametrize locations and flat $U(1)$ connections of a Lagrangian torus fiber.  In our setting here,  $\bL$ is fixed and the mirror variables parametrize flat $\C^\times$ (instead of $U(1)$) connections.  In Section \ref{sec:toricFano} for toric manifolds we will relate the two by a change of variables.

Since there are infinitely many holomorphic discs in general, the Floer potential is defined over the Novikov field
$$\Lambda = \left\{ \sum_i a_i T^{\lambda_i} \mid a_i \in \C, \lambda_i \in \R,  \lim_{i \to \infty} \lambda_i = \infty \right\}.$$
Here $T$ is a formal parameter, and $\Lambda$ has a natural energy filtration considering only elements with $\lambda_i \geq \lambda_0$ for some $\lambda_0$.
We also set $\Lambda_0 = \{ \sum_i a_i T^{\lambda_i} \in \Lambda \mid \lambda_i \geq 0\}$ which is known as the Novikov ring.
We will sometimes use the notation $\Lambda_\C$ to emphasize that $a_i \in \C$, and we can define $\Lambda_\Z$ in
a similar way.

Let $\CM_1(\bL,J,\beta)$ be the moduli space of stable $J$-holomorphic discs in a homotopy class $\beta \in \pi_2(X,\bL)$.
By using Assumption \ref{assum1}, to define the Floer potential it is enough to consider those $\beta$ with Maslov index two.
For such a $\beta$ (which is of the minimal Maslov index) the moduli space of holomorphic discs is itself compact and does not require compactification by stable discs. Moreover $\dim \CM_1(\bL,J,\beta) = n$. Hence the image of the evaluation map $ev_{\beta}: \CM_1(\bL,J,\beta) \to \bL$ induced on homology is a constant multiple of the fundamental class of $\bL$, and we denote this multiple by $n_\beta(\bL)$.

\begin{definition}\label{def:W}
The Floer potential of $(\bL,\nabla_z)$ is defined as
$$W(\bL)(z_1,\cdots,z_n) := \sum_{\beta, \mu(\beta)=2} n_\beta(\bL) T^{ \omega(\beta)} \rho^b(\partial \beta)$$
where each $\rho^b(\partial \beta)$ can be expressed as a monomial in $z_i^{\pm 1}$ by Equation \eqref{eq:mirvar1}.
\end{definition}
In general the above expression could be an infinite series.  Thus $W$ is a Laurent series in $z_i$'s whose coefficients are Novikov elements.  In case when the sum is finite, we can simply put $T$ to be the constant $e^{-1}$, and so $W$ is a Laurent polynomial with complex coefficients.  For simplicity of notations we will assume that the sum is finite from now on, and write $W \in \Lambda[z_1^{\pm 1}, \ldots, z_n^{\pm 1}]$.  
\begin{remark}\label{rem:lam}
In general there are infinitely many $\beta$ contributing to $W$.  The potential $W$ belongs to $\Lambda \ll z_1^{\pm 1}, \ldots, z_n^{\pm 1} \gg$, which is the completion of the Laurent polynomial ring with respect to the energy filtration of $\Lambda$. 
\end{remark}


For the purpose of the next section, we now recall the Lagrangian Floer complex between two positive Lagrangian submanifolds.
Let $L_0$ and $L_1$ be two oriented, spin, positive Lagrangian submanifolds $L_0$ and $L_1$ in a  symplectic manifold $(X^{2n}, \omega)$. Lagrangian Floer homology $HF(L_0,L_1)$ in this setting was first defined by Oh \cite{Oh1} in the monotone cases and was later generalized by Fukaya-Oh-Ohta-Ono \cite{FOOO}  (see also Biran-Cornea \cite{BC} for the notion of pearl complex when $L_0=L_1$).  The definition below assumes that $L_0$ and $L_1$ intersect transversely.

The Floer complex $CF^*(L_0,L_1)$ is a free $\Z/2$-graded $\Lambda$-module generated by the intersection points $p \in L_0\cap L_1$.  
The Floer differential $\delta$ is defined by counting $J$-holomorphic strips: for $p \in L_0\cap L_1$, we have
\begin{equation}\label{eq:lft1}
\delta( \langle p \rangle ) = \sum_{q \in L_0 \cap L_1} n(p,q) \langle q \rangle,
\end{equation}
where $n(p,q)$ is the signed number of isolated $J$-holomorphic strips $u: \R \times [0,1] \to M$ modulo time translation
weighted by the symplectic area $T^{\omega(u)}$ 
(we refer readers to \cite{FOOO} for the details on signs):
$$ u(\R \times \{1\}) \subset L_0, \quad u(\R \times \{0\}) \subset L_1, \quad  \overline{\partial}_J u =0.$$
Here, $J=\{J_t\}$ is a time-dependent generic compatible almost complex structure, where $J_0$ and $J_1$ satisfy the positivity assumption of $L_0$ and $L_1$ respectively. To define a signed counting,  we need to fix an orientation of the orientation spaces associated to intersection points $p, q$.  We refer the readers to \cite{FOOO} for the detail.

As before, we denote by $n_{\beta}(L_i)$ the number of Maslov index two $J_i$-holomorphic discs with
a boundary on $L_i$ passing through a point $p \in  L_i$. We set $\Phi(L_i) =\sum_{\beta,\mu(\beta)=2} n_{\beta}(L_i) T^{\omega(\beta)}$.
Standard Floer theory argument (Gromov-compactness and gluing theorem)  produces the identity
\begin{equation}\label{eq1:floer}
\delta^2(x) = \big( \Phi(L_1)  - \Phi(L_0) \big)x  \;\; \textrm{for any} \;\; x \in 
CF^*(L_0,L_1).
\end{equation}
If $\Phi(L_1)=\Phi(L_0)$, then $\delta^2=0$ and hence Floer cohomology can be defined.

As in \cite{kontsevich94} (or \cite{C}), we consider a slight generalization by introducing flat (possibly non-unitary) complex line bundles $\mathcal{L}_0 \to L_0$ and $\mathcal{L}_1 \to L_1$.
For $i=0,1$, consider a representation $\rho_i: \pi_1(L_i) \to \C \setminus \{0\}$, and
we take a flat connection $\nabla_i$ for $\CL_i$ whose holonomy representation is given by $\rho_i$.
Note that the complex given below depends on the choice of gauge of $\nabla_i$. 

The Floer complex  $CF^*((L_0,\mathcal{L}_0),(L_1,\mathcal{L}_1))$ is a free $\Z/2$-graded $\Lambda$-module generated by the intersection points $L_0 \cap L_1$. For each $p \in L_0 \cap L_1$, we consider the vector space $\Hom ((\CL_0)_p, (\CL_1)_p)$, which is identified with $\C$ by fixing the isomorphisms $(\CL_0)_p \cong \C, (\CL_1)_p \cong \C$.  Then 
we tensor with the Novikov field $\Lambda$ to obtain the Floer complex.

The differential $\delta^{\mathcal{L}_0, \mathcal{L}_1}$ also takes account of holonomies of the flat connections $\CL_0, \CL_1$.  Given a $J$-holomorphic strip $u$ from $p$ to $q$, by taking the boundary we obtain a path $\partial_i u$ from $p$ to $q$ in $L_i$.
Parallel transport along $\partial_i u$ gives $ \Pal_{\partial_i u}: (\CL_i)_p \to (\CL_i)_q.$
Using the identifications $(\CL_i)_p \cong \C$ fixed before, $\Pal_{\partial_i u}$ is identified with multiplication by a nonzero complex number.
Each strip $u$ from $p$ to $q$ contributes to the differential as 
\begin{equation} \label{eq:diff}
(-1)^{a(u)} \Pal_{\partial_0 u}^{-1} \cdot \Pal_{\partial_1 u} \cdot T^{\omega (u)},
\end{equation}
where $(-1)^{a(u)}$ is the sign of $u$ from \cite{FOOO},
and summing all $u$ defines $n(p,q)$ in the definition of $\delta(\langle p \rangle)$ \eqref{eq:lft1}.

We also define $\Phi(\mathcal{L}_i) = \sum_{\beta,\mu(\beta)=2} n_{\beta}(L_i) \rho_i(\partial \beta) T^{\omega (\beta)}$, which is a Novikov constant.
Then Floer equation \eqref{eq1:floer} becomes 
\begin{equation}\label{eq2:floer}
(\delta^{\mathcal{L}_0, \mathcal{L}_1} )^2 x =  \big( \Phi(\CL_1)  - \Phi(\CL_0) \big)x,  \;\; \textrm{for any} \;\; x \in 
CF((L_0,\mathcal{L}_0),(L_1,\mathcal{L}_1)).
\end{equation}

Later on we shall put $\bL$ (with a flat line bundle $\cL_z$ whose holonomy varies in $z$) in place of $L_0$.  As we have emphasized, the above definition of differential $\delta^{\mathcal{L}_0, \mathcal{L}_1}$ depends on the choice of gauges of $\mathcal{L}_0$ and $\mathcal{L}_1$.  On the other hand $z_i$'s are parametrizing the isomorphism classes of flat connections over $\bL$.  In order to identify the Floer complex as a matrix factorization over $\C[z_1^{\pm 1},\ldots,z_n^{\pm 1}]$, we need to make a specific choice of gauge in each isomorphism class.  In the next section we will introduce the notion of gauge hypertori in order to fix this choice.

\section{Mirror matrix factorization via Floer complex}\label{sec:3}
\subsection{Gauge hypertori}
Fix a Lagrangian torus $\bL$ equipped with a flat line bundle $\cL_z$ and denote its Floer potential by $W(\bL)$.  We want to transform a positive Lagrangian submanifold $L_1 \subset X$ together with a flat connection $\CL_1$ to a matrix factorization $(P^0, P^1,d)$ of $W(\bL)$. This matrix factorization is given by a Floer complex.  $P^0$ (resp. $P^1$) are defined as free $\Lambda \ll z_1^{\pm1}, \cdots, z_n^{\pm 1} \gg$-modules  generated by even (resp. odd) intersection points of $\bL \cap L_1$, and 
\begin{equation}\label{eq:ddegdelta}
d(\cdot) := (-1)^{{\rm deg} (\cdot)} \delta^{\cL_z, \mathcal{L}_1}(\cdot).
\end{equation}
Equation \eqref{eq2:floer} gives
\begin{equation}\label{eqmf1}
d^2  = W(\bL)-\lambda
\end{equation}
where $\lambda \in \Lambda$ is the potential value $\Phi(\CL_1)$ of the Lagrangian $L_1$.

However the differential $d$ defined using Equation \eqref{eq:diff} is not a Laurent polynomial of $z$ for general choice of gauge of the flat connection on $\bL$.  In what follows we fix a uniform choice of gauge for each isomorphism class of flat $\C^\times$ connections on $\bL$, which is parametrized by the mirror variables $z_i$'s, such that $\delta^{\cL_z, \mathcal{L}_1}$ is expressed in Laurent polynomials of $z$.


For each basic vector $E_i^* \in H^1(\bL,\Z)$, we fix an oriented hyper-torus $H_i \subset \bL$ (which means a codimension $1$ submanifold diffeomorphic to $(S^1)^{n-1}$) whose class is Poincar\'{e} dual to $E_i^*$.

\begin{definition}\label{def:hi}
We fix an identification $\bL \cong (\R/ \Z)^n$, and define $H_i$ to be 
$$(\R/\Z)^{i-1} \times \{0\} \times (\R/\Z)^{n-i}$$
for $i=1\cdots, n$.  Fix $p=(p_1,\cdots,p_n) \in (\R/ \Z)^n$.  $p + H_i=p_i+H_i$ for each $i$ is called  a \emph{gauge hyper-torus}.
We orient $H_i$ so that the intersection $E_i \cap H_i$ is positive.
\end{definition}
\begin{remark}
The gauge hypertorus plays a similar role of bounding cochain in the work of Fukaya-Oh-Ohta-Ono \cite{FOOO}.  A standard bounding cochain $b$ have strictly positive exponents in the Novikov variable $T$ in order to have $e^b$ well-defined.
A constant term (corresponding to $T^0$) can be added formally to $b$ using flat non-unitary line bundles (\cite{FOOOT}, \cite{C}). The above gauge
hypertori will be used to define a flat connection below which corresponds to a constant term of $b$.
\end{remark}

We define a flat connection $\nabla$ with a prescribed holonomy $\rho$ which is trivial away from tubular neighborhoods of the gauge hypertori $H_i + p$.  When crossing $H_i + p$ in the positive transverse orientation (i.e. along the direction of $E_i$), the flat connection acts on the fiber of the line bundle by multiplication of $\rho(E_i)$.

\begin{lemma}\label{lem:flat conn}
Given gauge hypertori $\{p + H_i\}_{i=1}^n$ and $\rho:\pi_1(\bL) \to \C \setminus \{0\}$,
there exists a flat connection $\nabla$ for the trivial complex line bundle $\CL_0$ over $\bL$, such that
 $\rho$ is trivial outside any given small neighborhood of the gauge hypertori, and has the associated holonomy representation $\rho$.
\end{lemma}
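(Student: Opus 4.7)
The plan is to realize $\nabla$ as the trivial connection $d$ twisted by a closed, $\C$-valued $1$-form $A$ on $\bL$ that is compactly supported in an arbitrarily thin tubular neighborhood of $\bigcup_i (p + H_i)$ and has the prescribed periods. Since the curvature of a connection on a line bundle has no quadratic term, flatness of $\nabla = d + A$ reduces to $dA = 0$, and the holonomy along a loop $\gamma$ is $\exp\!\left(-\int_\gamma A\right)$ (up to a sign convention). So the task is purely de Rham: build a closed form with prescribed periods, localized near the gauge hypertori.

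First I would pick, for each $i=1,\dots,n$, a complex number $c_i \in \C$ with $\exp(-c_i) = \rho(E_i)$, which is possible because $\rho(E_i) \neq 0$. Next, using the identification $\bL \cong (\R/\Z)^n$ of Definition \ref{def:hi}, I choose a smooth bump function $\phi_i : \R/\Z \to \R_{\geq 0}$ supported in an arbitrarily small interval around $p_i$ with $\int_0^1 \phi_i(t)\,dt = 1$. Set
\[
A \;:=\; \sum_{i=1}^n c_i\, \phi_i(x_i)\,dx_i,
\]
and define $\nabla := d + A$ on the trivial bundle $\CL_0 = \bL \times \C$.

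The verification is then immediate. Each summand $c_i\phi_i(x_i)\,dx_i$ depends only on $x_i$, so its exterior derivative vanishes; hence $dA = 0$ and $\nabla$ is flat. The support of $A$ is contained in $\bigcup_i \{|x_i - p_i| < \varepsilon\}$, which is a tubular neighborhood of $\bigcup_i(p+H_i)$ of size controlled by the support of $\phi_i$; shrinking $\phi_i$ fits the support inside any preassigned neighborhood. Evaluating on the basic loops $E_j$ (oriented so that $E_j \cdot H_j = +1$, consistent with Definition \ref{def:hi}) gives
\[
\int_{E_j} A \;=\; c_j \int_0^1 \phi_j(t)\,dt \;=\; c_j,
\]
so the holonomy of $\nabla$ along $E_j$ is $\exp(-c_j) = \rho(E_j)$. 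Since $\pi_1(\bL) \cong \Z^n$ is abelian and generated by the $E_j$, this agrees with $\rho$ on all of $\pi_1(\bL)$.

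There is essentially no serious obstacle here; the only point that deserves a moment's care is allowing $\rho$ to be non-unitary, which is handled by letting $c_i$ range over $\C$ rather than $i\R$, and ensuring that "trivial outside a neighborhood" is interpreted as $A \equiv 0$ there, which is built into the construction via the compact support of the bump functions $\phi_i$.
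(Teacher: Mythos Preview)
Your proof is correct and follows essentially the same approach as the paper: both construct the connection as $d$ plus a closed $1$-form built from bump functions supported near the hypertori, with coefficients chosen to match the prescribed holonomy. The paper writes out only the one-dimensional model $\nabla = d - 2\pi\bi b_i\,\delta(t-p_i)\,dt$ on $\R$ and leaves the torus case as an exercise, whereas you carry out the full construction on $(\R/\Z)^n$ directly; otherwise the arguments are identical.
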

\begin{proof}
For simplicity, we consider the case of  the trivial bundle  $\R \times \C$ over $\R$
and we leave the case of $\bL$ as an exercise.
Let $p_i \in \R$, and we define its connection $\nabla$ to be trivial ($\nabla =d$) outside $(p_i  - \epsilon, p_i + \epsilon)$,
and in the interval $(p_i  - \epsilon, p_i + \epsilon)$, $\nabla$ is defined to be 
$$d - 2\pi\bi b_i \delta(t- p_i) dt$$
where $\delta (t)$ is a smooth function whose support is in the interval $( - \epsilon, + \epsilon)$ with
$\int_{-\epsilon}^{+\epsilon} \delta (t) dt = 1.$
$\nabla$ defines the desired flat connection over $\R$.
\end{proof}

The parallel transport of the above chosen connection along a path $\gamma$ in $\bL$ can be expressed in terms of intersection number between $H_i + p$ and $\gamma$.
Namely, for a path $\gamma$ whose endpoints are away from a small neighborhood of $H_i + p$ and transversal to $H_i + p$, the parallel transport  along $\gamma$ is given by 
$$\prod_{i=1}^n \rho (E_i)^{k_i},$$
when $k_i$ is the signed intersection number between $\gamma$ and $H_i + p$.


Now, we express the Floer differential $\delta^{\mathcal{L}_z, \mathcal{L}_1}$ in terms of $z$.  Let $\rho^b: \pi_1(\bL) \to \C \setminus \{0\}$ (see \eqref{eq:defrho}) be the holonomy presentation with $\rho^b(E_i) = z_i$.  Then we fix the flat connection $\nabla_z$ as in Lemma \ref{lem:flat conn}.  Denote by $U_H \subset \bL$ a small neighborhood of $\cup_i (p_i+ H_i)$ so that the parallel transport for $\nabla_z$ is trivial outside $U_H$.  Given another positive Lagrangian submanifold $L_1$ which intersects $\bL$ transversely, we may assume that each point in $\bL \cap L_1$ does not lie in $U_H$ by shrinking the open neighborhood and changing the base point $p$ of the gauge hypertori.

Given two intersection points $p,q \in \bL \cap L_1$,
consider  a $J$-holomorphic strip $u: \R \times [0,1] \to M$ contributing to the differential 
$ \delta^{\mathcal{L}_z, \mathcal{L}_1} $.
Consider the boundary path $\partial_i u$ from $p$ to $q$ in $\bL$ for $i=0$ and $L_1$ for $i=1$.
Recall that the contribution of $u$ to the differential $\delta^{\cL_z, \mathcal{L}_1}$
was given by  $(-1)^{a(u)} \Pal_{\partial_0 u}^{-1} \Pal_{\partial_1 u} \, T^{\omega(u)}$. Here, $\Pal_{\partial_1 u}$ is a complex number.

We claim that  the holonomy factor $\Pal_{\partial_0 u}^{-1}$ along the path $\partial_0u$ is indeed  a monomial in $\Lambda [z_1^\pm,\cdots, z_n^\pm]$. Since $\partial_0u$ is a path starting and
ending away from $U_H$, the  holonomy factor $\Pal_{\partial_0 u}^{-1}$ is given by
$$\prod_{i=1}^n z_i^{-k_i},$$
where $k_i$ is the signed intersection number between $\partial_0 u$ and the hyper-torus $p_i+H_i$.
Hence,  $ \delta^{\cL_z, \mathcal{L}_1}$ from $p$ to $q$ gives an element of $\Lambda [z_1^\pm,\cdots, z_n^\pm]$
(or $\Lambda \ll z_1^{\pm 1}, \ldots, z_n^{\pm 1} \gg$ for an infinite sum)
taking a  sum of all such contributions from isolated $J$-holomorphic strips from $p$ to $q$.

  Intuitively, we are recording how many times the $\bL$-edge of a $J$-holomorphic strip
crosses the gauge hypertori in $\bL$ in terms of $(z_1,\cdots, z_n)$ variables  (see Figure \ref{fig:fourier}).

\begin{figure}[htb!]
\begin{center}
\includegraphics[height=2in]{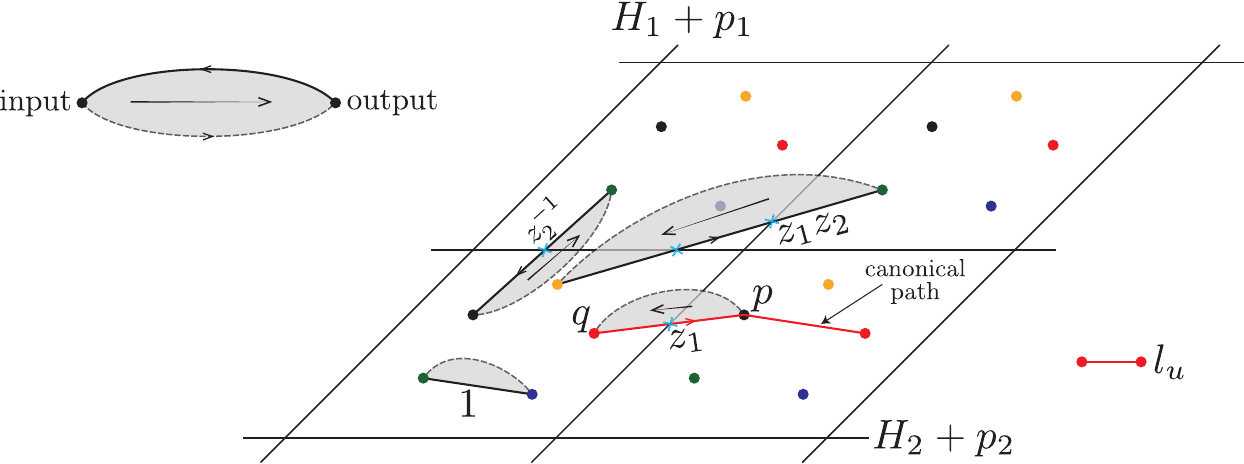}
\caption{$J$-holomorphic strips and corresponding monomials.  The diagram shows the universal cover of the torus $\bL$.  The dots represent intersection points of $\bL$ and $L_1$.}\label{fig:fourier}
\end{center}
\end{figure}
\begin{remark}
Here is another interpretation of the above construction.
Note that by removing the gauge hypertori from $\bL$, we get a simply connected region $U$. Hence
there is a unique path (up to homotopy) for any pair of intersection points of $\bL$ and $L_1$ contained in this simply connected region. Then the path $\partial_0u$ from $p$ to $q$ coming from
a $J$-holomorphic strip $u$ can be concatenated with this unique path (up to homotopy) from $q$ to $p$ to obtain a loop $l_u$ (drawn as the red line in Figure \ref{fig:fourier}), starting and ending at $p$ . In this way, our construction can be regarded as a Fourier transform, in which the counting of $J$-holomorphic strips from $p$ to $q$ whose boundaries correspond to loops (up to homotopy) in $\pi_1(\bL) \cong \Z^n$ transforms into a (Laurent) polynomial in $\Lambda [z_1^\pm,\cdots, z_n^\pm]$.
\end{remark}

Let $P^0$ (resp. $P^1$) be the free $\Lambda [z_1^\pm,\cdots, z_n^\pm]$-module generated by
even (resp. odd) intersection points of $\bL \cap L_1$. 
Here, we are using the canonical $\Z/2$-grading of the Lagrangian Floer complex. The degree of $p \in CF(\bL , L_1)$ is defined using a loop in the Lagrangian Grassmannian of $T_pX$ constructed in the following way.
We choose any path from the oriented Lagrangian subspace $T_pL_1$ to the oriented Lagrangian subspace $T_p\bL$ in the oriented Lagrangian Grassmannian of $T_pM$.  We compose this path
with a canonical path (see \cite[Section 3]{Alston}) from $T_p\bL$ to $T_pL_1$ (without considering orientation).  Hence we obtain a loop in the Lagrangian Grassmannian of $T_pM$, starting and ending at $T_pL_1$.  The winding number of this loop gives the canonical $\Z/2$-grading, since the different choices of oriented path from $T_pL_1$ to $T_p\bL$  change the winding number of this loop by $2\Z$.  $\delta^{\cL_z, \mathcal{L}_1} $ is an odd map with respect to this $\Z/2$-grading.

$\delta^{\cL_z, \mathcal{L}_1} $ can be linearly extended to
$\Lambda [z_1^\pm,\cdots, z_n^\pm]$-module
homomorphisms $P^0 \to P^1$ and $P^1 \to P^0$, which is still denoted as $ \delta^{\cL_z, \mathcal{L}_1} $.
We define $d_0:P^0 \to P^1$ by $ \delta^{\cL_z, \mathcal{L}_1} $,
and $d_1:P^1 \to P^0$ by $- \delta^{\cL_z, \mathcal{L}_1} $.
Then Floer's equation \eqref{eq2:floer} can be rewritten as a matrix factorization identity:
$$d_0 \circ d_1 = d_1 \circ d_0 = (W(\bL) - \Phi(\mathcal{L}_1) ) \cdot Id.$$

\begin{definition} \label{def:mirMF}
The matrix factorization of $W(\bL)$ mirror to a Lagrangian brane $(L_1,\CL_1)$ is defined as $(P^0, P^1, d)$ given by the above construction.
\end{definition}

The above construction depends on the choice of the base point $p$ of the gauge hypertori. We will show in Lemma \ref{lem:gimf} that different choices of gauge hypertori and base point give rise to matrix factorizations in the same isomorphism class.

\subsection{Generalizations}
In this subsection, we briefly discuss how the construction goes in more general situations (without Assumption \ref{assum1}) using the machinery of \cite{FOOO}.

First recall the definition of weakly unobstructed Lagrangian submanifold.
Let $\bL$ be a Lagrangian torus in a general symplectic manifold. Denote by $(C(\bL), \{m_k\})$ a unital filtered $\AI$-algebra of $\bL$ constructed in \cite{FOOO}
or in \cite{FOOOT} (which can be made unital by taking a canonical model).
Recall that we have 
\begin{equation}\label{eq:mkf}
m_k = \sum_{\beta \in H_2(X,\bL)} m_{k,\beta} \otimes T^{\omega(\beta)}
\end{equation}

Denote by $F^+C(\bL)$ the elements of $C(\bL)$ whose coefficients have positive $T$-exponents.
 An element $b_+ \in F^+C(\bL)$ is called a {\em weak bounding cochain} if
  $m(e^b) = \sum_{k=0}^\infty m_k(b,\cdots,b)$ is a multiple of a unit.
  

Choose a flat connection $\nabla^{b_0}$ of a complex line bundle $\mathcal{L}$ over $\bL$  whose holonomy is $\rho^{b_0}$ for $b_0 = \sum x_i E_i^*$ as  in Section \ref{sec:2}.
We can modify \eqref{eq:mkf} to define
 \begin{equation}\label{eq:mkfr}
 m_k^{\rho} = \sum_{\beta \in H_2(X,\bL)} \rho^b (\partial \beta)m_{k,\beta} \otimes T^{\omega(\beta)}.
 \end{equation}
As explained in \cite{FOOOT}, it has the effect of adding a constant term $b_0$ to $b_+$.
 
We need to make the following assumption on $\bL$ in this general setting.
\begin{assumption}\label{assum2}
We require that there exists $b_+ \in F^+C(\bL)$ such that $b = b_+ + b_0$ is a weak bounding cochain for every $b_0 \in H^1(\bL,\C)/H^1(\bL,\Z)$.
\end{assumption}
As we require the existence of a family of weak bounding cochains, this is stronger than the standard weakly unobstructed condition on $b_+$.
 
It was shown in \cite[Section 4]{FOOOT} that a Lagrangian torus fiber in a compact toric manifold satisfies this assumption (with $b_+ = 0$).  Hence a Floer potential for a torus fiber of any toric manifold can be defined.  In this case, the Floer equation \eqref{eq2:floer} was shown in Lemma 12.7 of \cite{FOOOT}.
 
With Assumption \ref{assum2} on $\bL$, the previous construction of mirror matrix factorization generalizes as follows.
Let $L_1$ be  a weakly unobstructed Lagrangian submanifold of $X$ with a weak bounding cochain $b'$ ($L_1$ does not have to satisfy the stronger assumption
\ref{assum2}).
In addition, we assume that  $L_1$ intersects transversely with $\bL$ (by using Hamiltonian isotopy of $X$ if necessary)
and the intersection is away from the neighborhood of the chosen hyper-tori. 

Denote by $b_z = b_0 + b_+$ to emphasize the dependence on $z$.
Then the Lagrangian Floer complex between $(\bL, b_z)$ and $(L_1,b')$ satisfies the Floer equation \eqref{eq2:floer} (\cite{FOOO}), and hence it is easy to see that we can find the mirror matrix factorization of $(L_1, b')$ in the same way.

On the other hand in actual computations, we find that the pearl complex given by \cite{BC} behaves better, since generators of the complex are given by the critical points
of a Morse function which can be chosen away from the hyper-tori.  We will explain them in Section \ref{sec:pearl}.

\section{Examples}
In this section, we explain the Floer potentials and mirror matrix factorizations through a couple of monotone examples.  We will perform the construction systematically for toric fibers of toric manifolds in Section \ref{sec:toricFano}.

\subsection{$\bP^1$}
Consider $\bP^1$ with total symplectic area $k$.
Take $\bL$ to be an oriented great circle $S^1$ in $\bP^1$.  Fix a point $h \in \bL$, and a flat connection $\nabla_z$ (on a complex line bundle $\cL_z$) which is trivial away from a small neighborhood $U_{h} \subset \bL$ of $h$ and has holonomy $z \in \C^\times$ along $\bL$.

There are two holomorphic discs bounded by $\bL$, namely the upper and lower hemispheres.  The Floer potential equals to
$$W(\bL) =  T^{k/2}( z + z^{-1}).$$
(It is equivalent to the well-known Hori-Vafa mirror $z' + T^{k}/z'$ by the change of coordinate $z' = T^{k/2}z$.)

Take $L_1$ to be another great circle  intersecting transversely with $\bL$ at the two antipodal points $p, q \in \bL\cap L_1$.  It is assumed that $h$ and $U_{h}$ are taken such that $p, q \not\in U_{h}$. Equip $L_1$ with a trivial flat line bundle $\mathcal{L}_1$ with holonomy  $\lambda \in \C^\times$.
(It is known from \cite{FOOOT} that the object $(L_1,\mathcal{L}_1)$ is non-trivial in the Fukaya category if and only if $\lambda = \pm 1$.) 

We also fix the gauge of $\mathcal{L}_1$ by fixing a point $h_1 \in L_1$ and requiring that the flat connection on $\mathcal{L}_1$ is trivial away from a small neighborhood $U_{h_1} \subset L_1$ of $h_1$.  (Again $h_1,U_{h_1}$ are chosen such that $p,q \not\in U_{h_1}$.) Orient $\bL, L_1$ as in Figure \ref{fig:excp11}.  Then $p$ and $q$ have even and odd degree respectively.  The differential is given as
$$ \delta^{\cL_z, \mathcal{L}_1} (p) = T^{k_1}\left( z -\frac{1}{\lambda}\right)q,\quad  \delta^{\cL_z, \mathcal{L}_1} (q)= T^{k_2}\left(-1 + \frac{\lambda}{z}  \right)p$$
%
Hence, 
$$ (\delta^{\cL_z, \mathcal{L}_1})^2 = T^{k/2} \left( \lambda + \frac{1}{\lambda} \right) - T^{k/2} \left( z + \frac{1}{z} \right)$$
If we change the location of $h$ or $h'$, we get a different but isomorphic matrix factorization.  This can be checked directly, and indeed it is a general fact (Lemma \ref{lem:gimf}).

\begin{figure}[htb!]
\begin{center}
\includegraphics[height=1.5in]{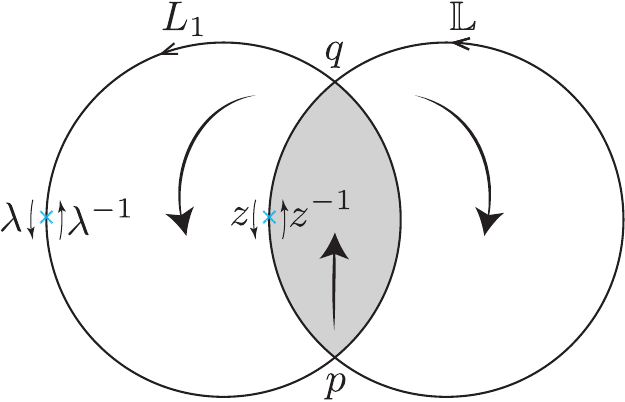}
\caption{$\bL$ and $L_1$ in $\C P^1 \setminus \{ \infty \}$}\label{fig:excp11}
\end{center}
\end{figure}

\subsection{$\mathbb{P}^1$ with another Lagrangian}
$L_1$ in the last subsection is obtained from the great circle $\bL$ by rotation of $\bP^1$, which is a Hamiltonian isotopy.  Let us take another Hamiltonian isotope $L_2$ of $\bL$ as shown in Figure \ref{fig:cp1pert2ex2}.  In particular the areas $\alpha, \beta, \gamma, \delta$ in Figure \ref{fig:cp1pert2ex2} satisfy the relation $\alpha + \beta = \gamma + \delta$.  For simplicity equip $L_2$ with the trivial holonomy.    

\begin{figure}[htb!]
\begin{center}
\includegraphics[height=2.3in]{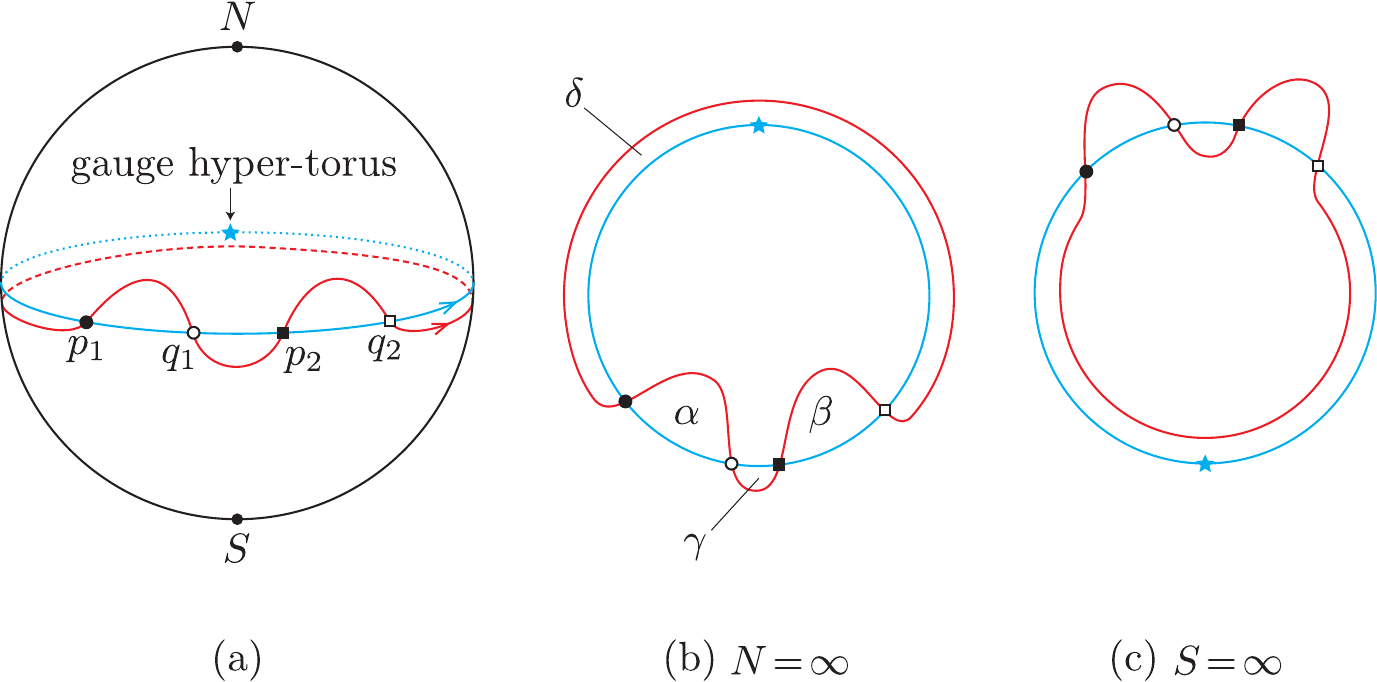}
\caption{$L_2 \subset \mathbb{P}^1$}\label{fig:cp1pert2ex2}
\end{center}
\end{figure}

By Definition \ref{def:mirMF} and counting holomorphic strips, it is easy to obtain the matrix factorization mirror to $L_2$:
\begin{equation}\label{eqn:L2isotL1P1}
\bordermatrix{ & p_1 & p_2 \cr
q_1 & T^{\frac{k}{2} - \alpha}(z -1) & T^{\frac{k}{2}-\gamma}(1-\frac{1}{z}) \cr
q_2 & T^{\frac{k}{2} - \alpha - \beta + \gamma}(z -1) & T^{\frac{k}{2} - \beta}(z-1) \cr }, \quad 
\bordermatrix{ & q_1 & q_2 \cr
p_1 & T^\alpha   & -T^{\alpha + \beta - \gamma} \frac{1}{z} \cr
p_2 & - T^\gamma & T^\beta \cr }.
\end{equation}
Since $L_2$ is Hamiltonian isotopic to $L_1$ in the previous section, one naturally expect that \eqref{eqn:L2isotL1P1} would give an equivalent object in the mirror of $\mathbb{P}^1$. We will prove this fact in more general setting in Section \ref{sec:haminv}.

\subsection{ $\mathbb{P}^1 \times \mathbb{P}^1$}\label{subsec:cp1cp1}
Consider the symplectic manifold $\mathbb{P}^1 \times \mathbb{P}^1$ whose moment map image is $[-\frac{k}{2\pi},\frac{k}{2\pi}]^2$.
We compute matrix factorizations for two specific Lagrangian submanifolds, namely the toric fiber at $0 \in [-\frac{k}{2\pi},\frac{k}{2\pi}]^2$ and the
anti-diagonal $A$. 

Note that toric fibers split-generates the Fukaya category of $\mathbb{P}^1 \times \mathbb{P}^1$ by the result of \cite{AFOOO}. In \ref{sec:qe}, we will see that their mirror matrix factorizations are also split-generators.  
The anti-diagonal $A$ is another interesting object in the Fukaya category, which  appears (as a Lagrangian) only when two $\mathbb{P}^1$-factors have the same symplectic forms. Similar phenomenon happens on the mirror side explained in \cite[8.2]{KL}.
$A \oplus A[1]$ is isomorphic to the sum of two toric fibers with holonomies $(\pm1, \mp1)$ (which is conjectured in \cite{CL} and proven in \cite{CHLee}).

Let $S_0$ and $S_1$ be two distinct oriented great circles of $\mathbb{P}^1$ which intersect transversely with each other at two antipodal points.
Denote the two intersection points of $S_0$ and $S_1$ by $p$ (which has odd-degree) and $q$ (which has even degree).  Set $\bL = S_0 \times S_1$, which is Hamiltonian isotopic to the toric fiber at $0 \in [-\frac{k}{2\pi},\frac{k}{2\pi}]^2$.  Let $\cL_z \to \bL$ be the flat line bundle with holonomy $(z_1, z_2)$.

$\bL$ is monotone, and its Floer potential is given by
\begin{equation}\label{lfpcpcp}
W(\bL) (z_1,z_2) = T^{k/2}\left( z_1 + \frac{1}{z_1} + z_2 + \frac{1}{z_2} \right).
\end{equation}

Let $L_1 = S_1 \times S_0$ and it is equipped with the flat line bundle $\CL_1$ with holonomy $(\lambda_1, \lambda_2)$.
Note that $L_1$ is Hamiltonian isotopic to $\bL$ and hence the toric fiber at $0 \in [-\frac{k}{2\pi},\frac{k}{2\pi}]^2$.

The matrix factorization corresponding to $L_1$ can be found by using the previous calculation for $\mathbb{P}^1$ as follows.
The intersection $\bL \cap L_1$ consists of 4 points, namely two even intersections $(p,p),(q,q)$ and two odd intersections $(p,q), (q,p)$.
The differential $\delta^{\cL_z, \mathcal{L}_1}$ is represented by the matrices
\begin{equation}\label{MFcpcpf}
A:=\bordermatrix{ & (p,p) & (q,q) \cr
(p,q) & T^{k_1}(\lambda_2 z_2 -1) & T^{k_2}(-\frac{1}{\lambda_1}
+ \frac{1}{z_1}) \cr
(q,p) &T^{k_1}(\lambda_1 z_1 -1) & -T^{k_2}(-\frac{1}{\lambda_2}
+ \frac{1}{z_2}) \cr }, 
B:=\bordermatrix{ & (p,q) & (q,p) \cr
(p,p) & T^{k_2}(-\frac{1}{\lambda_2}
+ \frac{1}{z_2})    & T^{k_2}(-\frac{1}{\lambda_1}
+ \frac{1}{z_1}) \cr
(q,q) &T^{k_1}(\lambda_1 z_1 -1) & - T^{k_1}(\lambda_2 z_2 -1)\cr }
\end{equation}
with $k_1+k_2 = k/2$.  The additional signs at (2,2) positions of the matrices come from Koszul sign convention.
One can check that this gives a matrix factorization of \eqref{lfpcpcp}. (Compare it with the one given in \cite[Remark 4.3]{CHLee}.)
Hence we obtain
\begin{prop}
The matrix factorization mirror to $L_1$ 
is given by rank 2 free modules $P_0, P_1$ with  $d_0 = A   , d_1 =- B$, which satisfies
$$d_0 d_1 = d_1 d_0 = (W(\bL)(z_1,z_2) - W(\bL)(\lambda_1, \lambda_2)) \, \textrm{Id}_{2\times 2}.$$
\end{prop}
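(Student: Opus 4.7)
The plan is to exploit the product structure of both $\bL = S_0 \times S_1$ and $L_1 = S_1 \times S_0$. With a product almost complex structure $J = J_1 \times J_2$, any $J$-holomorphic strip in $\mathbb{P}^1 \times \mathbb{P}^1$ with boundary on $\bL$ and $L_1$ splits as a pair $(u_1, u_2)$ of $J_i$-holomorphic strips in the two $\mathbb{P}^1$-factors. After choosing product gauge hypertori for $\cL_z$ and $\CL_1$ (so that each flat connection is a product of two flat connections on the respective great circles), the Floer complex $CF^*((\bL, \cL_z), (L_1, \CL_1))$ is canonically identified, as a $\Z/2$-graded $\Lambda$-module with differential, with the tensor product of the two $\mathbb{P}^1$ Floer complexes from the previous subsection (one with holonomies $(z_1, \lambda_1)$ on $(S_0, S_1)$, one with $(z_2, \lambda_2)$ on $(S_1, S_0)$). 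Under this identification the matrix factorization mirror to $L_1$ is the tensor product of the two rank-one $\mathbb{P}^1$ matrix factorizations, which is automatically a matrix factorization of the sum of the two $\mathbb{P}^1$ potentials, and that sum is exactly $W(\bL)(z_1,z_2) = T^{k/2}(z_1 + z_1^{-1} + z_2 + z_2^{-1})$.

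Concretely I would carry out the following steps. First, verify from the construction of the canonical $\Z/2$-grading that $(p,p), (q,q)$ are even and $(p,q), (q,p)$ are odd. Next, observe that because a nonconstant strip in either $\mathbb{P}^1$-factor already has Maslov index one, any rigid holomorphic strip contributing to $\delta^{\cL_z, \CL_1}$ must be constant in exactly one factor; this leaves eight nonzero contributions that read off directly from the $\mathbb{P}^1$ case. Inserting them into the entries of $A$ and $B$, with exponents $k_1, k_2$ being the symplectic areas of the two halves of the contributing $\mathbb{P}^1$ and satisfying $k_1 + k_2 = k/2$, yields the matrices in \eqref{MFcpcpf}. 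Finally, the matrix factorization identity $d_0 d_1 = d_1 d_0 = (W(\bL)(z_1,z_2) - W(\bL)(\lambda_1,\lambda_2))\,\Id$ can be checked either by direct $2 \times 2$ multiplication or, more structurally, by invoking the general fact that the tensor product of matrix factorizations of $W_1$ and $W_2$ is a matrix factorization of $W_1 + W_2$.

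The hard part will be pinning down the signs. The $\Z/2$-graded tensor-product differential takes the Koszul form $\delta_1 \otimes \one + (-1)^{|\cdot|}\,\one \otimes \delta_2$, and combining this with the extra $(-1)^{\deg(\cdot)}$ in the definition $d = (-1)^{\deg(\cdot)} \delta^{\cL_z, \CL_1}$ of \eqref{eq:ddegdelta} is what produces the additional minus signs appearing at the $(2,2)$-entries of both $A$ and $B$. Bookkeeping these Koszul signs together with the orientation signs $(-1)^{a(u)}$ from \cite{FOOO} is the only non-routine point; the remainder of the argument is a direct reduction to the $\mathbb{P}^1$ computation already carried out.
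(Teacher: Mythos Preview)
Your proposal is correct and follows essentially the same approach as the paper: the paper likewise reduces to the $\mathbb{P}^1$ computation via the product structure of $\bL = S_0 \times S_1$ and $L_1 = S_1 \times S_0$, writes down the matrices $A,B$ from the eight strip contributions, notes that the extra signs at the $(2,2)$ entries come from the Koszul sign convention, and leaves the final identity as a direct check. Your account is simply more explicit about the tensor-product-of-matrix-factorizations interpretation and the Koszul bookkeeping than the paper's brief treatment.
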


In the following we compute the matrix factorization mirror to the anti-diagonal which was first studied in \cite{CHLee}.  The anti-diagonal $A$ is defined by
$$A:=\left\{ ([z:w],[\bar{z}:\bar{w}]) \,\, | \, \, [z:w] \in \mathbb{P}^1 \right\} \subset \mathbb{P}^1 \times \mathbb{P}^1.$$
We remark that $\Phi(A)= 0$ since the minimal Maslov index for $A$ is 4. We fix $\mathcal{L}_A$ to be
the flat line bundle on $A$ with trivial holonomy.
The intersection $\bL \cap A$ consists of 2 points $(p,p), (q,q)$,
which generate $CF((\bL,\cL_z),(A,\mathcal{L}_A))$.

Recall the following doubling argument from \cite[Proposition 4.1]{CHLee}.
\begin{lemma}\label{lem:double}
There is an one-to-one correspondence between
\begin{center}
$\{$holomorphic strips in $\mathbb{P}^1 \times \mathbb{P}^1$ bounded by $\bL$ and $A$$\}$ and\\
$\{$holomorphic strips in $\mathbb{P}^1$ bounded by $S_0$ and $S_1$$\}$.
 \end{center}
 Moreover, the correspondence preserves symplectic areas.
\end{lemma}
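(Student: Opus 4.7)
The plan is to exhibit an explicit doubling construction driven by the anti-holomorphic involution $\sigma: \mathbb{P}^1 \to \mathbb{P}^1$ given in homogeneous coordinates by $[z:w] \mapsto [\bar z : \bar w]$. Its associated involution $\tau(x,y) = (\sigma(y),\sigma(x))$ on $\mathbb{P}^1 \times \mathbb{P}^1$ is anti-holomorphic with respect to the product structure and has fixed locus exactly $A$. Without loss of generality I choose the two great circles $S_0, S_1$ to sit inside the real locus of $\sigma$, so that $\sigma(S_i) = S_i$ and hence $\tau(\bL) = \bL$ for $\bL = S_0 \times S_1$.

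For the forward map, let $u = (u_1, u_2): \mathbb{R} \times [0,1] \to \mathbb{P}^1 \times \mathbb{P}^1$ be a $J$-holomorphic strip with $u(\mathbb{R} \times \{1\}) \subset \bL$ and $u(\mathbb{R} \times \{0\}) \subset A$. Define $v: \mathbb{R} \times [-1,1] \to \mathbb{P}^1$ by $v(s,t) = u_1(s,t)$ for $t \geq 0$ and $v(s,t) = \sigma(u_2(s,-t))$ for $t \leq 0$. The $A$-boundary condition is exactly $u_2(s,0) = \sigma(u_1(s,0))$, so $v$ is continuous across $t=0$. On the lower half $v$ is the composition of the anti-holomorphic $\sigma$, the holomorphic $u_2$, and the anti-holomorphic reparametrization $t \mapsto -t$, hence is holomorphic; Schwarz reflection then upgrades continuity at $t=0$ to genuine smoothness. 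The slice $t=1$ lands in $S_0$, while $t=-1$ lands in $\sigma(S_1) = S_1$, so after a harmless reparametrization $v$ is a holomorphic strip in $\mathbb{P}^1$ bounded by $S_0$ and $S_1$.

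The inverse splits a strip $v$ down the middle and twists: set $u(s,t) = (v(s,t), \sigma(v(s,-t)))$ for $t \in [0,1]$, which lands in $A$ at $t=0$ and in $\bL$ at $t=1$ by the same reasoning, and which is holomorphic by the identical computation. For areas, write the symplectic form on $\mathbb{P}^1 \times \mathbb{P}^1$ as $\pi_1^* \omega + \pi_2^* \omega$; since $\sigma^* \omega = -\omega$ and $t \mapsto -t$ reverses orientation of the strip, the two sign flips cancel, yielding $\int u^* (\pi_1^* \omega + \pi_2^* \omega) = \int_{\mathbb{R} \times [-1,1]} v^* \omega$, which is the symplectic area of $v$.

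The one technical point requiring care is verifying that the piecewise-defined $v$ is genuinely $J$-holomorphic across the seam $t=0$, not merely continuous; this is the standard Schwarz reflection principle for $J$-holomorphic curves with totally real boundary and relies crucially on the fact that $A$ is the real locus of an honest anti-holomorphic involution on the ambient manifold with its integrable complex structure. Everything else---the matching of boundary conditions, the mutual inverse property, and the area preservation---is a direct consequence of the construction.
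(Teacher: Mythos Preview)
Your proof is correct and follows essentially the same doubling argument as the paper, which merely sketches that the two components $u_1,u_2$ are glued along the anti-diagonal boundary and refers to \cite{CHLee} for the details you have supplied. One harmless slip: $\tau(\bL)=\tau(S_0\times S_1)=\sigma(S_1)\times\sigma(S_0)=S_1\times S_0$, not $\bL$; but you never actually use $\tau(\bL)=\bL$, only the set-wise invariance $\sigma(S_i)=S_i$, so the argument stands as written.
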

\begin{proof}
Let $u=(u_1, u_2) : \R \times [0,1] \to \mathbb{P}^1 \times \mathbb{P}^1$ be a holomorphic strip between $S_0 \times S_1$ and $A$.  $u_1$ and $u_2$ can be glued as in (a) of Figure \ref{fig:cp1cp1} to give a holomorphic strip between $S_0$ and $S_1$. For more details, see \cite{CHLee}.


\end{proof}
Therefore the differential from $(p,p)$ to $(q,q)$
is $T^{k_1} (1+z_1 z_2)$, and the differential from $(q,q)$ to $(p,p)$ is $T^{k_2} (\frac{1}{z_1} + \frac{1}{z_2})$ (see (b) of Figure \ref{fig:cp1cp1}).
Hence we obtain 
\begin{figure}[htb!]
\begin{center}
\includegraphics[height=2in]{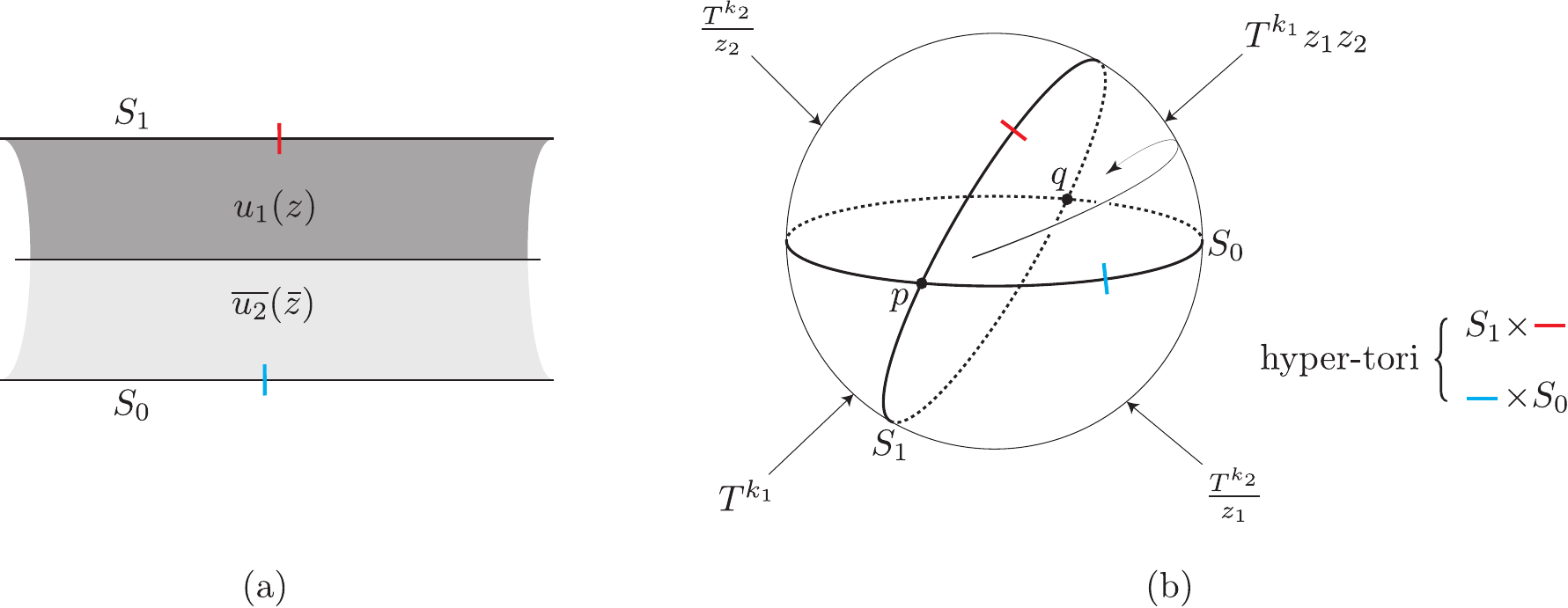}
\caption{Gluing argument for $(S_1 \times S_0, A)$}\label{fig:cp1cp1}
\end{center}
\end{figure}
\begin{prop}
The anti-diagonal $A$ is mirror to the matrix factorization 
$$ W(\bL) =\left(T^{k_1}(1 + z_1 z_2 )\right) \,\cdot \,\left(T^{k_2}\left(\frac{1}{z_1} + \frac{1}{z_2} \right)\right).$$
\end{prop}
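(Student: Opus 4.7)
The plan is to combine the doubling argument of Lemma \ref{lem:double} with a careful bookkeeping of holonomy against the chosen gauge hypertori. Since $\Phi(A) = 0$ (the minimal Maslov index of $A$ in $\bP^1 \times \bP^1$ is $4$), the matrix factorization identity we must establish is
$$d_0 \, d_1 = d_1 \, d_0 = W(\bL)(z_1, z_2) \cdot \mathrm{Id},$$
with $d_0, d_1$ the two pieces in the proposed factorization.

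First I would identify the Floer complex $CF((\bL, \cL_z), (A, \CL_A))$. The intersection $\bL \cap A = (S_0\times S_1)\cap A$ consists of exactly the two points $(p,p)$ and $(q,q)$, corresponding to the intersections $S_0\cap S_1 = \{p,q\}$; their parities agree with the $\bP^1$ computation in the previous subsection, so $(p,p)$ is even and $(q,q)$ is odd (or vice versa). Hence the complex is rank one in each degree, and the matrices of the two differentials are $1 \times 1$, namely Laurent polynomials in $z_1, z_2$ with Novikov coefficients.

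Next I would enumerate the strips contributing to $\delta^{\cL_z, \CL_A}$ using Lemma \ref{lem:double}. The correspondence sends each strip $(u_1,u_2)$ in $\bP^1\times\bP^1$ between $\bL$ and $A$ to a single strip $u$ in $\bP^1$ between $S_0$ and $S_1$; in $\bP^1$ there are exactly four strips (two from $p$ to $q$ through each hemisphere, and two from $q$ to $p$). For a strip from $(p,p)$ to $(q,q)$, the $\bL$-boundary decomposes into a path in $S_0$ (the $u_1$-edge) and a path in $S_2$ (a reflection of the $u_2$-edge along $A$), whose total symplectic area equals that of the doubled strip in $\bP^1$, namely $k_1$. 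I then track, for each of the four strip types, the signed intersection of these two boundary paths with the gauge hypertori on $S_0$ and on $S_1$: the two strips from $(p,p)$ to $(q,q)$ contribute holonomies $1$ and $z_1 z_2$ respectively (one going through each pair of hemispheres), while the two strips from $(q,q)$ to $(p,p)$ contribute $z_1^{-1}$ and $z_2^{-1}$. Summing up (with signs) gives the two differentials in the statement.

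The final step is the algebraic verification
$$T^{k_1}(1+z_1z_2)\cdot T^{k_2}\!\left(\tfrac{1}{z_1}+\tfrac{1}{z_2}\right) = T^{k/2}\!\left(z_1 + \tfrac{1}{z_1} + z_2 + \tfrac{1}{z_2}\right) = W(\bL)(z_1,z_2),$$
and similarly for the reverse composition, which is immediate from \eqref{lfpcpcp} and $k_1+k_2=k/2$. The main obstacle is the bookkeeping in the previous step: one must confirm that under the doubling bijection the holonomy pairing on $\bL = S_0\times S_1$ really yields the monomials $1, z_1z_2, z_1^{-1}, z_2^{-1}$ with the claimed signs. Concretely, this amounts to checking that the $u_2$-edge, viewed through the identification $A \cong \bP^1$, is the antipodal (reflected) copy of a hemisphere boundary, so that its intersection with the gauge hypertorus of $S_1$ has sign opposite to the naive one coming from the doubled strip — this is what swaps the roles that would otherwise produce $z_1 z_2^{-1}$ and gives instead $z_1 z_2$ and its Laurent conjugate. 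Once this sign discipline is in place, together with the Koszul conventions used in \eqref{MFcpcpf}, the proposition follows.
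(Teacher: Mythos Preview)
Your proposal is correct and follows essentially the same route as the paper: apply the doubling bijection of Lemma~\ref{lem:double} to identify the four strips, read off the holonomies $1,\ z_1z_2,\ z_1^{-1},\ z_2^{-1}$ against the gauge hypertori on the two $S^1$-factors of $\bL$, and then check the product gives $W(\bL)$. The paper's own argument is in fact even terser than yours---it simply asserts the two differentials with a reference to Figure~\ref{fig:cp1cp1}---so your explicit discussion of why the conjugation in the doubling forces $z_1z_2$ rather than $z_1z_2^{-1}$ is a welcome elaboration rather than a deviation. (Minor typo: you wrote $S_2$ where you meant $S_1$.)
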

The result agrees with the one in \cite{CHLee} which was obtained by using SYZ-fibration structure.

\section{Invariance of matrix factorizations under various choices}\label{sec:inv}
We have made a choice of gauge hypertori and Hamiltonian perturbations on Lagrangian submanifolds.
In this section, we show that a different choice of gauge hypertori gives rise to an isomorphic matrix factorization, and Hamiltonian isotopic Lagrangians induce homotopic matrix factorizations.

\subsection{Infinitesimal gauge equivalences and a choice of gauge hypertori}\label{subsec:gegh}
We first show that the isomorphism class of mirror matrix factorization does not depend on the choice of gauge hypertori.
Indeed, we may interpret changing gauge hypertori as an infinitesimal gauge equivalence.
For a trivial line bundle $E:=\bL \times \C$ on $\bL$ and two flat connection $\nabla_i$ on $E$ $(i=0,1)$, we define the infinitesimal gauge equivalence relation between two flat connections $\nabla_0$ and $\nabla_1$ as follows.
\begin{definition}
$\nabla_0$ and $\nabla_1$ are said to be infinitesimally gauge equivalent if 
we have a trivial bundle $E \times (-\epsilon', 1 +\epsilon') \to \bL \times (-\epsilon', 1 +\epsilon')$ and
a flat connection $\nabla$ on $E \times (-\epsilon', 1 +\epsilon')$ such that
$\nabla$ when restricted to $E \times \{0\} \to \bL \times \{0\}$ becomes $\nabla_0$
and when restricted to $E \times \{1\} \to \bL \times  \{1\}$ becomes $\nabla_1$.
Here $\epsilon'$ is a small positive real number.
\end{definition}
Now consider gauge hypertori 
$\{ H_i + p\} $ for two different choices of $p \in (\R/\Z)^n$.
\begin{lemma}
Flat connections constructed in Lemma \ref{lem:flat conn} from two different choices of gauge hypertori  $\{ H_i + p^j\}_{i=1}^n$ (for $j=0,1$)  are infinitesimally gauge equivalent.
\end{lemma}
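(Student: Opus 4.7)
The plan is to observe that both flat connections realize the \emph{same} holonomy representation $\rho$, and then to promote this gauge equivalence to an explicit smooth one-parameter family using the connectedness of $\C^\times$.

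First I would verify the holonomy agreement. Pick any base point $x_0 \in \bL$ lying outside tubular neighborhoods of both families of hypertori $\{H_i + p^0\}$ and $\{H_i + p^1\}$. By the explicit construction of Lemma \ref{lem:flat conn}, the holonomy of the connection built from $\{H_i + p^j\}$ along a loop $\gamma$ based at $x_0$ (transverse to the hypertori) is $\prod_i \rho(E_i)^{k_i^j}$, where $k_i^j$ is the signed intersection number of $\gamma$ with $H_i + p^j$. Since $[H_i + p^j] = E_i^* \in H^{n-1}(\bL,\Z)$ is independent of the translate $p^j$, Poincar\'e duality gives $k_i^0 = k_i^1 = \langle E_i^*, [\gamma] \rangle$. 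Because the flat connections are abelian, the holonomy depends only on the homology class of the loop, so the two connections define identical holonomy representations $\pi_1(\bL, x_0) \to \C^\times$.

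Next I would use this to exhibit a global gauge transformation. Since both connections live on the trivial bundle and induce the same holonomy representation, there exists $g \in C^\infty(\bL, \C^\times)$ with $\nabla_1 = g \cdot \nabla_0 \cdot g^{-1}$, normalized so $g(x_0) = 1$. Because $\C^\times$ is path-connected, I can choose a smooth family $g_t \in C^\infty(\bL, \C^\times)$, parametrized by $t \in (-\epsilon', 1 + \epsilon')$, with $g_t \equiv \Id$ for $t \le 0$ and $g_t \equiv g$ for $t \ge 1$ (e.g.\ by picking a logarithm branch and linearly interpolating). Setting $G(x,t) := g_t(x)$ on the trivial bundle $E \times (-\epsilon', 1 + \epsilon') \to \bL \times (-\epsilon', 1 + \epsilon')$, I define
\[
\nabla \;:=\; G \cdot \bigl( \nabla_0 \oplus d_t \bigr) \cdot G^{-1}.
\]
The product connection $\nabla_0 \oplus d_t$ is flat, and gauge transformations preserve flatness, so $\nabla$ is flat. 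Its restriction to $\bL \times \{0\}$ equals $\nabla_0$ (since $g_0 = \Id$) and to $\bL \times \{1\}$ equals $g \nabla_0 g^{-1} = \nabla_1$, exhibiting the required infinitesimal gauge equivalence.

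The only step with any subtlety is the holonomy comparison, and even there it reduces to the invariance of $[H_i + p]$ under translation of $p$; the construction of the interpolating family is routine once a global gauge $g$ is in hand. No transversality or compactness issues arise because everything takes place on the single trivial line bundle over $\bL$.
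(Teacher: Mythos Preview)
Your overall strategy is sound, but there is a real gap in the third step. You write ``Because $\C^\times$ is path-connected, I can choose a smooth family $g_t \in C^\infty(\bL,\C^\times)$ \ldots\ by picking a logarithm branch and linearly interpolating.'' Path-connectedness of the \emph{target} $\C^\times$ does not imply path-connectedness of the \emph{mapping space} $C^\infty(\bL,\C^\times)$: its components are indexed by $[\bL,\C^\times]\cong H^1(\bL,\Z)\cong\Z^n$, so a generic gauge transformation on the $n$-torus has no global logarithm and cannot be connected to the identity. You must check that your specific $g$ lies in the identity component. This is in fact true here: writing $\nabla_j=d+A_j$ one has $d\log g = A_0-A_1$, and since each $A_j$ is a bump form with $\int_{E_i}A_j = -2\pi\sqrt{-1}\,b_i$ independently of the location of the hypertorus, the periods $\int_{E_i}(A_0-A_1)$ vanish and $g$ admits a global logarithm. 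Once you insert this one-line check, your argument goes through.

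By contrast, the paper bypasses gauge transformations entirely: it writes down an explicit flat connection on $\bL\times(-\epsilon',1+\epsilon')$ by linearly sliding the bump from $p^0$ to $p^1$ in the parameter $s$ and adding the $ds$-term needed for flatness (worked out on $\R$ and left as an exercise on the torus). This is more concrete and sidesteps the homotopy issue, and it also makes the holonomy along $\{p\}\times[0,1]$ transparent---which is exactly what is used in the next lemma to build the chain isomorphism between the two matrix factorizations. Your abstract route is cleaner conceptually but loses that explicit control.
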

\begin{proof}
We prove it for a trivial bundle $\R \times \C$ over $\R$, and the proof for a torus $\bL$ is  similar. 
Consider $p^0, p^1 \in \R$, and 
the corresponding flat connections $\nabla_j$ defined by
$$\nabla_j=d - 2\pi\bi b \,\delta(t- p^j) dt$$
for $j=0,1$.
Then the desired flat connection $\nabla$ on $\R \times \C \times (-\epsilon', 1 +\epsilon') \to \R \times (-\epsilon', 1 +\epsilon') $ can be defined as
$$\nabla=d - 2\pi\bi b \, \delta(t- p^0 + (p^0 - p^1) s) dt - 2(p^0 - p^1)\pi\bi b \, \delta(t- p^0 + (p^0 - p^1)s) ds,$$
where $s$ is a coordinate on $(-\epsilon', 1 +\epsilon')$. One can easily check that  $\nabla$ is flat, and it gives infinitesimal gauge equivalence between $\nabla_0$ and $\nabla_1$.
\end{proof}

Let $(P^0_{p^j}, P^1_{p^j}, d^j)$ be the matrix factorization corresponding to the gauge hypertori $\{ H_i + p^j\}_{i=1}^n$.
Using the infinitesimal gauge equivalence, we define the chain isomorphism between these two matrix factorizations.
\begin{lemma}\label{lem:gimf}
There exists a chain isomorphism between two matrix factorizations $(P^0_{p^j}, P^1_{p^j}, d^j)$ for $j=1,2$, that is, we have an isomorphism
$\phi: P^\bullet_{p^0} \to P^\bullet_{p^1}$ 
such that $\phi \circ d = d \circ \phi$.
\end{lemma}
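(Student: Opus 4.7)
The plan is to exploit the gauge equivalence between the two flat connections $\nabla_{z,0}$ and $\nabla_{z,1}$ (coming from the two choices of base point $p^0, p^1$) in order to build the chain isomorphism directly, by rescaling each generator $p \in \bL \cap L_1$ by a unit in $\Lambda[z_1^{\pm 1},\ldots,z_n^{\pm 1}]$. Since $\nabla_{z,0}$ and $\nabla_{z,1}$ have the same holonomy representation $\rho^b$, there should be a well-defined gauge function $g : \bL \to \C^\times$ with $\nabla_{z,1} = \nabla_{z,0} + d\log g$, and the isomorphism will be fiberwise multiplication by $g^{-1}$.

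\textbf{Construction of $g$.} For each $i$, the hypertori $H_i + p^0$ and $H_i + p^1$ are homologous in $\bL$ (both Poincar\'e dual to $E_i^*$), so their difference is the boundary of some $n$-chain $A_i \subset \bL$. Fix a basepoint $*_0 \in \bL$ and, for a generic point $x \in \bL$, define $n_i(x) \in \Z$ to be the signed intersection number of any path from $*_0$ to $x$ with $(H_i + p^1) - (H_i + p^0)$; this is path-independent because $(H_i + p^1) - (H_i + p^0) = \partial A_i$ is null-homologous. Set
$$g(x) := \prod_{i=1}^n z_i^{n_i(x)} \in \Lambda[z_1^{\pm 1}, \ldots, z_n^{\pm 1}]^\times.$$
The monomial formula for parallel transport recorded in Section \ref{sec:3} gives, for any path $\gamma$ from $x$ to $y$ avoiding the gauge hypertori,
$$\Pal^{\nabla_{z,1}}_{\gamma} = \frac{g(y)}{g(x)} \cdot \Pal^{\nabla_{z,0}}_{\gamma},$$
since the exponent of $z_i$ on the left minus that on the right equals the signed count of $\gamma \cdot (H_i+p^1) - \gamma \cdot (H_i+p^0) = n_i(y) - n_i(x)$.

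\textbf{The chain isomorphism.} Because intersection points of $\bL \cap L_1$ were arranged to lie outside the neighborhood $U_H$, each $g(p)$ for $p \in \bL \cap L_1$ is a well-defined Laurent monomial and hence a unit. Define
$$\phi : P^\bullet_{p^0} \longrightarrow P^\bullet_{p^1}, \qquad \phi(p) := g(p)^{-1} \cdot p,$$
extended $\Lambda[z_1^{\pm 1},\ldots,z_n^{\pm 1}]$-linearly. This is an isomorphism of free modules. To verify $\phi \circ d^0 = d^1 \circ \phi$, observe that for each $J$-holomorphic strip $u$ from $p$ to $q$ the $L_1$-boundary factor $\Pal_{\partial_1 u}$, the sign $(-1)^{a(u)}$, and the area weight $T^{\omega(u)}$ are identical for both gauges; only $(\Pal^{\nabla_{z,j}}_{\partial_0 u})^{-1}$ differs, and by the transport identity above we have $(\Pal^{\nabla_{z,1}}_{\partial_0 u})^{-1} = (g(p)/g(q))(\Pal^{\nabla_{z,0}}_{\partial_0 u})^{-1}$. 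Thus the $(q,p)$-matrix entry of $d^1$ is $g(p)/g(q)$ times that of $d^0$, which is precisely the conjugation identity equivalent to $\phi \circ d^0 = d^1 \circ \phi$.

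\textbf{Main obstacle.} The only real subtlety is checking that $g(p)$ is a well-defined Laurent monomial at each intersection point of $\bL \cap L_1$; this reduces to the homological fact that $(H_i + p^1)-(H_i + p^0)$ bounds, which is automatic since both hypertori represent the same class. The infinitesimal gauge equivalence furnished in the preceding lemma is what guarantees that this same $g$ is realized by an honest flat gauge transformation over $\bL \times [0,1]$, so no moduli-space or transversality arguments beyond those already in Section \ref{sec:3} are required. The Koszul sign in $d = (-1)^{\deg}\delta$ is unaffected by the diagonal rescaling $\phi$, so it passes through the verification without modification.
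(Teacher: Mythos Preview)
Your proof is correct and follows essentially the same strategy as the paper: define $\phi$ as a diagonal rescaling of the generators by units, and verify the chain-map identity from the relation between parallel transports for the two gauges. The paper phrases this via the infinitesimal gauge equivalence on $\bL \times [0,1]$: it takes $\phi(p)$ to be multiplication by the holonomy of the interpolating flat connection $\widetilde{\nabla}$ along the vertical segment $\{p\} \times [0,1]$, and deduces $\phi \circ d = d \circ \phi$ from flatness of $\widetilde{\nabla}$ (path-independence of parallel transport on $\bL \times [0,1]$). Your $g(p)$ is exactly this vertical holonomy, computed combinatorially as $\prod_i z_i^{n_i(p)}$ via intersection numbers with the null-homologous cycle $(H_i+p^1)-(H_i+p^0)$.

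One small remark: your argument is actually more self-contained than you give it credit for. You cite the preceding lemma as ``guaranteeing that $g$ is realized by an honest flat gauge transformation,'' but your verification of $\phi \circ d^0 = d^1 \circ \phi$ uses only the combinatorial transport identity $\Pal^{\nabla_{z,1}}_\gamma = (g(y)/g(x))\,\Pal^{\nabla_{z,0}}_\gamma$, which you proved directly from intersection numbers without invoking any connection on $\bL \times [0,1]$. So the infinitesimal gauge equivalence lemma is not logically needed in your version; the homological fact that the two hypertori are in the same class suffices. This makes your write-up slightly more elementary, at the cost of being less conceptual than the paper's flatness argument.
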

\begin{proof}
Let ($\WT{\CL}, \WT{\nabla})$ be the trivial bundle over $\bL \times  (-\epsilon', 1 +\epsilon')$, which
defines the infinitesimal gauge equivalence between two flat connections on $\CL$ from two different gauge hypertori.

We define $\phi$ for each generator $p \in \bL \cap L_1$ to be an identity map, multiplied by
a holonomy of the bundle $\WT{\CL}$ along an interval  $ p \times [0,1]$.
To see that this gives a chain isomorphism between two matrix factorizations,
observe that the parallel transport with respect to the flat connection $\WT{\nabla}$ does not depend on the choice of a path with fixed end points. This directly leads to the chain map property $\phi \circ d = d \circ \phi$. One can construct its inverse in a similar way.
\end{proof}
In particular, this prove that the isomorphism class of the mirror matrix factorization under our construction is independent of a choice of gauge hypertori to define the connection $\nabla$.

 
\subsection{Hamiltonian isotopy}\label{sec:haminv}
Consider a Hamiltonian isotopy $\psi$ of $X$, and two Lagrangian submanifolds, $L_1$ and $L_2:=\psi(L_1)$.
For a positive Lagrangian torus $\bL$, with Floer potential $W(\bL)$, 
we obtain two matrix factorizations, $(P^0(L_i), P^1(L_i),d)$ from $L_i$ for $i=1,2$.
\begin{prop}\label{prop:haminv}
Two matrix factorizations  $(P^0(L_i), P^1(L_i),d)$ from $L_i$ for $i=1,2$ of $W(\bL)$ are homotopic to each other.
Namely, there exist  even maps $f :  P^*(L_1) \to P^*(L_2)$, $g: P^*(L_2) \to P^*(L_1)$
such that 
$$g \circ  f \sim \textrm{id},\quad f \circ g \sim \textrm{id}.$$
\end{prop}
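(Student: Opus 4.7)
The plan is to construct $f$ and $g$ as Floer-theoretic continuation maps associated to a Hamiltonian isotopy, and to verify that they (i) are module maps with respect to the gauge-hypertori convention so that their matrix entries lie in $\Lambda[z_1^{\pm 1},\ldots, z_n^{\pm 1}]$, (ii) intertwine the Floer differentials, and (iii) are mutual inverses up to an explicit chain homotopy.

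First I would pick a Hamiltonian isotopy $\psi_t$ with $\psi_0 = \mathrm{id}$ and $\psi_1 = \psi$, set $L_t = \psi_t(L_1)$, and choose a generic family of time-dependent compatible almost complex structures $\{J_t\}$ such that the positivity/transversality hypotheses of Assumption \ref{assum1} persist for the moving boundary problem. The map $f \colon CF((\bL,\cL_z),(L_1,\cL_1)) \to CF((\bL,\cL_z),(L_2,\cL_2))$ is then defined by counting finite-energy solutions $u\colon \R\times[0,1] \to X$ of a $(s,t)$-dependent perturbed Cauchy-Riemann equation with $u(s,1) \in \bL$ and $u(s,0) \in L_{\chi(s)}$, where $\chi\colon \R \to [0,1]$ is a fixed cutoff function interpolating $0$ at $s = -\infty$ and $1$ at $s = +\infty$, weighted by $T^{\omega(u)}$ and by the holonomy expression $\mathrm{Pal}_{\partial_0 u}^{-1} \cdot \mathrm{Pal}_{\partial_1 u}$ exactly as in \eqref{eq:diff}. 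The map $g$ is defined analogously using the reversed cutoff (and hence the reversed isotopy).

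The key observation for module-compatibility is that the $\bL$-boundary $\partial_0 u$ of any continuation strip is a path on $\bL$ connecting an intersection point of $\bL \cap L_1$ to one of $\bL \cap L_2$, and we can arrange by Hamiltonian perturbation of $L_1,L_2$ (cf.\ Lemma \ref{lem:gimf}) that all such intersection points avoid the neighborhood $U_H$ of the fixed gauge hypertori. Hence the holonomy factor contributes $\prod_i z_i^{-k_i}$ with $k_i \in \Z$, ensuring that $f$ and $g$ are $\Lambda[z_1^{\pm 1},\ldots,z_n^{\pm 1}]$-linear. The matrix factorization property $f \circ d_1 = d_2 \circ f$ (i.e.\ $f$ is a chain map with respect to $\delta^{\cL_z,\cL_i}$) then follows from the standard analysis of the one-dimensional boundary strata of the moduli space of continuation strips: they split either into broken continuation strips (giving the $d_2 \circ f - f \circ d_1$ terms) or into continuation strips with a Maslov-two disc bubble attached on $\bL$ at the $s = \pm\infty$ ends. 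The disc-bubble contributions on the $\bL$ side cancel in pairs (they compute $W(\bL) \cdot \mathrm{id}$ at each end), while the disc bubbles on the $L_i$ side contribute $\Phi(\cL_1) \cdot \mathrm{id}$ and $\Phi(\cL_2) \cdot \mathrm{id}$. Since Hamiltonian isotopy preserves this potential value (under the same flat connection transported by $\psi$), these latter contributions also cancel, giving the honest chain-map identity.

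Finally, to produce the homotopy $g \circ f \sim \mathrm{id}$, I would introduce a one-parameter family of continuation data interpolating between the concatenation of the cutoffs for $f$ and $g$ and the trivial (constant) continuation; counting rigid strips in the resulting parametrized moduli space defines an operator $h$, and the codimension-one boundary of the one-dimensional component yields $g \circ f - \mathrm{id} = d \circ h + h \circ d$ (plus disc-bubble terms which again cancel by the $\Phi(\cL_i)$-matching argument). An identical argument handles $f \circ g \sim \mathrm{id}$.

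The main obstacle I anticipate is bookkeeping the disc-bubble contributions on the moving $L_t$ boundary: unlike the standard setting where bubbling is ruled out, here Maslov-two bubbles genuinely occur. The clean way to handle this is to note that $\Phi$ is a Hamiltonian-isotopy invariant of $(L,\cL)$ (open Gromov-Witten invariance of the disc potential under Hamiltonian isotopy), so that every such bubble contribution on one end of a degenerating configuration is exactly balanced by its counterpart on the other end. Once this cancellation is recorded, the rest of the argument is a direct transcription of the standard Oh-Fukaya continuation argument adapted to our gauge convention.
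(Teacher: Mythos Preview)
Your approach is the same as the paper's: standard Floer continuation maps and chain homotopies (as in \cite{Fl2}, \cite{Oh1}) decorated with the holonomy weights coming from the flat connections, which is exactly what the paper does in two sentences by writing $f = f_\sharp^b$, $g = g_\sharp^b$, $h_i = H_i^b$.

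One small correction: under Assumption~\ref{assum1}, the disc-bubble discussion you give for the chain-map and homotopy identities is unnecessary. A Maslov-two bubble splitting off an index-$1$ continuation strip (or an index-$0$ strip in the parametrized homotopy family) would leave a residual continuation strip of Fredholm index $-1$ (respectively $-2$), and that moduli space is generically empty---so there are no bubble boundary strata at all, and nothing to cancel. This is in contrast to the $\delta^2$ computation, where the residual component after bubbling is a \emph{constant} Floer strip and genuinely contributes the $\Phi$ terms. Your observation that $\Phi(\cL_1)=\Phi(\cL_2)$ under Hamiltonian isotopy is correct, but it enters only to guarantee that both matrix factorizations are of the same $W-\lambda$; it plays no role in verifying that $f$ is a chain map.
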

\begin{proof}
The standard continuation maps in Floer theory are given by 
$$f_\sharp: CF(\bL, L_1) \to CF(\bL,\psi(L_1)), \quad g_\sharp:  CF(\bL,\psi(L_1)) \to CF(\bL, L_1),$$ 
(see for example \cite{Fl2}, \cite{Oh1} ) which satisfies  
$$g_\sharp \circ f_\sharp = \delta \circ H_1 - H_1  \circ \delta, \quad f_\sharp \circ g_\sharp = \delta \circ H_2 - H_2  \circ \delta.$$
Here, homotopy $H_i$ is also constructed by counting parametrized version of $J$-holomorphic strips.

Now, to prove the proposition, we introduce flat line bundles $\cL_z, \CL_1$ with connections $\nabla_z, \nabla_1$ on $\bL$, $L_1$. ($L_2 = \psi(L_1)$ is  equipped with a flat line bundle $(\psi^{-1} )^\ast (\cL_1)$). Then
the same construction with an addition of holonomies yields required maps between matrix factorizations:
we denote the corresponding maps as $f:=f_\sharp^b, g:=g_\sharp^b, h_1:=H_1^b$ and  $h_2:=H_2^b$, which satisfies
$$g \circ f = d \circ h_1 - h_1 \circ d, \quad f \circ g = d \circ h_2 - h_2 \circ d.$$
\end{proof}

Consequently, if $L_2$ is a Hamiltonian isotopy image of a Lagrangian submanifold $L_1$,
then the resulting matrix factorization of $L_1$ and $L_2$ are quasi-isomorphic.

\section{$\AI$-functor}\label{sec:functor}
In this section, we extend the previous construction of the correspondence between Lagrangians and matrix factorizations in the object level to an $\AI$-functor from the Fukaya category of $X$ to the matrix factorization category of $W(\bL)$.

\subsection{Preliminaries}
Let us first recall the definition of a filtered $\AI$-category to set up notations (see  \cite{Fu1}
for details).

\begin{definition}
A filtered $\AI$-category $\mathcal{C}$ consists of a collection of objects $Ob(\mathcal{C})$ together with
morphisms $\mathcal{C}(C_1,C_2)$ for $C_1,C_2 \in Ob(\mathcal{C})$ given by a graded torsion-free filtered $\Lambda_0$-module,
and degree-$1$ operations $m_k$'s on morphisms 
$$m_k: \mathcal{C}[1](C_1,C_2) \otimes \cdots \mathcal{C}[1](C_{k-1},C_k) \to \mathcal{C}[1](C_0, C_k)$$
for $k=0,1,\cdots$, which respects the filtration and satisfy the following $\AI$-equations:
 for $x_i \in \mathcal{C}[1](C_{i-1},C_i)$ ($i=1,\cdots, n$), we have
 $$\sum_{k_1+k_2= k+1} \sum_{i=1}^{k_1} (-1)^{\epsilon} m_{k_1}(x_1,\cdots, x_{i-1}, m_{k_2}(x_i,\cdots,x_{i+k_2-1}),
 \cdots, x_k)=0$$
where $\epsilon = \sum_{j=1}^{i-1}(  |x_j|+1)$.
\end{definition}
A {\em filtered differential graded category} $\mathcal{C}$ is a filtered $\AI$-category with $m_0=m_{\geq3} \equiv 0$.

An $\AI$-functor $\mathcal{F}$ between two filtered $\AI$-categories $\mathcal{C}_1, \mathcal{C}_2$ can be defined
as follows. First, we have a map between objects $\mathcal{F}_0: Ob(\mathcal{C}_1) \to Ob(\mathcal{C}_2)$.
And given $A,B \in Ob(\mathcal{C}_1)$, 
we have homomorphism of degree 0:
$$\mathcal{F}_j(A,B) :  \bigoplus_{A=C_0,C_1,\cdots,C_j = B}
 \mathcal{C}[1](C_0,C_1) \otimes \cdots \mathcal{C}[1](C_{j-1},C_j) \to \mathcal{C}_2[1](\mathcal{F}_0(A), \mathcal{F}_0(B)),$$
for each $j =1,2,\cdots$ which satisfies $\AI$-functor equation (see \cite{Fu1}).
  %
%

Our main concerns are the following two $\AI$-categories: one is the Fukaya category of a symplectic manifold $X$, and the other is  a filtered differential graded category $\mathcal{MF}(W)$,
of matrix factorizations of $W(\bL)$.

Let us first define the matrix factorization category which is a differential graded category.
\begin{definition}
Let $\mathcal{O}$ be the algebra $\Lambda[z_1^{\pm 1},z_2^{\pm 1}, \cdots, z_n^{\pm 1}]$.
The category of matrix factorization $\mathcal{MF}(W)$ of $W \in \mathcal{O}$ is defined as follows.
An object of $\mathcal{MF}(W)$ is a finite dimensional free $\Z/2$-graded $\mathcal{O}$-modules $P= P^0 \oplus P^1$,
together with an odd map $d:P \to P$ such that 
$$d^2 = (W  - \lambda )\cdot \textrm{Id}_P$$ for some $\lambda \in \C$.

A morphism between two matrix factorizations $(P, d_P), (Q, d_Q)$ is given by
an $\mathcal{O}$-module homomorphism $f:P \to Q$. A differential $d$ on a morphism $f$ is
given as $$d (f) = d_Q \circ f + (-1)^{\textrm{deg f}} f \circ d_P,$$
and composition of morphisms are defined as usual.
\end{definition}
Recall that differential graded category $\mathcal{C}$ with differential $d$, and composition $\circ$
gives rise to an $\AI$-category with the following sign convention.
$$m_1 (x) = (-1)^{\deg x} d(x), m_2(x_1,x_2) = (-1)^{\deg x_1( \deg x_2+1)} x_2 \circ x_1.$$
From now on, we regard $\mathcal{MF}(W)$ as a filtered $\AI$-category with vanishing $m_0, m_{\geq 3}$.

The Fukaya category $\mathcal{F}uk(X)$ of a symplectic manifold $X$ is defined as follows.
We only sketch the setup briefly, and leave detailed construction to \cite{Fu1} and \cite{FOOO}.
An object of Fukaya category $\mathcal{F}uk(X)$ is a spin (oriented) Lagrangian submanifold  with a  flat complex line bundle, and in addition, it is assumed to be {\em positive} (or in general  weakly unobstructed in the sense of \cite{FOOO}).
We remark that the grading datum is not included, as we use $\Z/2$-grading. We require an object to be spin (not  relatively spin)  so that Lagrangian Floer complex with $L_0$ is defined over a characteristic-$0$ field $\Lambda$.
(But we can work in more general setting as illustrated in Section \ref{sect:RP}.)

Morphisms and $m_1$ between two objects $L_0, L_1$ are given by the Lagrangian Floer complex $CF^*(L_0, L_1)$ as explained before.  (Here we omit the notation of flat line bundles on $L_i$ for simplicity.)

Higher morphism $m_k$ is defined by counting $J$-holomorphic polygons:
For distinct Lagrangian submanifolds $L_0,\cdots, L_k$, the $\AI$-operation $m_k$  is
defined as
\begin{equation}\label{eqmkfc}
m_k: CF(L_0,L_1) \times \cdots \times CF(L_{k-1}, L_k) \to CF(L_0, L_k)
\end{equation}
$$m_k(p_1,\cdots, p_k) = \sum_q  n(p_1,\cdots,p_k,y) y$$
where $n(p_1,\cdots,p_k,y)$ are the contributions from signed counting of  $J$-holomorphic
polygon $u$'s together an symplectic area $T^{w(u)}$ with  holonomy effects from flat complex line bundles along their
boundary $u(\partial D^2)$. (see Definition 3.26 \cite{Fu1}).
Here, $J$-holomorphic polygon above is a map $u$ from $D^2$ with $k+1$ punctures $z_0, \cdots, z_{k} \in \partial D^2$
such that a part of the boundary $\partial D^2$ between $z_i, z_{i+1}$ is required to map into $L_i$ for $i=0,\cdots, k$
and the map $u$ limits to the intersection point $p_i$ at the puncture $z_i$ for $i=1,\cdots, k$ whereas
$u$ limits to $\bar{y}$ at the puncture $z_0$. Here, $\bar{y}$ is the intersection point $y$ regarded as an element of $CF(L_k,L_0)$.
Since we assumed that Lagrangians are positive(Assumption \ref{assum1}), we can use domain-dependent perturbations (as in \cite{Se}) to make
the above operation transversal, and satisfy $\AI$-operations between transversal Lagrangians. Lagrangians here can have nontrivial $m_0$ given by Maslov index disc two contributions.

When Lagrangians are not distinct, the construction of $m_k$ needs more advanced machinery
such as Kuranishi structures, and we refer readers  to \cite{Fu1} for details. We remark that one may instead work with $\AI$-pre-categories
defined by  Kontsevich-Soibelman \cite{KS}. The construction of $\AI$-functor will resemble  the Yoneda embedding, and
hence functor is well-defined once the Fukaya category itself is well-defined, which we  assume from now on.

\subsection{Localized mirror functor}
Let us fix a reference positive Lagrangian torus $\bL$ in a symplectic manifold $X$ of real dimension $2n$.
Let us denote its Floer potential by $W(:=W(\bL))$. 
We regard the potential  $W$ as an element of  $\Lambda \ll z_1^{\pm 1},z_2^{\pm 1}, \cdots, z_n^{\pm 1} \gg$.

We fix gauge hypertori $\{p_i + H_i\}_{i=1}^n$ of $\bL$, and its sufficiently small neighborhood $U \subset \bL$.
For the Fukaya category $\mathcal{F}uk(X)$ of $X$, we suppose that it has only countably many objects $(L_i, \mathcal{L}_i)$ for $i =1,2, \cdots$.
We may assume that each Lagrangian submanifold $L_i$ is transverse to $\bL$.
Furthermore, we may assume that the intersection $L_i \cap \bL $ is away from the gauge hypertori and in particular disjoint from $U$.
This can be done by taking a suitable Hamiltonian isotopy of $L_i$'s in the following way if necessary.
For any finite, say $k$ points of $\bL$,  we can move it to  another configuration of $k$ points
by a Hamiltonian isotopy preserving $\bL$ (see Lemma 2.7 \cite{W}). Hence, for any $L_i$, we can move points in $L_i \cap \bL$
away from $U$ by a Hamiltonian isotopy $\phi_H$, and we take $\phi_H ({L_i})$ as an object of the Fukaya category $\mathcal{F}uk(X)$
instead of $L_i$.  

In particular, we do not take the reference Lagrangian $\bL$ itself to be an object of this Fukaya category $\mathcal{F}uk(X)$, but take a suitable Hamiltonian isotopy image of $\bL$ (to be one of $L_i$'s).
The above step is essential to define the mirror functor.
In fact, we first consider slighted extended version of Fukaya category $\mathcal{F}uk_{+0}(X)$ whose
objects are $\{(\bL, \cL) \} \cup \{ (L_i, \mathcal{L}_i) :  i=1,2,\cdots\}$, satisfying the above conditions, and restrict to the objects of $i=1,2,\cdots$
to obtain Fukaya category $\mathcal{F}uk(X)$.

Let us denote by $\{m_k\}_{k=1}^\infty$ the $\AI$-operations on $\mathcal{F}uk_{+0}(X)$ (and then those on $\mathcal{F}uk(X)$ by restriction). Now, recall that for $\bL$, the holonomy of the bundle $\cL_z$ is written in mirror parameters $z_i = \rho^b(E_i)$  (see equation \eqref{eq:defrho}), where $b= \sum x_i E_i^* \in H^1(\bL,\C)/H^1(\bL,\Z)$. To highlight the commonly used notation $b$ of deformation parameter, we write $\cL^b_z$ for $\cL_z$ in this section.

In what follows, we will always put $\bL$ at the first slot in the $\AI$-operation \eqref{eqmkfc}.
Then, we will modify the definition $m_k$ by incorporating the effect of $\cL_z^b$.
Namely, for a relevant $J$-holomorphic polygon $u$, we will record the holonomy contribution of $\cL^b$ along the arc $u(\partial D^2)$ between $0$-th marked point $p_0$ and
$1$-st marked point $p_1$.   Like in the formula \eqref{eq:mkfr}, we modify $m_k$ by
multiplying this holonomy effect along $p_0\,p_1$-arc, and denote it as $m_k^{b,0,\cdots,0}$.
This is not exactly the same as $m_k^{b,0,\cdots,0}$ defined in \cite{FOOO}, but it should be considered as its
line bundle analogue. (Here, we do not define $m_k$-operation for $E_i^*$, and this is the reason of potential confusion. For the proper comparison, we can set a geometric representative  $\widetilde{b} =  \sum z_i (p + H_i)$,
and then what we define may be considered as $m_k^{\widetilde{b},0,\cdots,0}$ defined in \cite{FOOO}.)

In any case, what is important is that we obtain the correct the $\AI$-equation for $m_k^{b,0,\cdots,0}$
from the Gromov-Floer compactification of the moduli space of $J$-holomorphic polygons, by tracking the arc that
we take holonomy along.

From our assumption that the intersection points are away from the chosen gauge hypertori for  $\mathcal{F}uk(X)$,
such a holonomy for $m_k^{b,0,\cdots,0}$ of $\mathcal{F}uk(X)$ is always given as a Laurent monomial in  $z_1,\cdots, z_n$.
For example, the  differential $\delta^{\cL_z^b, \mathcal{L}_i}$ of the Floer complex $ CF((\bL,\cL_z^b), (L_i, \mathcal{L}_i))$ is nothing but $m_1^{b,0}$ between $(\bL, \cL_z^b)$ and $(L_i,\CL_i)$.

\begin{definition}
The $\AI$-functor 
$$\mathcal{LM} : \mathcal{F}uk(X) \to \mathcal{MF}(W).$$
is defined as follows.
\begin{enumerate}
\item  For objects, $\mathcal{LM}_0$ sends an object $(L_i, \mathcal{L}_i)$ of the Fukaya category to
the matrix factorization obtained by the Lagrangian Floer complex 
$$MF(L_1):= \big( CF(\bL,\cL_z^b), (L_i, \mathcal{L}_i)), m_1^{b,0} \big)$$
\item For $x_1 \in CF(L_i, L_j)$,
$$\mathcal{LM}_1(x_1): CF((\bL,\cL_z^b), L_i) \to CF((\bL,\cL_z^b), L_i)$$
is defined as follows. For $y \in CF((\bL,\cL_z^b), L_i)$,
$$\mathcal{LM}_1(x_1) (y) = (-1)^{(\deg x_1+1)(\deg p +1)} m_2^{b,0,0} (y,x_1). $$
\item Similarly,  $\mathcal{LM}_k$ is defined as follows.
For $x_j \in CF(L_{i_j}, L_{i_{j+1}})$ for $j=1,\cdots,k$,
$$\mathcal{LM}_k(x_1,\cdots,x_k): CF((\bL,\cL_z^b), L_{i_1}) \to CF((\bL,\cL_z^b), L_{i_k})$$
sends $y \in CF((\bL,\cL_z^b), L_{i_1})$ to
$$\mathcal{LM}_k(x_1,x_2,\cdots,x_k) (y) = (-1)^{(k+ \sum_i \deg x_i)(\deg p +1)} m_k^{b,0,\cdots,0} (y,x_1,\cdots,x_k). $$
\end{enumerate}
\end{definition}

We refer readers to Section 2 of \cite{CHL} for the algebraic formalism of mirror functor.  The proof of Theorem 2.18 in \cite{CHL} carries over to the setting here and give
\begin{theorem} \label{thm:functor}
The collection of maps $\{\mathcal{LM}_k\}$ defines an $\AI$-functor.
\end{theorem}

We remark that the above construction naturally generalizes to weakly unobstructed Lagrangians.
Namely, the condition that $L_i$'s are positive Lagrangians in a Fano manifold can be relaxed to the condition that $L_i$'s are
weakly unobstructed Lagrangians in a symplectic manifold in the sense of \cite{FOOO}.  In this case,  we consider $(L_i, b_i)$ for the weak bounding cochain $b_i$ of $L_i$ as an object of  $\mathcal{F}uk(X)$, and then replace $m_k^{b,0,\cdots, 0}$ by $m_k^{b, b_1,\cdots, b_k}$. The rest of the procedure is the same, and  we leave the details as an exercise (see also Theorem 2.19 \cite{CHL}).

\section{Matrix factorizations and pearl complex}\label{sec:pearl}
Let $\bL$ be a positive Lagrangian torus with a Floer potential $W(\bL)$.  In the last section, we have defined an $\AI$-functor transforming Lagrangian branes $(L_1,\CL_1)$ to a matrix factorization (see also Definition \ref{def:mirMF}).  In this section, we consider the case of transforming $\bL$ itself, namely  $L_1= \bL$ with a fixed line bundle $\cL_1$ on it.

From Proposition \ref{prop:haminv} on Hamiltonian invariance, we may take
a Hamiltonian isotopy $\phi$ such that $\phi(\bL)$ intersects $\bL$ transversely and define its mirror matrix factorization by the Floer complex of $(\bL, \phi(\bL))$. On the other hand, Bott-Morse Floer theory is very useful for computation.  In this section,  we define the mirror matrix factorization by using pearl complex defined by Biran-Cornea \cite{BC}  (the idea of such a complex appeared in \cite{O_pearl} earlier).  As an application we compute the matrix factorization mirror to the Clifford torus in $\mathbb{P}^2$, which agrees with the result of \cite{CL} from a different approach.  Later in Section \ref{subsec:toric}, we use pearl complex to construct mirror matrix factorizations for general toric Fano manifolds.

\subsection{Pearl complex with decoration by flat bundles}
 We first recall from \cite{BC} the set-up of a pearl complex for a positive spin Lagrangian submanifold $\bL$.
  We fix a generic Morse-Smale function $h$ on  $\bL$.
The pearl chain complex $C^{pearl}(\bL, h)$ is a $\Z/2$-graded free $\Lambda_\Z$-module generated by critical points of $h$,
where the $\Z/2$-grading come from Morse indices of $h$.
The differential of the pearl complex is given by counting pearl trajectories.
\begin{figure}[htb!]
\begin{center}
\includegraphics[height=0.5in]{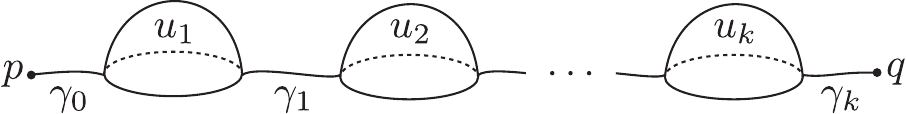}
\caption{A pearl trajectory}\label{fig:pearlcpxtraj}
\end{center}
\end{figure}

First, for each pair of critical points $p,q$ of Morse function $h$,  let $\mathcal{M}_2 (p,q)$ be the moduli space of
pearl trajectories as in Figure \ref{fig:pearlcpxtraj}, where they consist of gradient trajectories of $-h$ together with
$J$-holomorphic discs. More precisely, we have a collection of gradient trajectories
$$\gamma_0:(-\infty,t_0] \to \bL, \gamma_1:[t_0,t_1]\to \bL, \cdots, \gamma_{k-1}:[t_{k-2},t_{k-1}] \to \bL,
\gamma_{k}:[t_{k-1},\infty) \to \bL,$$
satisfying 
$$\lim_{t \to -\infty} \gamma_0(t) = p, \lim_{t \to \infty} \gamma_k(t) = q, \;\; \frac{d \gamma_i}{dt} = - \nabla h (\gamma_i(t)) \; \textrm{for all}\; i=0,1,\ldots,k, $$
and a collection of somewhere injective $J$-holomorphic discs $u_1,\cdots, u_k:(D^2,\partial D^2) \to (X,\bL)$ satisfying
$$\gamma_j(t_j) =u_{j+1}(-1), j=0,1,\ldots,k-1,$$
$$ \gamma_{j}(t_{j-1})=u_{j}(+1), j=1,2,\ldots, k.$$
If we define the total Maslov index $\mu$ by $\sum_{j=1}^n \mu(u_j)$, then the expected dimension of $\CM_2 (p,q)$ is
given by $\textrm{ind}(p) - \textrm{ind}(q) + \mu -1$. 
A pearl trajectory is a collection $\{\gamma_j\}_{j=0}^k \cup \{u_j\}_{j=1}^k$ satisfying the above conditions, and we denote it by $u$.

Now the differential for the pearl complex is given by
\begin{equation}\label{eq:pft1}
\delta_{\textnormal{pearl}} ( \langle p \rangle ) = \delta_{{\rm Morse}} (\langle p \rangle) +  \sum_{q, \textrm{ind}(q) - \textrm{ind}(p) = \mu -1 >1 } (-1)^{{\rm ind} (q)} n(p,q) \langle q \rangle,
\end{equation}
where $\delta_{{\rm Morse}}$ is the usual Morse differential, and $n(p,q)$ is the signed count of isolated pearl trajectories denoted by $u$ between $p$ and $q$ weighted by
the symplectic area $T^{\sum_{j=1}^k \omega(u_j)}$.  It is proved in   \cite[Lemma 5.1.3]{BC} that $\delta_{\textnormal{pearl}}^2=0$.

It is easy to see that $\delta_{\textnormal{pearl}}$ in fact can be decomposed in terms of the total Maslov index $\mu = \sum_{j=1}^n \mu (u_j)$,
since we know that $\mu \geq 0$ (the equality holds for Morse trajectories).
That is, we can write
\begin{equation}\label{eq:decompdpearl}
\delta_{\textnormal{pearl}} = (\delta_{\textnormal{pearl}})_{1} + (\delta_{\textnormal{pearl}})_{-1} + (\delta_{\textnormal{pearl}})_{-3} + \cdots
\end{equation}
where $(\delta_{\textnormal{pearl}})_{i}$ are contributed by the trajectories from $p$ to $q$ with $ \textrm{ind}(q) =  \textrm{ind}(p)-i$. Thus, $(\delta_{\textnormal{pearl}})_i$ increases the degree by $i$, where $\deg = \dim-\textrm{ind}$. In particular, $(\delta_{\textnormal{pearl}})_1$ equals to $\delta_{\rm Morse}$ in \eqref{eq:pft1}.
The above sum is finite since the index of a critical point is at most the dimension of the manifold.

To generalize it to the case where $\bL$ is equipped with flat bundles, let us review the proof of $(\delta_{\textnormal{pearl}})^2=0$.
The main scheme of the proof is to consider the compactification of one dimensional moduli
space of pearl trajectories, and show that the only non-trivial contribution is $(\delta_{\textnormal{pearl}})^2$, and hence obtaining $(\delta_{\textnormal{pearl}})^2=0$.
Namely, the limit where one of the gradient trajectories $\gamma_j$ ( $ 0< j< k$ ) contracts to a point has a 
canceling partner obtained by a disc-bubbling of a corresponding family of pearl trajectories.

 Denote by $\CM_2 (p,p;\beta)$ the moduli space of pearl trajectories of the type $(\gamma_0, u_1, \gamma_1)$ from a critical point $p$ to itself, where the pearl $u_1$ is a $J$-holomorphic disc of Maslov index two of homotopy class $\beta$.
It is easy to see that the dimension of $\CM(p,p,\beta)$ is one, and the possible degenerations are given as follows.
The gradient trajectory $\gamma_0$ may degenerate to a broken trajectory $\gamma_{01}$ and $\gamma_{02}$ so that
$\gamma_{01}$ is a gradient trajectory connecting $p$ to another critical point $q$ ($\textrm{ind}(q) = \textrm{ind}(p)-1$)
and $(\gamma_{02}, u_1, \gamma_1)$ is an isolated pearl trajectory from $q$ to $p$.
Similarly $\gamma_{1}$ can degenerate to a broken trajectory $\{ \gamma_{11}, \gamma_{12} \}$.  These two types of degenerations give $(\delta_{\textnormal{pearl}})^2$.
\begin{figure}[htb!]
\begin{center}
\includegraphics[height=2.4in]{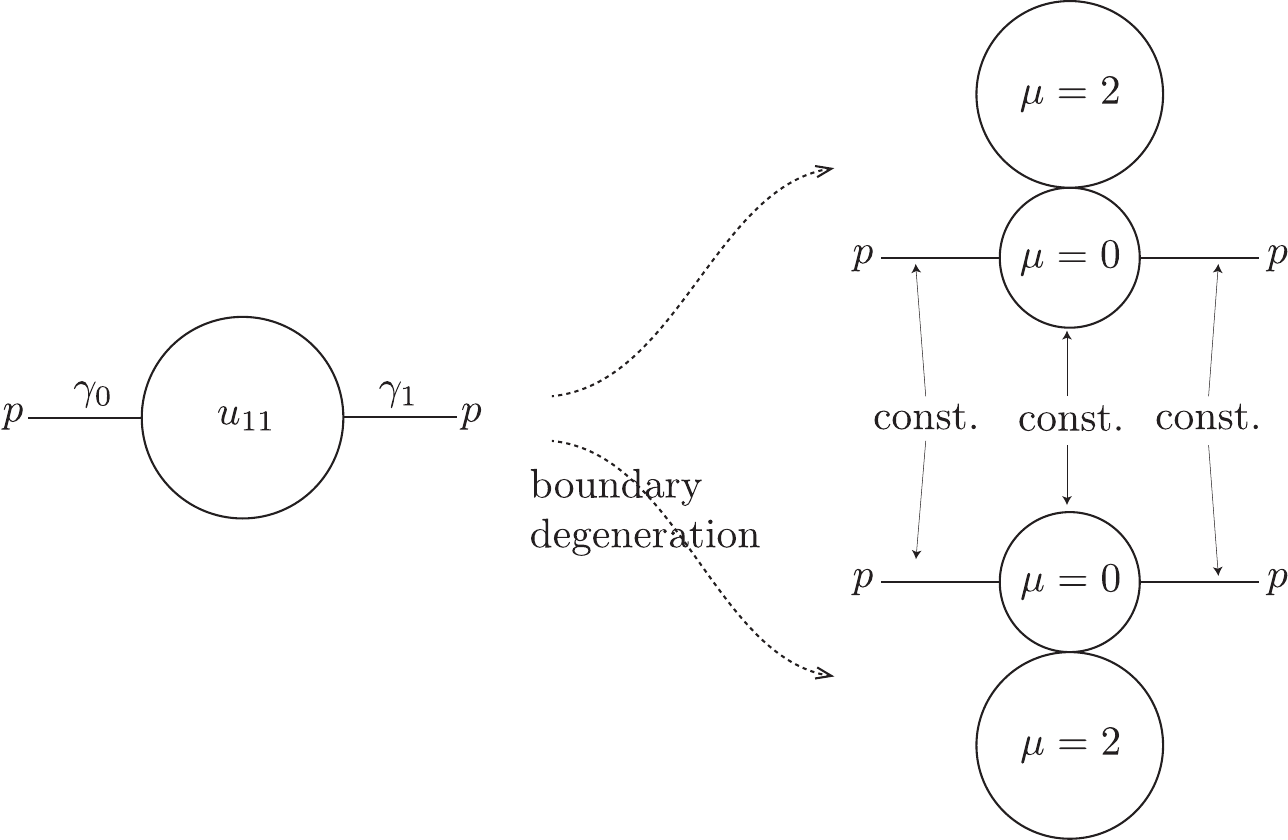}
\caption{Additional types of degenerations}\label{fig:pb}
\end{center}
\end{figure}

There are two additional types of degeneration (see Figure \ref{fig:pb}), which are in fact not found in the discussion of Lemma 5.1.3 \cite{BC} (in the case of \cite{BC} their contributions cancel each other and so do not affect the result).  They play an important role in our story.

Such a degeneration is given by a pearl trajectory $(\gamma_0, u_{11}, \gamma_1)$ with a disc bubble $u_{12}$ attached
either at the upper semi-circle or at the lower semi-circle of $u_{11}$, and $\gamma_0, \gamma_1$ are constant
gradient trajectories attached to the component $u_{11}$ which is a constant disc.  These two contributions cancel each other, and as a result we have $(\delta_{\textnormal{pearl}})^2 = (\Phi(L_0) - \Phi(L_0))= 0.$

Now, we consider the same complex decorated with two flat complex line bundles
$\CL_0, \CL_1$ over $\bL$.
Namely,  we consider a pearl complex for the Floer homology $HF((\bL,\CL_0), (\bL, \CL_1))$.
The pearl chain complex $C^{pearl}(\bL,\CL_0,\CL_1, h)$ is a $\Z/2$-graded free $\Lambda_\C$-module generated by critical points of $h$, where $\Z/2$-grading come from Morse indices of $h$ as before.
The differential of the pearl complex is given by counting pearl trajectories weighted by holonomy and areas.

Given an isolated pearl trajectory $u$ from $p$ to $q$, we have a path $\partial_{0}u$ (resp. $\partial_{1}u$) from $p$ to $q$ obtained by traveling along gradient trajectories and images of upper (resp. lower) semi-circle of $\partial D^2$ of the holomorphic
discs $u_1,\cdots, u_k$.
We denote by $\Pal_{\partial_i u}(\CL_i)$ the holonomy of $\CL_i$ along $\partial_i u$ from $p$ to $q$. 
Then each pearl trajectory from $p$ to $q$ contributes to the differential $\delta^{\CL_0,\CL_1}_{\textnormal{pearl}}$  as 
$$(-1)^{a(u)} \big(\Pal_{\partial_0 u}(\CL_0)\big)^{-1} \Pal_{\partial_1 u}(\CL_1),$$ where $(-1)^{a(u)}$ is the sign for the pearl trajectory
and sum of such contributions together with the
area $T^{\sum_{j=1}^k \omega(u_j)}$ define $n(p,q)$ with holonomy effects for \eqref{eq:pft1}.
Note that  gradient trajectories in a pearl trajectory contribute to both $\Pal_{\partial_i u}(\CL_i)$ for $i=0,1$ with opposite directions, but their holonomies may not cancel out since $\CL_0$ is not equal to $\CL_1$. Here, the holonomy contribution from a gradient flow should be analyzed carefully since it is not clear which part of the flow lies on $(\bL,\mathcal{L}_0)$ or $(\bL,\mathcal{L}_1)$ from the picture of the pearl trajectory itself ((a) of Figure \ref{fig:flowdoubled}). For this, we use the schematic picture of the trajectory drawn as in (b) of Figure \ref{fig:flowdoubled}.

\begin{figure}[htb!]
\begin{center}
\includegraphics[height=1.4in]{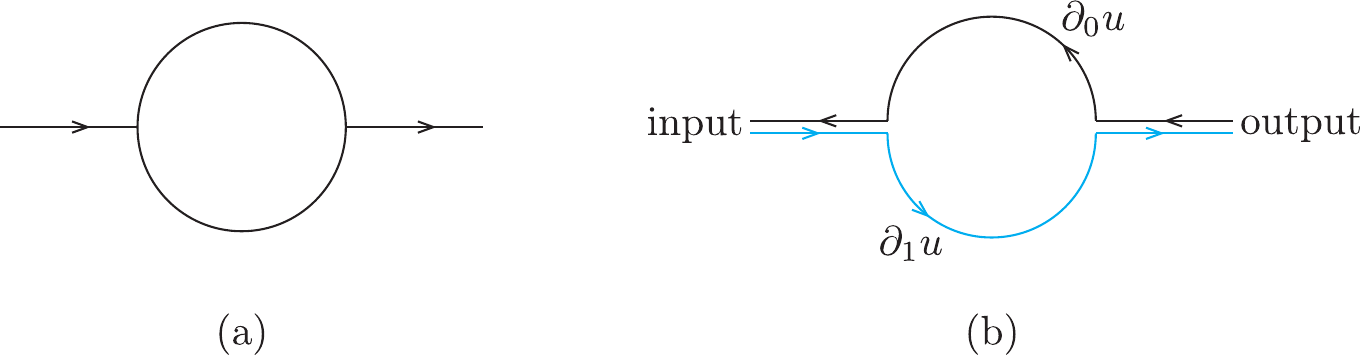}
\caption{Paths $\partial_0{u}$, $\partial_1u$ from a pearl trajectory}\label{fig:flowdoubled}
\end{center}
\end{figure}

As in \eqref{eq2:floer}, we obtain 
\begin{equation}\label{eq:d2wpearl}
(\delta^{\mathcal{L}_0, \mathcal{L}_1}_{\textnormal{pearl}} )^2  =  \big( \Phi(\CL_1)  - \Phi(\CL_0) \big),
\end{equation}
where  the right hand side comes from the two contributions drawn in Figure \ref{fig:pb}. 
The precise sign of the above formula will be proved in Appendix \ref{app:signrule}, Lemma \ref{lem:BCMFsign}.

Now, we vary flat connections on a line bundle to obtain a matrix factorization. i.e. instead of considering $\cL_0$ with a fixed flat connection, we use a family of flat line bundles $\cL_z$ whose holonomies are parametrized by $z$.
As in  \eqref{eq:ddegdelta}, we make the sign change 
$$ d:= (-1)^{{\rm deg}} \delta_{\textnormal{pearl}}$$
to obtain a matrix factorization of $W$:
 $$d^2 = W -\lambda$$
 Note that  \eqref{eq:d2wpearl} implies that with the original $\delta_{\textnormal{pearl}}$ of \cite{BC}, we instead have $(\delta_{\textnormal{pearl}})^2 = \lambda - W$.

 \cite[Proposition 5.6.2]{BC} shows that the homology of the pearl complex $(C^{pearl}(\bL, h), \delta)$ is isomorphic to
the Lagrangian Floer homology $HF(\bL,\phi(\bL))$
for a Hamiltonian isotopy $\phi$.
Such an isomorphism is  constructed by a Lagrangian version of Piuniknin-Salamon-Schwarz morphism
(see for example \cite{Albers}, \cite{KM},\cite{BC},\cite{FOOO}).
Namely, a chain map which induces an isomorphism is given by counting
another version of pearl trajectory:
It is given by a pearl trajectory $\gamma_0, u_1,\gamma_1,u_2,\cdots, u_{k},\gamma_k$
where the last $\gamma_k:[t_{k-1},\infty) \to \bL$ is replaced by $\gamma_k:[t_{k-1},t_k] \to \bL$
with an additional strip component  $u_{k+1}: (\R \times [0,1], \R \times \{0,1\})\to (M,\bL)$ satisfying
$$\partial_s u_{k+1} + J(u_{k+1}) \partial_t u_{k+1}  + \beta(s) \nabla H_t (u_{k+1}) =0$$
where $u(-\infty)$ equals $\gamma_k(t_k)$ and 
and $u(+\infty)$ maps to the Lagrangian intersection point $q \in \bL \cap \phi(\bL)$. (See Figure \ref{fig:pearlcpxtraj}.)
Here, $H_t$ is the Hamiltonian function for $\phi$, and $\beta(s)$ is a cut-off function which vanishes for $s\leq 0$, and
has value $1$ for $s\geq 0$.  Given such a trajectory, say $u$, we can similarly define a path $\partial_0 u$ (resp. $\partial_1 u$) from $p$ to $q$ as before, traveling along gradient trajectories and upper (resp. lower )
 semi-circles and $\R \times \{1\}$ (resp. $\R \times \{0\}$) in the last component.
 
Hence, by incorporating holonomy $\big(\Pal_{\partial_0 u}(\cL_z)\big)^{-1} \Pal_{\partial_1 u}(\CL_1)$ as before,
and proceeding as in Proposition \ref{prop:haminv},  we can prove that the matrix factorization
obtained from the pearl complex $(C^{pearl}(\bL,\cL_z,\CL_1, h), \delta^{\cL_z,\CL_1}_{\textnormal{pearl}})$
is equivalent to the one obtained from the Lagrangian Floer complex
 $(CF((\bL,\cL_z), (\phi(\bL), (\phi^{-1})^*\CL_1)),  \delta^{\cL_z,\CL_1}_{\textnormal{pearl}})$.

%

\subsection{The projective plane} \label{P2}
As an example, we employ a pearl complex to compute the matrix factorization mirror
to the Clifford torus of the projective plane. Let 
$\bL = \{[u_0,u_1,u_2]  \in \bP^2 \mid |u_0|=|u_1|=|u_2| \}$ be the Clifford torus in $\bP^2$.
From \cite{C0}, there are only three holomorphic discs with boundary on $\bL$ (up to $Aut(D^2)$ and $T^2$-action)
whose Maslov index is 
$2$, and these are given by 
$$D_0:=[z,1,1],\quad D_1:=[1,z,1], \quad D_2:=[1,1,z],$$ for $z \in D^2$. 
Denoting their common symplectic areas as $k$,   the Floer potential $W(\bL)$ is 
 $$W(\bL) = T^k \left(z_1 + z_2+ \frac{1}{z_1z_2} \right).$$

 We choose a Morse function $f : \bL \to \R$ such that critical points and gradient flow lines of $f$ are as shown in Figure \ref{CP2pert}. 
Such a Morse function can be chosen as follows. 
Since $\bL$ is a torus, we identify $\bL$ with $(\R/\Z)^n$, and we choose $f(x_1,\cdots, x_n) =  \sum_{i=1}^n (\sin (\pi x_i))^2$,
and compose $f$ with a diffeomorphism $h$ of $\bL$ so that the gradient flow lines of $f \circ h$ are as in Figure \ref{CP2pert}.
We use a diffeomorphism $h$ so that a Maslov index-2 disc do not meet two critical points of index difference $2$.

\begin{figure}[htb!]
\begin{center}
\includegraphics[height=1.7in]{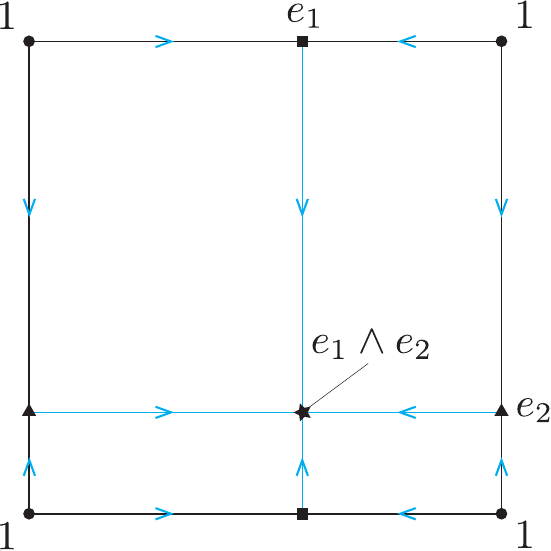}
\caption{Morse function on $\bL$}\label{CP2pert}
\end{center}
\end{figure}

It is convenient to identify the vector space
generated by critical points with the exterior algebra with two generators $e_1,e_2$ so that
four generators $1, e_1, e_2, e_{12}$ correspond to the critical points where $1$ is  the maximum and $e_{12}$ is the minimum of $f$ as in Figure \ref{CP2pert}. Here, $e_{12} = e_1 \wedge e_2$.

Now we choose gauge hypertori as in Figure \ref{hypertoriCP2}. 
Namely, for $\cL_z$, we choose hyper-tori $\{ H_i^0\}$ as
$$H_1^0 =H_1 + (0,p_2),\quad H_2^0= H_2 + (p_1,0),$$
for sufficiently small $p_1, p_2$ where $H_1, H_2$ is defined in Definition \ref{def:hi}.

\begin{figure}[htb!]
\begin{center}
\includegraphics[height=2.3in]{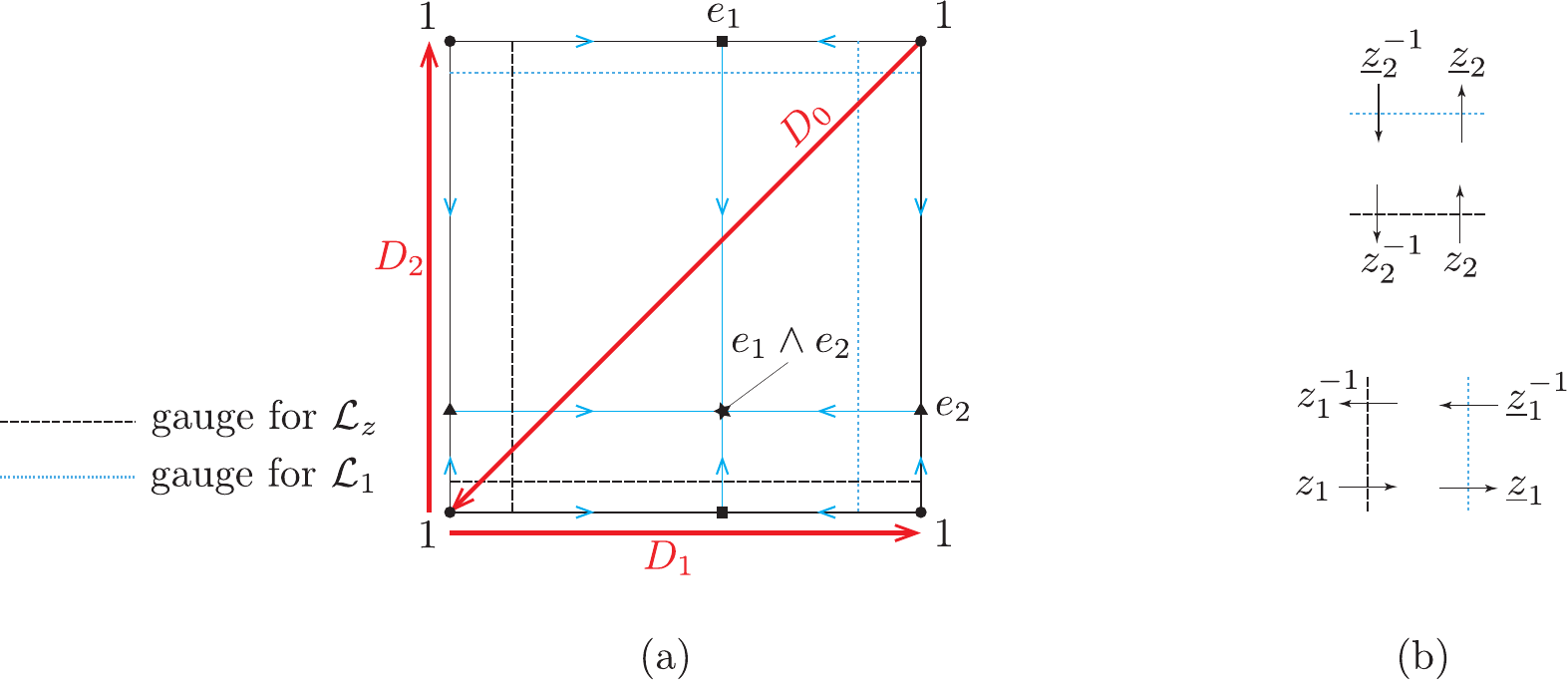}
\caption{The choice of gauge hypertori}\label{hypertoriCP2}
\end{center}
\end{figure}
We also choose a gauge hypertori for $\CL_1$ (over $\bL$) as
$$H_1^1 =H_1 + (0,1-p_2),\quad H_2^1= H_2 + (1-p_1,0).$$
Thus, $\CL_1$ is a flat complex line bundle with fixed holonomy whose connection is trivial away from gauge hypertori $\{ H_i^1\}$. 
Note that critical points of $f$ are away from gauge hypertori.

Given an isolated pearl trajectory $u$ between two critical points of $h$, we will compute the signed intersection number, say $m_1, m_2$ of a path $\partial_0 u$ with gauge hypertori
$H_1^0, H_2^0$ respectively, and the corresponding holonomy factor will be given by $z_1^{m_1} z_2^{m_2}$ for
the mirror variables $z_1,z_2$.
Also from $u$, we compute the signed  intersection number, say $m_1', m_2'$ of a path $\partial_1 u$ with
gauge hypertori $H_1^1, H_2^1$ respectively, and the corresponding holonomy factor is given by $\uz_1^{m_1'} \uz_2^{m_2'}$,
where $\uz_1, \uz_2 \in \C\setminus \{0\}$ are fixed complex numbers. Hence the total contribution
of $u$ to the differential $\delta^{\cL_z, \CL_1}$ is given as (up to sign) 
$$z_1^{-m_1} z_2^{-m_2}\uz_1^{m_1'} \uz_2^{m_2'}.$$

Let us first consider Morse differentials contained in $\delta^{\cL_z, \CL_1}_{\textnormal{pearl}}$.
For each pair of critical points of index difference one, we have two Morse trajectories of opposite directions.
From our choice of gauge hypertori, one can check that 
$$\delta^{\cL_z, \CL_1}_{\textnormal{pearl}} (1) = (z_1 - \uz_1) e_1 + (z_2 - \uz_2) e_2,  \quad\delta^{\cL_z, \CL_1}_{\textnormal{pearl}} (e_1\wedge e_2)=0,$$
$$\delta^{\cL_z, \CL_1}_{\textnormal{pearl}} (e_1) =  (z_2 - \uz_2) e_1 \wedge e_2, \quad\delta^{\cL_z, \CL_1}_{\textnormal{pearl}} (e_2) = -(z_1 - \uz_1) e_1 \wedge e_2.$$
Hence $(-1)^{\textrm{deg}}\delta^{\cL_z, \CL_1}_{\textnormal{pearl}} (\cdot) $ may be written as 
$$(z_1 - \uz_1) e_1 \wedge (\cdot) + (z_2 - \uz_2) e_2 \wedge (\cdot).$$
The precise sign will be discussed in Section \ref{subsec:signpearld1}.

In what follows, we compute the contribution from a pearl trajectory with a single holomorphic disc of Maslov index two,
which will be called  a {\em single pearl (trajectory)} for short.

\subsubsection{$D_1$ disc}
There are two   single $D_1$-pearl trajectories, one from $e_{12}$ to $e_{2}$ and one from $e_1$ to $1$.
Denote by $u = (\gamma_0, D_1, \gamma_1)$ the pearl trajectory from $e_1$ to $1$. In this case, $\gamma_0, \gamma_1$
are constant trajectories (see Remark \ref{rem:pearl1}). Then, paths $\partial_0 u, \partial_1 u$ in $\bL$ from $e_1$ to $1$
are given by part of the boundary of $D_1$ disc.

As illustrated in Figure \ref{thickCP2e1} (a), $\partial_0 u$ does not intersect gauge hypertori $H^0$ (but only intersect $H^1$)
and $\partial_1 u$ does not intersect gauge hypertori $H^1$ (but only intersect $H^0$). Hence the holonomy contribution
for $u$ is trivial.

The same argument works for a pearl trajectory from $e_{12}$ to $e_2$. Hence we may write the $D_1$ disc contributions
as $ T^k \iota_{e_1}$, where $\iota_{e_1}$ is the contraction which sends $e_1$ to $1$ and $e_{12}$ to $e_2$.

\begin{figure}[htb!]
\begin{center}
\includegraphics[height=3.2in]{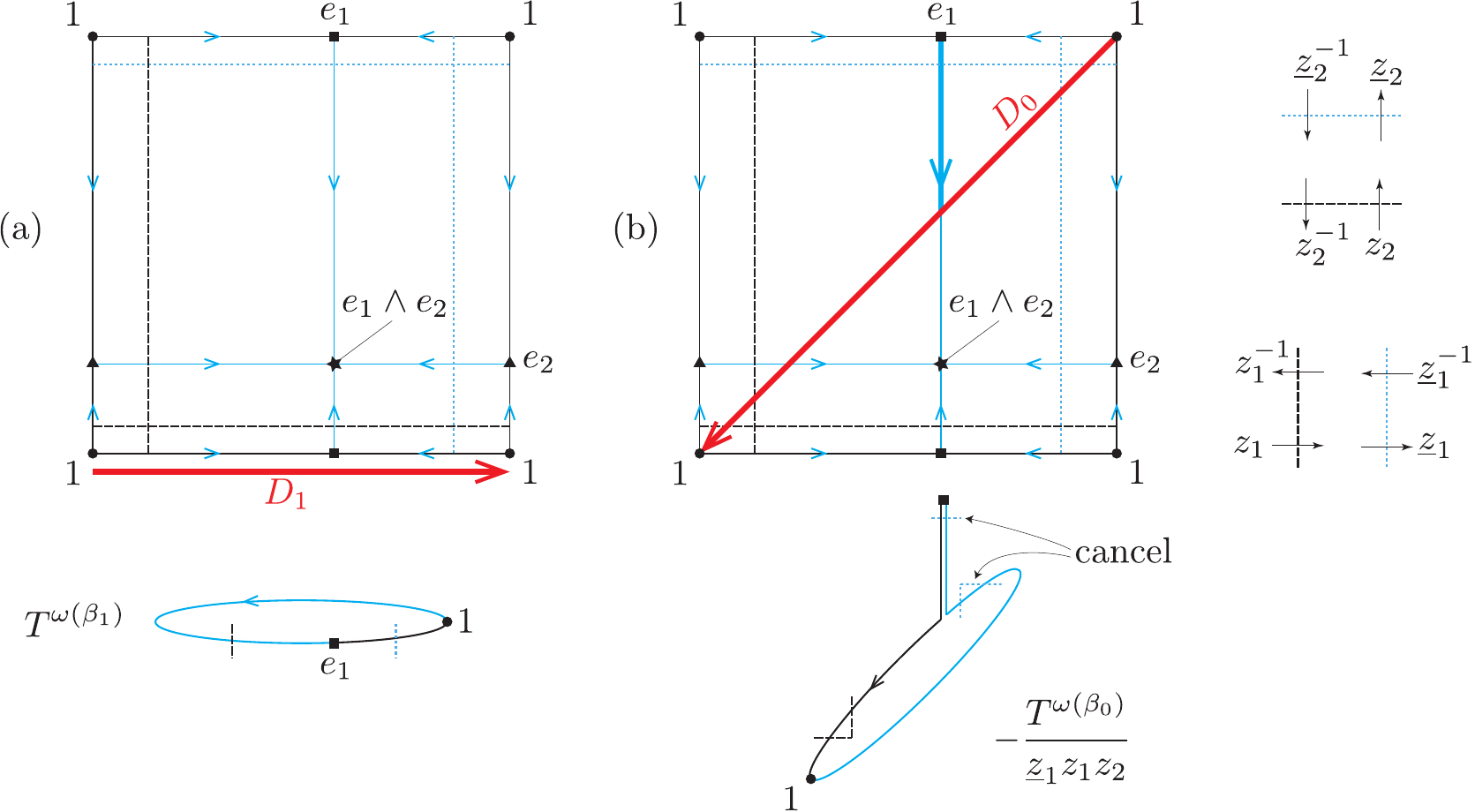}
\caption{Thin strips from $e_1$ to $1$}\label{thickCP2e1}
\end{center}
\end{figure}

\begin{remark}\label{rem:pearl1}
 In fact, in the construction in \cite{BC} using generic $f$ and $J$,  ``constant(flow)-disc-constant(flow)" configuration does not appear as
 discs do not meet two critical point at once generically. But we
 show in Lemma \ref{lem:trans} that this configuration is also transversal in toric cases, and justifies the use of the standard $J_0$, and our choice of the Morse function. Alternatively, one may choose a different Morse function corresponding to another $\Z$-basis of $\R^2$, which do not contain any normal vector to the facets of moment polytope, then such configuration will not appear also.
\end{remark}

\subsubsection{$D_2$ disc}
There are two single $D_2$-pearl trajectories, one from $e_{12}$ to $e_1$ and one from $e_2$ to $1$,
whose contributions are given by $T^k \iota_{e_2}$ as in $D_1$-case.

\subsubsection{$D_0$ disc}
There are four single $D_0$-pearl trajectories,  two from $e_{12}$ to $e_1$ and $e_2$, two
from $e_1,e_2$ to $1$.
Denote by $u = (\gamma_0, D_0, \gamma_1)$ the pearl trajectory from $e_1$ to $1$, illustrated in Figure \ref{thickCP2e1} (b).
In this case, $\gamma_0$ is a non-trivial gradient trajectory from $e_1$ to $D_0$ disc, and $\gamma_1$ is a constant trajectory.
Now, $\partial_0 u$ passes through both $H_1^0, H_2^0$  (contributing $z_1z_2$) and $\partial_1 u$ passes through $H_1^1$ (contributing $\uz_1$) once and $H_2^1$ twice (but
with opposite orientations contributing $1$), and hence the total contribution from $e_1$ to $1$ is 
$$- \frac{T^k}{\uz_1 z_1 z_2}.$$
How to obtain the precise sign will be discussed in Section \ref{subsec:signpearld2}.
One can check that  $D_0$ pearl trajectory from $e_{12}$ to $e_2$ has the same holonomy contribution as above
up to sign.
Hence, we may write them as $- \frac{T^k}{\uz_1 z_1 z_2} \iota_{e_1}$.

Similarly the pearl trajectory from $e_2$ to $1$ is illustrated in Figure \ref{thickCP2e2} (b),
whose contribution is 
$$-\frac{T^k}{\uz_1 \uz_1 z_2},$$
and the same goes for the trajectory from $e_{12}$ to $e_1$.
Hence we may write them as 
$$-\frac{T^k}{\uz_1 \uz_1 z_2} \iota_{e_2}.$$

\begin{figure}[h]
\begin{center}
\includegraphics[height=2.5in]{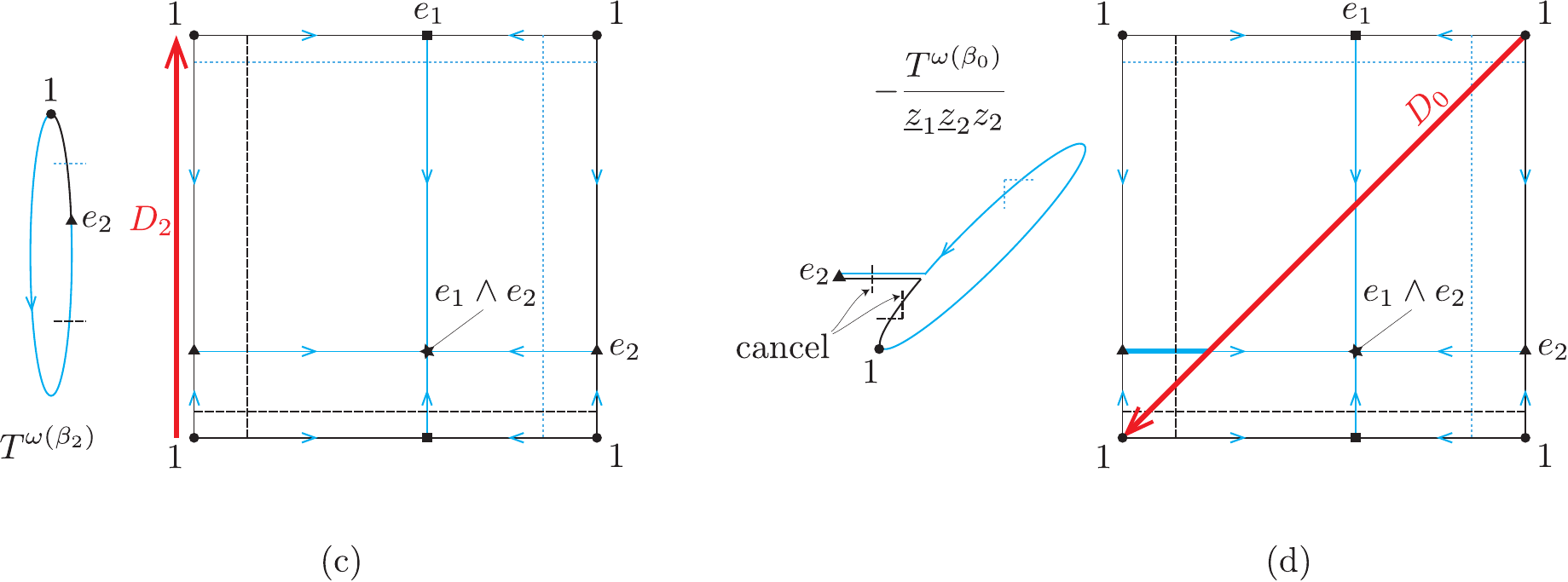}
\caption{Thin strips from $e_2$ to $1$}\label{thickCP2e2}
\end{center}
\end{figure}

Therefore, the pearl differential $(-1)^{\textrm{deg}}\delta^{\cL_z, \CL_1}_{\textnormal{pearl}}$ gives the mirror matrix factorization 
which can be written as
\begin{equation}\label{eq:MFCP2wc}
(z_1 - \uz_1) e_1 \wedge + (z_2 - \uz_2) e_2 \wedge + \left( T^k-\frac{T^k}{\uz_1 z_1 z_2} \right) \iota_{e_1} + \left(T^k - \frac{T^k}{\uz_1 \uz_2 z_2}  \right) \iota_{e_2}.
\end{equation}
The square of the above becomes
$$T^k \left(z_1 +  z_2 + \frac{1}{z_1 z_2} \right) - T^k\left( \uz_1 +  \uz_2 + \frac{1}{\uz_1 \uz_2} \right) = W(z) - W(\uz).$$
By writing the above in a matrix form, we obtain the following
\begin{prop}
The matrix factorization mirror to the Clifford torus $T^2$ with holonomy $(\uz_1,\uz_2)$
is given by
\begin{equation}\label{eq:MFCP2tor3}
\bordermatrix{   & 1 &  e_{12} &    e_1 & e_2 \cr
  1 &   	    0      &          0        & T^k-\frac{T^k}{\uz_1 z_1 z_2}  &  T^k - \frac{T^k}{\uz_1 \uz_2 z_2} \cr
  e_{12} &          0       &           0        &   -(z_2 - \uz_2) &  (z_1 - \uz_1) \cr
 e_1    &   z_1 - \uz_1    &  - T^k + \frac{T^k}{\uz_1 \uz_2 z_2}  &  0 & 0   \cr
  e_2 &  z_2 - \uz_2       &  T^k-\frac{T^k}{\uz_1 z_1 z_2}  & 0 &  0 \cr }.
 \end{equation} 
\end{prop}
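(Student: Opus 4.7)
The proposition is essentially a repackaging of the differential $\delta^{\cL_z,\CL_1}_{\textnormal{pearl}}$ whose building blocks (Morse part, $D_1$-disc part, $D_2$-disc part, $D_0$-disc part) have already been enumerated in the preceding subsection. The plan is to assemble these contributions into a single matrix in the chosen basis, apply the sign convention $d = (-1)^{\deg}\delta_{\textnormal{pearl}}$, and then verify the matrix factorization identity $d^2 = W(z)-W(\uz)$ directly from the closed-form expression \eqref{eq:MFCP2wc}.

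First, I would list the generators of the pearl complex $\{1, e_1, e_2, e_{12}\}$ together with their $\Z/2$-grading: $1$ and $e_{12}$ are even (coming from the maximum and minimum of $f$), while $e_1$ and $e_2$ are odd (index-$1$ critical points). This partitions $P = P^0 \oplus P^1$ with $P^0 = \Lambda\ll z_1^{\pm 1}, z_2^{\pm 1}\gg\cdot\{1, e_{12}\}$ and $P^1 = \Lambda\ll z_1^{\pm 1}, z_2^{\pm 1}\gg\cdot\{e_1, e_2\}$. The target matrix in the proposition is written with the ordered basis $(1,\, e_{12},\, e_1,\, e_2)$, so I would order generators accordingly.

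Next, I would transcribe the expression \eqref{eq:MFCP2wc}, which records $d$ as the sum of a wedge part $(z_1-\uz_1)e_1\wedge + (z_2-\uz_2)e_2\wedge$ and a contraction part $(T^k - T^k/(\uz_1 z_1 z_2))\iota_{e_1} + (T^k - T^k/(\uz_1\uz_2 z_2))\iota_{e_2}$, and compute the image of each basis element. For example, $d(1)$ contains only wedge contributions yielding $(z_1-\uz_1)e_1 + (z_2-\uz_2)e_2$; $d(e_{12})$ receives only contraction contributions producing a linear combination of $e_1$ and $e_2$ with a sign from $\iota_{e_1}(e_1\wedge e_2) = e_2$ and $\iota_{e_2}(e_1\wedge e_2) = -e_1$; and $d(e_i)$ mixes wedge (pushing to $e_{12}$) with contraction (pushing to $1$), with the appropriate Koszul sign from $(-1)^{\deg}$. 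Reading off the coefficients gives precisely the $4\times 4$ matrix \eqref{eq:MFCP2tor3}; in particular the sign $-(z_2-\uz_2)$ and $+(z_1-\uz_1)$ in the $(e_{12}, e_1)$ and $(e_{12}, e_2)$ entries, and the analogous pattern in the lower block, come from $\iota_{e_i}(e_1\wedge e_2)$ and from the $(-1)^{\deg}$ twist.

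Finally, I would check the matrix factorization identity by computing $d^2$ directly from the wedge/contraction decomposition. Since wedge operators anticommute among themselves and contraction operators anticommute among themselves, the only surviving cross terms are of the form $\{e_i\wedge, c_j\iota_{e_j}\} = c_i\delta_{ij}$, giving
\[
d^2 = \sum_{i=1}^2 (z_i-\uz_i)\Bigl(T^k - \frac{T^k}{\uz_1\uz_2 z_2}\cdot(\cdots)\Bigr),
\]
which after expansion collapses to $T^k(z_1 + z_2 + (z_1 z_2)^{-1}) - T^k(\uz_1 + \uz_2 + (\uz_1\uz_2)^{-1}) = W(z) - W(\uz)$, as claimed in the excerpt. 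The only slightly subtle step is bookkeeping the signs from $(-1)^{\deg}$ and from the contraction $\iota_{e_2}(e_1\wedge e_2) = -e_1$; this is the main (purely combinatorial) obstacle, and it is handled by the general sign rule promised in Section \ref{subsec:signpearld1} and Section \ref{subsec:signpearld2}. Once the signs are in place, the matrix form \eqref{eq:MFCP2tor3} and the identity $d^2 = W-W(\uz)$ follow immediately.
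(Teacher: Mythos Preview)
Your proposal is correct and follows essentially the same route as the paper: the proposition is stated as the matrix form of the wedge--contraction expression \eqref{eq:MFCP2wc}, which itself was assembled from the Morse part and the $D_0$, $D_1$, $D_2$ disc contributions computed just before. The paper likewise verifies $d^2 = W(z) - W(\uz)$ directly from \eqref{eq:MFCP2wc} and then declares the matrix \eqref{eq:MFCP2tor3} as its transcription, with the sign bookkeeping deferred to Sections~\ref{subsec:signpearld1} and~\ref{subsec:signpearld2} exactly as you note.
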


In \cite{CL}, Chan-Leung computed the matrix factorization mirror to the Clifford torus in $\bP^2$ from a sketch of arguments based on SYZ.  Let us show that by setting $(\uz_1,\uz_2) = (1,1)$, the above matrix factorization agrees with that in \cite{CL} up to a change of coordinates.

First, the Givental-Hori-Vafa superpotential of $\C P^2$ is
$$W(z') =z_1' + z_2' + \frac{q}{z_1' z_2'}$$
To identify $W(z')$ and $W(z)$, we make the following change of variables
$$ q=T^{3k}, \quad z_1' = q^{1/3} z_1, \quad z_2' = q^{1/3} z_2.$$
As we consider the case $(\uz_1,\uz_2) = (1,1)$, we set $\uz_1' = \uz_2' = q^{1/3}$.

We take the basis of  $C^{pearl} (\bL,f)$) in the following way:
$$\mbox{odd generators}\,:\,\, p_1 := e_2,\quad p_2 := - z_1' e_1,$$
$$\mbox{even generators}\,:\,\, q_1 := q^{1/3} ,\quad q_2 := q^{-1/3}z_1' e_{12}.$$
Then, the above matrix factorization \eqref{eq:MFCP2tor3} can be written as
\begin{equation}\label{eq:MFCP2tor2}
\bordermatrix{   & p_1 & q^{1/3} p_2 &  q^{1/3} q_1 & q_2  \cr
  p_1 &   	     0     &     0             &   z_2' - q^{1/3}  &    z_1' - \frac{q^{2/3}}{ z_2' }  \cr
 q^{1/3} p_2 &            0     &          0         & -\left(1 - \frac{q^{1/3}}{z_1'} \right)  &  1 - \frac{q^{1/3}}{z_2'}   \cr
q^{1/3} q_1 &    1 - \frac{q^{1/3}}{ z_2'}     &  -\left(  z_1' - \frac{q^{2/3}}{ z_2' } \right)  & 0  &   0 \cr
 q_2 &  1- \frac{q^{1/3}}{z_1'}         &   z_2' - q^{1/3}&  0& 0  \cr }.
 \end{equation}
After switching $z_1'$ and $z_2'$, \eqref{eq:MFCP2tor2} is precisely the matrix factorization appearing in \cite{CL}, 

%
%
%

\section{Toric Fano manifolds}\label{sec:toricFano}
In this section, we compute the mirror matrix factorizations of Lagrangian torus fibers of toric Fano manifolds.
We shall use pearl complexes discussed in the last section.  We expect that the same method would work for semi-Fano toric manifolds when we incorporate virtual perturbation techniques to deal with sphere bubbles of indices zero.

We take $\bL$ to be a Lagrangian torus fiber (with non-trivial Floer cohomology) in a toric Fano manifold $(X,\omega)$ and define the mirror potential $W(\bL)$ (also denoted as $W_z$) using family of flat line bundles $\cL_z$.
A weakly unobstructed Lagrangian $L$ corresponds to a matrix factorization of $W(\bL)-c$ via the mirror functor (where $m_0^L = c \, \one_L$).
We are particularly interested in the mirror matrix factorization of a Lagrangian torus fiber $\uL$ equipped with a flat line bundle $\cL_{\uz}$ (where $\uz \in (\C^\times)^n$ is fixed).

For $\uL \neq \bL$,  $\bL \cap \uL = \emptyset$ and hence the corresponding matrix factorization is trivial.  Thus we only consider the case when $\uL$ and $\bL$ are the same Lagrangian torus fiber (equipped with possibly different flat line bundles).  We will use a pearl complex as in the example given in Section \ref{P2}.  The actual computations require much more effort though.

Recall that $d$ in our case is defined by $(-1)^{\rm deg} \delta_{\textnormal{pearl}}$ (we omit upper indices indicating line bundles in $\delta_{\textnormal{pearl}}^{\cL_z, \cL_\uz}$ for simplicity). The main result is the following. The matrix factorization mirror to $(\uL,\cL_\uz)$ takes the form $(\largewedge^* \underline{\Lambda^n}, d)$ ($\Lambda$ is the Novikov ring),
\begin{equation}\label{eq:decompd1}
d = d_{1} + d_{-1} + \cdots + d_{-(2\lfloor(n+1)/2\rfloor-1)}
\end{equation}
where an element in $\largewedge^l \underline{\Lambda^n}$ has degree $l$ (or Morse index $n-l$), and $d_{-k}$ sends $\largewedge^l \underline{\Lambda^n} \to \largewedge^{l-k} \underline{\Lambda^n}$.  (The decomposition of $d$ directly comes from that of $\delta_{\textnormal{pearl}}$ in Equation \eqref{eq:decompdpearl}.)  By definition $d^2 = W_z - W_\uz$.  While it is difficult to write down all the pearl trajectories contributing to $d$, we deduce an explicit formula for the following `approximation' of $d$ (Theorem \ref{thm:explicit}):
$$\tilde{d} := d_{1} + d_{-1}.$$ 
We prove that $\tilde{d}$ itself is a matrix factorization of $W_z - W_\uz$ (Theorem \ref{thm:square=W}) which is of wedge-contraction type whose definition is given below \eqref{eq:dwcDYC}. 

Even though the explicit expression for $d$ is unknown, we can prove that $(\largewedge^* \underline{\Lambda^n}, d)$ generates the category of matrix factorizations of $W_z - W_{\uz}$, see Theorem \ref{thm:gen}.  It uses the method of spectral sequence by Polishchuk-Vaintrob \cite{PV}.  When $n\leq 3$, $d$ simply equals to $\tilde{d}$.  In general, we expect that $d$ and $\tilde{d}$ are equivalent by some quantum change of coordinates of $\largewedge^* \underline{\Lambda^n}$.  We deduce such a change of coordinate for $n=4$ in the end of Section \ref{sec:app}.

Let us recall the definition of wedge-contraction type matrix factorizations. Let $R$ be the formal power series ring on $n$ variables $x_1, \cdots, x_n$ and $W$ an element of $R$. Suppose that the origin is a unique critical point of $W$ in $W^{-1} (0)$, and $W$ can be written as $W = \sum_{i=1}^n x_i w_i$ for some series $w_i$ in $x_1, \cdots, x_n$. Consider the exterior algebra generated by $e_1, \cdots, e_n$ over $R$, which has an obvious $\Z/2$-grading. Then a wedge-contraction of matrix factorization $d_{wc}$ is defined by
\begin{equation}\label{eq:dwcDYC}
d_{wc} = \sum_i x_i e_i \wedge (-) + \sum_i w_i \iota_{e_i}.
\end{equation}
 Dyckerhoff has shown in \cite[Theorem 4.1]{Dyc} that
if $W^{-1} (0)$ has a unique singularity at the origin, then $(M,d_{wc})$ is a generator of $D^{\pi} \mathcal{MF} (W)$.


We will first recall the Floer potentials of toric Fano manifolds.  Then we deduce regularity of pearl trajectories and compute the mirror matrix factorizations.

\subsection{Localized Floer potential in the toric Fano cases}\label{subsec:toric}
Lagrangian Floer theory has been actively developed in the last decade, and Floer cohomology of
Lagrangian torus fibers has been computed from  the classification of all holomorphic discs with boundary on Lagrangian torus fibers (\cite{C},\cite{CO}), and in much more generality by Fukaya-Oh-Ohta-Ono  \cite{FOOOT}, \cite{FOOOT2} 
by introducing (bulk) deformation theories and  $T^n$-equivariant perturbations on the moduli space of holomorphic discs.

Let us first recall the  Floer potential $W(X)$ for toric Fano manifold $X$ introduced by Cho-Oh \cite{CO}
(which were generalized significantly in  \cite{FOOOT} and also in \cite{auroux07}, \cite{CLLT12} based on Strominger-Yau-Zaslow methods to understand mirror symmetry). In the Fano case, $W(X)$ can be identified with the Givental-Hori-Vafa mirror Landau-Ginzburg potential. And we compare it with the Floer potential  $W(\bL)$ for a Lagrangian torus fiber $\bL$ given in Definition \ref{def:W}.  Here, $W(X)$ depends on $X$ only, but $W(\bL)$ depends on the particular Lagrangian torus fiber $\bL$ in $X$ as well as $X$ itself.

Let $(X, \omega)$ be a $n$-dimensional toric Fano manifold with a moment polytope $P$, defined in $M_\R$ by 
the set of inequalities
$$ \langle u, v_i \rangle \geq \lambda_i \;\; \textrm{for} \; i =1,\cdots, m$$
for $u \in M_\R$ and inner normal vectors $v_i \in N$ to facets of $P$.
For each $u$ in the interior of $P$, the inverse image of the moment map  $\mu^{-1}(u)$ gives a Lagrangian torus $\bL = L(u)$, which satisfies Assumption \ref{assum1} (\cite{CO}).

For each normal vector $v_i$  of the moment polytope $P$, there exists a unique  holomorphic disc passing through a generic point $p$ (up to $Aut(D^2)$), whose homotopy class is denoted as $\beta_i \in \pi_2(X, L(u))$. Hence the number $n_{\beta_i}$ of such discs is one, and its symplectic area is
given by  $2\pi ( \langle u, v_i \rangle - \lambda_i)$.
 Let us also denote  $\partial \beta_i = \sum_{j=1}^n v_{ij} e_j$ for a basis $\{e_i\}_{i=1}^n$ of $H_1(\bL;\Z)$.
Consider holonomy parameters $\nu = (\nu_1,\cdots, \nu_n) \in \R^n$ which
is used to consider flat unitary line bundle $\CL$ over $L(u)$ with holonomy $exp(2 \pi \sqrt{-1} \nu_i)$ along $e_i$
(see Section 12 \cite{CO} for more details).

$$ W(X)= \sum_{\beta, \mu(\beta)=2} n_\beta \exp \left(- \frac{1}{2\pi}\int_\beta \omega \right) \Hol_{\CL} (\partial \beta) $$  
$$ = \sum_{i=1}^m  e^{- ( \langle u, v_i \rangle - \lambda_i)} \exp (2\pi \sqrt{-1}  \langle  \nu, v_i \rangle)$$

Hence, it is natural to introduce  mirror variables depending on the positions $u_i = \langle u, v_i \rangle$ and the holonomies $\nu_i = \langle \nu, v_i \rangle$ 
$$t_i = e^{- u_i + 2 \pi \sqrt{-1} \nu_i} \textrm{ for } i=1,\ldots,n.$$
If we denote $$t^{v_i} =  t_1^{v_{i1}} t_2^{v_{i2}} \cdots t_n^{v_{in}},$$ then $W(X)$ can be written in terms of $(t_1,\cdots, t_n)$ as
\begin{equation}\label{eq:WX1}
W(X) = \sum_{i=1}^m e^{\lambda_i} t^{v_i}
\end{equation}

Now, let us compare $W(X)$ and the Floer potential $W(\bL)$.
For this, we choose $\bL = L(u_0)$ for a fixed $u_0$ in the interior of the polytope $P$.
Recall that in our setting, mirror variable is given by the holonomy  (see \eqref{eq:defrho})
$$z_i = exp(2\pi \sqrt{-1} \nu_i). $$
By setting $z^{v_i} =  z_1^{v_{i1}} z_2^{v_{i2}} \cdots z_n^{v_{in}},$
and from the Definition \ref{def:W}, Floer potential is 
\begin{equation}\label{eq:WL1}
W(\bL) = \sum_{i=1}^m  T^{\omega(\beta)} \rho^b(\partial \beta) = 
\sum_{i=1}^m c_i z^{v_i}
\end{equation}
where $c_i :=T^{2\pi (\langle u, v_i \rangle - \lambda_i)}$.

\begin{remark}
The Floer potential in semi-Fano case is given by
$$ W(\bL) = \sum_{i=1}^m \left(\sum_\alpha n_{\beta_i+\alpha} T^{\int_{\beta_i+\alpha} \omega} \right)\Hol_\cL (\partial \beta_i) = \sum_{i=1}^m c_i z^{v_i} $$
where $c_i = \sum_\alpha n_{\beta_i+\alpha} T^{\int_{\beta_i+\alpha} \omega}$ and $z^{v_i} = \prod_{j=1}^n z_j^{v_{i,j}} = \Hol_\cL (\partial \beta_i)$.  The sum is over all $\alpha \in H_2^\eff(X)$ with $c_1(\alpha)=0$, and $n_{\beta_i}=1$.
\end{remark}

Comparing \eqref{eq:WX1} and \eqref{eq:WL1}, we obtain the following lemma which is well-known from \cite{FOOOT}.
\begin{lemma}\label{lem:toriclocid}
For toric Fano manifolds,  the substitution $t_i = z_i \cdot e^{-\langle u,v_i\rangle}$ and $T^{2\pi} = e^{-1}$ gives 
$$W(X) = W(\bL).$$
\end{lemma}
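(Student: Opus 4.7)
The plan is to verify the identity by a direct term-by-term comparison of the two Laurent expansions \eqref{eq:WX1} and \eqref{eq:WL1} after applying the substitutions. Both sides are sums indexed by the primitive normal vectors $v_1, \ldots, v_m$ of the moment polytope, so it suffices to match the monomials and their coefficients under $t_i = z_i \cdot e^{-\langle u, v_i\rangle}$ and $T^{2\pi} = e^{-1}$.

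First I would recall the geometric input, due to Cho--Oh, that enters $W(\bL)$. For a Lagrangian torus fiber $\bL = L(u)$ in a toric Fano manifold, the only Maslov-index-two holomorphic disc classes are the basic classes $\beta_1, \ldots, \beta_m$, one for each facet of $P$, each with $n_{\beta_i}(\bL) = 1$, symplectic area $\omega(\beta_i) = 2\pi(\langle u, v_i\rangle - \lambda_i)$, and boundary $\partial \beta_i = \sum_{j=1}^n v_{i,j} E_j \in H_1(\bL,\Z)$. Plugging these into Definition \ref{def:W} with $z_j := \rho^b(E_j)$ yields exactly the expression $W(\bL) = \sum_{i=1}^m c_i\, z^{v_i}$ with $c_i = T^{2\pi(\langle u,v_i\rangle - \lambda_i)}$ recorded in \eqref{eq:WL1}.

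Next I would perform the two substitutions. Setting $T^{2\pi} = e^{-1}$ converts $c_i$ into $e^{-(\langle u,v_i\rangle - \lambda_i)}$. For the $t$-variables, the assumption that $v_1, \ldots, v_n$ is an integral basis means every $v_i$ expands as $v_i = \sum_{j=1}^n v_{i,j}\, v_j$, and pairing with $u$ gives $\langle u, v_i\rangle = \sum_{j=1}^n v_{i,j}\, \langle u, v_j\rangle$. Therefore
\begin{equation*}
t^{v_i} \;=\; \prod_{j=1}^n t_j^{v_{i,j}} \;=\; \prod_{j=1}^n \bigl(z_j\, e^{-\langle u, v_j\rangle}\bigr)^{v_{i,j}} \;=\; z^{v_i}\cdot e^{-\langle u, v_i\rangle}.
\end{equation*}
Consequently $W(X) = \sum_{i=1}^m e^{\lambda_i}\, t^{v_i} = \sum_{i=1}^m e^{-(\langle u, v_i\rangle - \lambda_i)}\, z^{v_i}$, which agrees with $W(\bL)$ term by term.

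The argument is essentially a bookkeeping identity rather than a serious theorem; there is no real obstacle. The only point requiring care is the exponent convention for $t^{v_i}$, where one must use that the structure constants $v_{i,j}$ appearing in the monomial are precisely the coefficients in the expansion $v_i = \sum_j v_{i,j}\,v_j$ relative to the chosen integral basis, so that the pairing $\langle u, v_i\rangle$ and the holonomy $\rho^b(\partial \beta_i) = z^{v_i}$ transform compatibly under the change of variables.
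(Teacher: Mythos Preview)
Your proof is correct and takes essentially the same approach as the paper, which simply states the lemma as an immediate consequence of comparing \eqref{eq:WX1} and \eqref{eq:WL1}; you have just filled in the routine bookkeeping that the paper omits.
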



\subsection{The matrix factorizations}\label{subsec:MFtoricfano1}
Let $X$ be a compact toric Fano manifold of dimension $n$  defined by a fan supported in $N_\R$ and fix a reference torus fiber $\bL$. In this section we compute the mirror matrix factorization of $\uL$ induced by the mirror functor given in Section \ref{sec:3} using pearl complexes (Section \ref{sec:pearl}), where $\uL$ is a torus fiber together with a fixed flat line bundle $\cL_\uz$ with a $\C^\times$ connection $\nabla_\uz$. Only when $\uL$ and $\bL$ are fiber over the same point, the resulting matrix factorization is non-trivial (or otherwise $\uL \cap \bL = \emptyset$), and so from now on we assume this is the case.  Thus $\conn_\uz$ belongs to the mirror space $M_{\C^\times} \cong (\C^\times)^n$,  and corresponds to a certain value $\uz\in (\C^\times)^n$.  We have $m_0^{(\uL,\conn_\uz)} = W_\uz \cdot \one_{\uL}$. 

Let $m$ be the number of rays in the fan. Without loss of generality we assume $E_i := v_i = \partial \beta_i$ for $i=1,\ldots,n$ form a basis of $N$, and this gives a coordinate system $z_i := \Hol_\cL (\partial \beta_i), i=1,\ldots,n$ on $M_{\C^\times} \cong (\C^\times)^n$.  As before, we write $v_i = \partial \beta_i = \sum_{j=1}^n v_{i,j} E_j$ for any $i=1,\ldots,m$ in terms of this basis.  Then $v_{i,j} = \delta_{ij}$ when $i=1,\ldots,n$.  We set $s_{i,j}$ to be the sign of $v_{i,j}$.

We use the basis $E_1,\ldots,E_n$ to identify $\bL$ with the standard torus $\R^n/\Z^n$.  Choose $(a_1,\cdots,a_n) \in (0,1)^n$ satisfying the following condition:
\begin{equation}\label{cond:sij}
\begin{array}{l}
 - \quad a_j /a_k \,\, \mbox{is irrational for all} \,\, j \neq k.\\
 - \quad |v_{i,k}| a_j - |v_{i,j}| a_k > 0 \,\, \mbox{for every} \,\, i=1,\ldots,m \,\, \mbox{and}\,\, j<k\,\, \mbox{with}\,\, s_{i,j}=s_{i,k} \\
 - \quad a_j\,\, \mbox{is sufficiently close to} \,\,0 \,\, \mbox{for each} \,\, j
\end{array}
\end{equation}
  Then take a Morse function $f_{(a_1,\ldots,a_n)}$ whose critical points are in one-to-one correspondence with subsets $I \subset \{1,\ldots,n\}$.  The critical points are denoted by $e_I$ for $I \subset \{1,\ldots,n\}$, and
$f_{(a_1,\ldots,a_n)}$ is taken such that $e_I$ has coordinates $(c_1,\ldots,c_n)$ where $c_i = 0$ when $i\not\in I$ and $c_i = a_i$ when $i \in I$.
We will write $e_i:=e_{\{i\}}$, $e_0:=e_{\phi}$, $e_{top}:= e_{\{1, \cdots, n\}}$ for notational convenience. Also for $I=\{ i_1, \cdots, i_k\}$, we identify $e_I$ with $e_{i_1} \wedge \cdots \wedge e_{i_k}$ with $i_1 < \cdots < i_k$. Using this terminology, we can define various $\Lambda$-linear endomorphisms on $\largewedge^* \underline{\Lambda^n}$ such as
$$e_j \wedge e_I = (-1)^{i_l} e_{I'}, \quad \iota_{e_{i_l}} e_I = (-1)^{l-1} e_{I''}$$
where $I= \{i_1 < \cdots < i_k\}$, $I' = \{j\} \cup I = \{ i_1 < \cdots < i_l < j < i_{l+1} < \cdots < i_k\}$ and $I'' = \{ i_1 < \cdots <i_{l-1} < i_{l+1} <\cdots < i_k\}$. 


The flat connections on $\cL_z$ and $\cL_\uz$ are specified by the values $z$ and $\uz$ and the gauge hypertori $H_i + p$ and $H_i + \up$ in $\bL$, respectively.  The points $p = ([p_1],\ldots,[p_n]), \up = ([\up_1],\ldots,[\up_n]) \in \R^n/\Z^n$ are taken such that $0<p_1,\ldots,p_n <1$, $p_1,\ldots,p_n \ll 1$, $0<\up_1,\ldots,\up_n <1$ and $1-p_1,\ldots,1-p_n \ll 1$.  These choices of gauge of the flat connections are used to fix the mirror matrix factorization.  Certainly we can take other choices, and we will get another matrix factorization which is equivalent to the original one by Lemma \ref{lem:gimf}.


The matrix factorization $(\largewedge^* \underline{\Lambda^n}, d)$ transformed from $(\bL,\conn_\uz)$ is defined by $d:=(-1)^{\rm deg} \delta_{\textnormal{pearl}}$ where $\delta_{\textnormal{pearl}}$ counts pearl trajectories connecting every pair of critical points $e_I, e_J \in \bL$ (weighted by area and holonomy).  One has $d^2 = (W_z - W_\uz) \cdot \Id$.  Recall that
$$\delta_{\textnormal{pearl}} = (\delta_{\textnormal{pearl}})_{1} + (\delta_{\textnormal{pearl}})_{-1} + \cdots + (\delta_{\textnormal{pearl}})_{-(2\lfloor(n+1)/2\rfloor-1)}$$
where $(\delta_{\textnormal{pearl}})_{-(2k-1)}$ takes the form
$$ (\delta_{\textnormal{pearl}})_{-(2k-1)} : e_I \mapsto \sum_{\substack{|J| = |I| - (2k-1) \\\Gamma(e_I \to e_J)}} \left( \textrm{sign}(\Gamma(e_I \to e_J)) \,\, T^{-\frac{1}{2\pi} \sum_i \int_{D_i} \omega} \,\, \Hol(\Gamma(e_I \to e_J)) \right) e_J.$$
Here, $\Gamma(e_I \to e_J)$ is a pearl trajectory from $e_I$ to $e_J$ with holomorphic disc components $D_i$ with total Maslov index $\sum_i \mu(D_i) = 2k$, and $\Hol$ denotes the holonomy.  

\begin{remark}
Note that the above expression for $(\delta_{\textnormal{pearl}})_{-(2k-1)}$ is a finite sum, since for a toric Fano manifold a non-constant holomorphic disc class has at least Maslov index two.  Thus we can substitute the Novikov formal variable $T$ by $e^{-1}$ and work over complex numbers.
\end{remark}

We shall deduce an explicit formula for $\tilde{d} := (-1)^{\rm deg} (\delta_{\textnormal{pearl}})_{1} + (-1)^{\rm deg} (\delta_{\textnormal{pearl}})_{-1}$ and prove that $\tilde{d}$ itself is a matrix factorization: $\tilde{d}^2 = (W_z - W_\uz) \cdot \Id$.
First we introduce the following terminology for later convenience.

\begin{definition}
Consider a pearl trajectory from a critical point $p$ to another critical point $q$ which only consists of two flow-line components and a disc component $D$.   The point in $\partial D$ that the flow line from $p$ is glued to is called to be an \emph{entry point}, and the point in $\partial D$ that is glued to the flow line to $q$ is called to be an \emph{exit point}.
\end{definition}

\begin{lemma}\label{lem:pearld1}
\begin{equation}\label{eq:d1pearltoric}
d_{1}=(-1)^{\deg} (\delta_{\textnormal{pearl}})_{1} = \sum_{i=1}^m (z_i - \uz_i) e_i \wedge (\cdot).
\end{equation}
\end{lemma}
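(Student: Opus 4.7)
The plan is to identify $(\delta_{\textnormal{pearl}})_{1}$ with the holonomy-twisted Morse differential, since by the decomposition \eqref{eq:decompdpearl} this summand is contributed only by pearl trajectories whose total Maslov index vanishes, i.e.\ pearls with no disc components. Thus $(\delta_{\textnormal{pearl}})_{1}$ counts (signed, holonomy-weighted) gradient flow lines of $-\nabla f_{(a_1,\ldots,a_n)}$ between critical points whose Morse indices differ by one. The pure Morse part is unambiguous: no non-constant disc appears, so no $T$-weight contributes and the matrix coefficients are Laurent polynomials in $z_j,\uz_j$ alone.

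First I would read off the combinatorics of Morse trajectories for $f_{(a_1,\ldots,a_n)}$. The critical point $e_I$ is placed at the point whose $j$-th coordinate is $a_j$ for $j \in I$ and $0$ for $j \notin I$, and the Morse index of $e_I$ is $n-|I|$. Because $f_{(a_1,\ldots,a_n)}$ is (up to a diffeomorphism) separated in the circle coordinates, a gradient trajectory from $e_I$ to $e_J$ with Morse index dropping by one forces $J = I \cup \{j\}$ for some $j \notin I$, with the trajectory moving only in the $x_j$ direction. On the $x_j$-circle there are then exactly two such trajectories: $\gamma_j^+$ running directly from $0$ to $a_j$ in the positive direction, and $\gamma_j^-$ running from $0$ to $a_j$ the long way around through $x_j \approx 1$. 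The orientation conventions of the pearl complex force these two trajectories to enter the signed count with opposite signs.

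Next I would compute the holonomies. The gauge hypertorus $H_j+p$ of $\cL_z$ is placed at $x_j = p_j$ with $0<p_j\ll 1$, while the gauge hypertorus $H_j+\up$ of $\cL_\uz$ is placed at $x_j = \up_j$ with $\up_j$ close to $1$, and all $a_j$ are chosen sufficiently small that the hypertori avoid the critical points and that the two families are well separated. It follows that $\gamma_j^+$ crosses exactly one hypertorus of $\cL_z$ (namely $H_j+p$) with signed intersection number $+1$ and none of $\cL_\uz$, while $\gamma_j^-$ crosses exactly one hypertorus of $\cL_\uz$ (namely $H_j+\up$) with signed intersection number $-1$ and none of $\cL_z$. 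Substituting into the formula $(-1)^{a(u)}(\Pal_{\partial_0 u}(\cL_z))^{-1} \Pal_{\partial_1 u}(\cL_\uz)$ and passing through $(-1)^{\deg}$, the contributions from $\gamma_j^+$ and $\gamma_j^-$ combine to $z_j - \uz_j$ as the coefficient of $e_{I\cup\{j\}}$ in $d_1(e_I)$.

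Finally, comparing with the Koszul rule $e_j \wedge e_I$, the sign is exactly the one that packages all $(I,J)$-coefficients into the closed form $\sum_{j=1}^{n}(z_j-\uz_j)\, e_j \wedge (\cdot)$, with the tacit understanding that the upper limit in the stated formula is $n$ rather than $m$ (the extra primitive generators $v_i$ with $i>n$ contribute only to disc-bearing pearl trajectories and hence to $d_{-1}$, not $d_1$). The only delicate point in the argument is sign bookkeeping: one must simultaneously track the orientation sign of each Morse trajectory, the signed intersection with the oriented hypertori (using the convention that $E_i \cap H_i$ is positive), and the overall $(-1)^{\deg}$ twist. Once the chosen placement of hypertori is fixed as above, the two flow lines on each circle contribute with exactly opposite signs to the two gauges, and the formula follows directly.
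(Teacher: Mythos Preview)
Your proposal is correct and follows essentially the same approach as the paper: identify $(\delta_{\textnormal{pearl}})_1$ with the holonomy-twisted Morse differential, observe that for each $j\notin I$ there are exactly two flow lines from $e_I$ to $e_{I\cup\{j\}}$, one crossing $H_j+p$ and the other $H_j+\up$, and then defer the detailed sign bookkeeping (which the paper postpones to its Appendix~\ref{subsec:signpearld1}). Your observation that the upper summation index should be $n$ rather than $m$ is correct and worth flagging; the extra generators $v_{n+1},\ldots,v_m$ enter only through disc classes and hence only in $d_{-1}$.
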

\begin{proof}
Given a critical point $e_I$, $(\delta_{\textnormal{pearl}})_1 \cdot e_I$ is a linear combination of $e_{I\cup \{j\}}$'s for $j \not\in I$ whose coefficients count flow lines from $e_I$ to $e_{\{j\}\cup I}$'s, which is standard in Morse homology theory.  There are two such flow lines, one passing through the gauge torus $H_j + p$ and one passing through $H_j + \up$.  Since both of them are positive intersections (as holonomies of pearl trajectories), one contributes $z_i$ and one contributes $\uz_i$. What it remains to check that the signs of $z_i$ and $\uz_i$ are given precisely as in \eqref{eq:d1pearltoric}, which we postpone to Section \ref{subsec:signpearld1}.
\end{proof}

Before computing $d_{-1} = (-1)^{deg} (\delta_{\textnormal{pearl}})_{-1}$, we show that there does not exist a pearl trajectory from $e_I$ to $e_J$ with $|J|= |I|-1$, but $J \nsubseteq I$. 
The question is equivalent to ask when there exists a Maslov index two disc which connects $W^u (e_I)$ and $W^s (e_J)$, and the following lemma proves that this is impossible unless $J \subset I$.

\begin{lemma}\label{lem:nosiglepearl}
If $J$ is not a subset of $I$, then there does not exist a trajectories with a single pearl from $e_I$ to $e_J$.
\end{lemma}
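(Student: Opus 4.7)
The strategy is to combine the product-type structure of the Morse function $f_{(a_1,\ldots,a_n)}$ near its critical points with the $T^n$-equivariance of Maslov-index-two discs in a toric Fano. For a single pearl, the disc contributes exactly Maslov index two, so the standard dimension formula for pearl trajectories forces $|J|=|I|-1$. Combined with $J\not\subset I$, this gives $|J\setminus I|\geq 1$ and hence
\[
|I\setminus J|=|J\setminus I|+|I|-|J|\geq 2.
\]
I will extract two independent constraints on the $a_i$'s indexed by $I\setminus J$ and combine them with conditions (i) and (iii) of \eqref{cond:sij} to arrive at the contradiction $a_{i_1}/a_{i_2}\in\Q$.

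First I would analyze the Morse-theoretic side. As in the $\bP^2$ example of Section \ref{P2}, $f_{(a_1,\ldots,a_n)}$ restricts in each coordinate direction to a Morse function on $\R/\Z$ having a maximum at $0$ and a minimum at $a_i$; the resulting product structure yields
\[
W^u(e_I)=\{x\in\bL:x_i=a_i\text{ for every } i\in I\},\qquad W^s(e_J)=\{x\in\bL:x_j=0\text{ for every } j\notin J\}.
\]
Hence for every $i\in I\setminus J$ the entry point of the disc has $i$-th coordinate $a_i$ while its exit point has $i$-th coordinate $0$, so the displacement along the disc boundary in the $i$-th direction is congruent to $-a_i$ modulo $1$.

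Next I would use the toric structure of the disc. By the Cho-Oh classification, every Maslov-index-two holomorphic disc in class $\beta_r$ is a $T^n$-translate and Möbius reparametrization of a standard Blaschke disc; consequently its boundary $\partial u:S^1\to\bL$ lifts to $\R^n$ as a straight line in the direction $v_r$. In particular, the lifted displacement from entry to exit is $\alpha v_r$ for some $\alpha\in\R$. Combined with the previous step this gives, for every $i\in I\setminus J$,
\[
\alpha v_{r,i} = k_i - a_i \qquad\text{for some } k_i\in\Z.
\]
If $v_{r,i}=0$ for some such $i$, then $a_i=k_i\in\Z$, contradicting $a_i\in(0,1)$; so $v_{r,i}\neq 0$ for all $i\in I\setminus J$.

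Finally, pick two distinct $i_1,i_2\in I\setminus J$ and eliminate $\alpha$ from the two corresponding equations to obtain
\[
v_{r,i_2}k_{i_1}-v_{r,i_1}k_{i_2} = v_{r,i_2}a_{i_1}-v_{r,i_1}a_{i_2}.
\]
The left-hand side is an integer; condition (iii) of \eqref{cond:sij} (that the $a_i$'s are sufficiently small) makes the right-hand side have absolute value strictly less than $1$, forcing both sides to vanish. But then $a_{i_1}/a_{i_2}=v_{r,i_1}/v_{r,i_2}\in\Q$, which contradicts condition (i). The main obstacle in this plan is the toric-equivariance step: one has to know that the boundary of \emph{every} Maslov-two disc (not just the standard Blaschke representative) lifts to a straight line in direction $v_r$, and this rests on the Cho-Oh classification of Maslov-two discs with boundary on a toric fiber. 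Once that structural input is in place, the remaining steps are elementary arithmetic.
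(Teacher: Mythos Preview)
Your proof is correct and follows essentially the same strategy as the paper: both arguments identify coordinates $i\in I\setminus J$ where the entry point sits at $a_i$ and the exit point at $0$, then use that the boundary of a Maslov-two toric disc is a straight line in an integral direction $v_r$ (via Cho--Oh) to derive a rationality constraint contradicting condition~(i) of \eqref{cond:sij}. The paper normalizes to $I=\{1,\ldots,k\}$, $J=\{l+1,\ldots,l+k-1\}$ and phrases the contradiction as ``the vector $(a_1,a_2,\ldots)\bmod\Z^n$ cannot be made integral by scalar multiplication''; your version is more explicit, eliminating the parameter $\alpha$ from two equations and invoking the smallness condition~(iii) to force the resulting integer to vanish. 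This added care is useful, since without~(iii) the bare irrationality of $a_1/a_2$ does not immediately rule out $v_{r,2}a_1 - v_{r,1}a_2\in\Z\setminus\{0\}$. One small wording issue: ``for a single pearl, the disc contributes exactly Maslov index two'' is not true in general---a single pearl can carry higher Maslov index---but in the ambient context (computing $d_{-1}$, so rigid trajectories with $|J|=|I|-1$) the Maslov index is indeed forced to be two, so this does not affect the argument.
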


\begin{proof}
Without loss of generality, one may assume that
$I= \{1,2, \cdots, k\}$ and $J= \{ l+1, l+2, \cdots, l+ (k-1)\}$ for $l \geq 2$.
Recall that we have taken our Morse function such that
$$e_{top} = (a_1, \cdots, a_n)$$
with an irrational slope $a_i /a_j$ for $1 \leq i \neq j \leq n$. In terms of coordinates of the Lagrangian torus, $W^u (e_I)$ and $W^s (e_J)$ are given modulo $\Z^n$ as 
$$ W^u (e_I) = \{ (a_1, \cdots, a_k, t_1, \cdots, t_{n-k} \,\, | \,\,  0\leq t_i \leq a_{i} \,\, \mbox{for}\,\,  k+1 \leq i \leq n   \}$$
$$ W^s (e_J) = \{ (0,\cdots, 0 , s_{l+1}, \cdots, s_{l+(k-1)}, 0, \cdots,  0 \,\, | \,\,  0\leq s_i \leq a_{i} \,\, \mbox{for}\,\,  l+1 \leq i \leq l+(k-1)   \}.$$

Now suppose there is a Maslov index two disc whose boundary image connects $W^u (e_I)$ and $W^s (e_J)$. Note that for toric manifolds, such a boundary image has an integral direction (which is normal to a facet of the moment polytope). However, if there is a vector from $W^u (e_I)$ to $W^s (e_J)$, then it should be of the form
$$ (a_1, a_2, \cdots , \cdots) \mod \Z^n$$
which can not be made integral by a multiplication of any scalar because $a_1 / a_2$ is irrational. This gives a contradiction.
\end{proof}

\begin{lemma}\label{lem:pearld2}
$$d_{-1}=(-1)^{\deg} (\delta_{\textnormal{pearl}})_{-1} = \sum_{j=1}^n \left( \sum_{i=1}^m c_i  \alpha^i_j \right) \iota_{e_j}$$
where $c_i = \int_{\beta_i} \omega$ and $\alpha^i_j$ takes the form
$$ \alpha^i_j = s_{i,j} \sum_{\gamma_j \cup D_i} Z_{\gamma_j} \cdot Z_{\partial D_i}$$
for $s_{i,j} = \mbox{the sign of} \,\, v_{i,j}$.
The sum is over the finitely many flow-disc trajectories from $e_j$ to $1$, where the flow part $\gamma_j$ is a segment of a flow line from $e_j$ to $e_{top}$ and the disc part $D_i$ is the basic disc representing $\beta_i$ passing through the point $1$ once.  $Z_{\gamma_j} \in \C^\times$ and $Z_{\partial D_i} \in \C^\times$ denote the holonomy contributions from $\gamma_j$ and $\partial D_i$ respectively.  Moreover $\alpha^i_j = \delta^i_j$ when $i=1,\ldots,n$.
\end{lemma}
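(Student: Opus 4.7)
The strategy has three steps, corresponding to the three structural claims in the statement.

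First, I would establish the wedge-contraction shape, i.e.\ that $d_{-1}(e_I)$ is a linear combination of $e_{I\setminus\{j\}}$ for $j\in I$. By the dimension count recalled in \eqref{eq:decompdpearl}, the operator $(\delta_{\textnormal{pearl}})_{-1}$ counts precisely those pearl trajectories whose total Maslov index is $2$, i.e.\ those with exactly one basic disc component $D_i$ and possibly some Morse flows. Lemma \ref{lem:nosiglepearl} then forces the outgoing critical point $e_J$ to satisfy $J\subset I$, so $J=I\setminus\{j\}$ for some $j\in I$. Combined with the sign twist $(-1)^{\deg}$, this produces the $\iota_{e_j}$ pattern, and the Novikov weight $c_i=T^{\omega(\beta_i)}$ appears from the single disc component.

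Second, I would reduce the coefficient in front of $\iota_{e_j}$ to a count performed only at the ``model'' pair $(e_j,1)$. The key input is that the Morse function on $\bL$ has been chosen (following Lemma \ref{lem:trans} and the explicit $\sum_i \sin^2(\pi x_i)\circ h$ recipe) so that $f$ is essentially a product in the coordinates $x_1,\ldots,x_n$, and the critical points $e_I$ differ only in the coordinates indexed by $I$. Consequently $W^u(e_I)$ and $W^s(e_{I\setminus\{j\}})$ agree in every coordinate $k\neq j$ up to a $T^n$-translation, and the only nontrivial transition happens in the $j$-th coordinate. Using the $T^n$-family of basic discs in the class $\beta_i$, a pearl trajectory from $e_I$ to $e_{I\setminus\{j\}}$ through $D_i$ is in signed, area-preserving, holonomy-preserving bijection with a pearl trajectory from $e_j$ to $1$ through a $T^n$-translate of $D_i$ passing through $1$. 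This gives the factorization $c_i\cdot \alpha^i_j$ with $\alpha^i_j$ only depending on $i,j$, and manifests it as the stated sum over flow-disc configurations $\gamma_j\cup D_i$, with sign $s_{i,j}$ accounting for the orientation of $\partial D_i \pitchfork W^s(1)$ along the $j$-direction (which has intrinsic sign $\mathrm{sign}(v_{i,j})$).

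Third, I would verify $\alpha^i_j=\delta^i_j$ for $i\leq n$. When $i\leq n$ we have $v_i=E_i$, so $\partial D_i$ is a single loop in the $E_i$-direction passing through $1$; in the $j$-th coordinate it traces a constant $x_j=0$ when $j\neq i$, and hence never meets the unstable manifold $W^u(e_j)$ in that coordinate, so $\alpha^i_j=0$. When $i=j$, $\partial D_j$ winds once in the $E_j$-direction, so $W^u(e_j)$ hits $\partial D_j$ in exactly one point, which in fact coincides with $e_j$ itself; the incoming flow $\gamma_j$ is therefore the constant flow, and the hypertori crossings along $\partial D_j$ match the canonical choice of gauge so that $Z_{\gamma_j} Z_{\partial D_j}=1$ with sign $s_{j,j}=+1$. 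Thus $\alpha^j_j=1$.

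The main obstacle is the sign and holonomy bookkeeping in the reduction of the second step: one must check that the Koszul sign $(-1)^{\deg}$, the sign of the pearl trajectory from \cite{FOOO,BC}, and the sign of the transverse intersection $W^u(e_I)\cap\partial D_i$ all interact correctly with the $T^n$-translation identification, and that the $\cL_z$ and $\cL_{\underline z}$ holonomies in the coordinates $k\neq j$ cancel in the reduction. These verifications are of the same nature as those postponed to Section \ref{subsec:signpearld1} for $d_1$ and I would similarly relegate them to the sign appendix rather than grind through them in the body of the proof.
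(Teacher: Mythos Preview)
Your first and third steps match the paper's argument, and you are right to defer the sign bookkeeping. The gap is in step two.

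The claim that the reduction from $(e_I,e_{I\setminus\{j\}})$ to $(e_j,1)$ goes through a ``$T^n$-translation'' does not work as stated. The Morse function $f_{(a_1,\ldots,a_n)}$ is not $T^n$-invariant (it has isolated critical points), so neither the flow lines nor the unstable/stable manifolds translate. Concretely, $W^u(e_I)$ is the affine subtorus $\{x_i=a_i:i\in I\}$ of codimension $|I|$ while $W^u(e_j)$ has codimension $1$; they are not related by any translation. Translating only the disc component leaves you with the task of finding \emph{new} Morse flow segments attaching it to $e_j$ and $1$, and nothing in your argument explains why this yields a bijection, let alone one that preserves the holonomy weights $Z_{\gamma}\cdot Z_{\partial D_i}$. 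The assertion that ``holonomies in the coordinates $k\neq j$ cancel'' is not justified either: the entry-flow $\gamma$ for a trajectory out of $e_I$ is constrained to the subtorus $\{x_i=a_i:i\in I\}$, whereas for a trajectory out of $e_j$ it roams in the hypertorus $\{x_j=a_j\}$, so their hypertori crossings in the directions $k\in I\setminus\{j\}$ are genuinely different.

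What the paper does instead is build, for each trajectory $\Gamma_2$ from $e_j$ to $1$, a one-parameter family $\tilde\Gamma^t$ of (degenerate) pearl trajectories from $e_j$ to $e_{I\setminus\{j\}}$: at $t=2$ it is $\Gamma_2$ followed by a fixed Morse flow $F(1\to e_{I\setminus\{j\}})$; at $t=1$ the incoming flow segment breaks through $e_I$, producing a pearl trajectory $\Gamma_1$ from $e_I$ to $e_{I\setminus\{j\}}$ preceded by a fixed flow $F(e_j\to e_I)$. The entry point moves along a straight segment $\alpha(t)$ in the subtorus spanned by $\{E_k:k\in I\setminus\{j\}\}$, and the disc follows by toric translation. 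This cobordism simultaneously gives the bijection and, since the endpoints $e_j,e_{I\setminus\{j\}}$ are fixed and the attached flow lines $F(1\to e_{I\setminus\{j\}})$, $F(e_j\to e_I)$ contribute identical holonomy, shows that $\Gamma_1$ and $\Gamma_2$ carry the same holonomy weight. That continuity argument is the missing ingredient; a bare translation does not provide it.
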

\begin{proof}
Let $e_I$ and $e_J$ be two critical points with $|J| = |I|-1 \geq 0$.  By Lemma \ref{lem:nosiglepearl}, it suffices to consider the case when $J = I-\{j\}$ for some $1 \leq j \leq n$.
In such a case, we prove that there is an one-to-one correspondence between pearl trajectories from $e_I$ to $e_{I-\{j\}}$ and those from $e_j$ to $1$ both of which involves a single Maslov-2 disc of class $\beta_i$.  

A pearl trajectory $\Gamma_1$ from $e_I$ to $e_{I-\{j\}}$ consists of two flow line components and one disc component.  Given such a $\Gamma_1$, we can attach it with any chosen flow line from $e_j$ to $e_I$ and obtain a (degenerate) pearl trajectory $\tilde{\Gamma}_1$ from $e_j$ to $e_{I-\{j\}}$.  A pearl trajectory $\Gamma_2$ from $e_j$ to $1$ consists of two components: a flow component from $e_j$, which is glued to a holomorphic disc component representing $\beta_i$ at the entry point such that the disc boundary passes through $1 \in \bL$ at the exit point.  Given such a $\Gamma_2$, we can attach it with any chosen flow line from $1$ to $e_{I-\{j\}}$ and obtain a (degenerate) pearl trajectory $\tilde{\Gamma}_2$ from $e_j$ to $e_{I-\{j\}}$.

Given $\Gamma_2$, we construct an one-parameter family of pearl trajectories $\tilde{\Gamma}^t$ from $e_j$ to $e_{I-\{j\}}$ for $t\in [1,2]$ such that $\tilde{\Gamma}^2$ is $\Gamma_2$ attached with a flow line $F(1\to e_{I-\{j\}})$ from $1$ to $e_{I-\{j\}}$, and $\tilde{\Gamma}^1$ is a pearl trajectory $\Gamma_1$ from $e_I$ to $e_{I-\{j\}}$ attached with a flow line $F(e_j\to e_I)$ from $e_j$ to $e_I$.  (If this is the situation, we will associate $\Gamma_1$ with $\Gamma_2$.)

The construction is as follows.  The torus $T\rq{} \subset \bL$ passing through the entry point $p \in \bL$ generated by the directions $\{E_k: k \in I - \{j\}\}$ intersects the unstable torus of $e_I$ at a unique point $p\rq{}$ in the unstable torus of $e_j$.  Let $\alpha:[1,2] \to T\rq{}$ be a straight line segment with $\alpha(1) = p\rq{}, \alpha(2)=p$.  For each $t$, we have a flow line from $e_j$ to $\alpha(t)$ (chosen to be continuously depending on $t$) and a unique holomorphic disc representing $\beta_i$ whose boundary passes through $\alpha(t)$.  Moreover as we vary $t$ from $2$ to $1$ the exit point of the disc varies continuously from $1 \in \bL$ to other points in the stable torus of $e_{I-\{j\}}$, and there exists a flow line (continuously depending on $t$) from the exit point to $e_{I-\{j\}}$.  Thus for each $t$ we have a pearl trajectory $\tilde{\Gamma}^t$ from $e_j$ to $e_{I-\{j\}}$ with $\alpha(t)$ to be the entry point.  At $t=1$, since $\alpha(1) = p\rq{}$ lies in the unstable torus of $e_I$, the flow component from $e_j$ to $\alpha(t)$ actually degenerates to union of a flow line $F(e_j\to e_I)$ from $e_j$ to $e_I$ and a flow segment from $e_I$ to $\alpha(t)$.  Thus the pearl trajectory at $t=1$ is a pearl trajectory $\Gamma_1$ from $e_I$ to $e_{I-\{j\}}$ attached with $F(e_j\to e_I)$.

Conversely given $\Gamma_1$, we can construct a one-parameter family of pearl trajectories $\tilde{\Gamma}^t$ with the same property as above in a similar way, and obtain a corresponding pearl trajectory $\Gamma_2$.  The constructions are inverses to each other, and hence give the desired one-to-one correspondence.

Since $\tilde{\Gamma}^t$ is a continuous family of pearl trajectories with fixed input $e_j$ and output $e_{I-\{j\}}$, their boundaries give the same holonomy.  Moreover, the flow lines $F(1\to e_{I-\{j\}})$ and $F(e_j\to e_I)$ give exactly the same holonomy.  This implies $\Gamma_1$ and $\Gamma_2$ give exactly the same holonomy. 

In order to show that $d_{-1}$ is of the form $ \sum_{j=1}^n w_j \iota_{e_j}$ for some Laurent series $w_j$'s, we have to additionally check that the sign difference between $\Gamma_1 (e_I \to e_{I - \{j\}})$ and $\Gamma_2 (e_j \to 1)$ equals that of  $e_I$ and $e_j \wedge e_{I-\{j\}}$ (i.e. $s^\ast$ such that $\iota_{e_j} e_I = s^\ast e_{I - \{j\}}$). Here, we simply assume that they are equal, and postpone the proof of this to Section \ref{subsec:signpearld2}.

Consequently, we only need to consider pearl trajectories from $e_j$ to $1$ whose disc component represents $\beta_i$ in order to compute the coefficients $\alpha^i_j$.  Such pearl trajectories take the form $\gamma_j \cup D_i$ as stated.  If $v_{i,j} = 0$, any disc representing $\beta_i$ passing through $1 \in \bL$ cannot intersect the unstable torus of $e_j$, and hence there is no such pearl trajectory $\gamma_j \cup D_i$ i.e., $\alpha^i_j =0$.  

Suppose $v_{i,j} \not=0$. 
 There are just finitely many pearl trajectories (parametrized by the finitely many entry points), and     the disc component has area $\int_{\beta_i} \omega$ which contributes the factor $c_i$, and the holonomy contribution is $Z_{\gamma_j} \cdot Z_{\partial D_i}$. In Section \ref{subsec:signpearld2}, 
 the sign of this contribution will turn out to be $s_{i,j}$, which is $1$ (or $-1$) when $\partial D_i$ passes through the unstable torus of $e_j$ positively (or negatively).
Note that $\alpha^i_j = \delta^i_j$ for $i=1,\cdots,n$ due to our special choice of the basis $E_i = \partial \beta_i$ for $i=1, \cdots,n$.

Finally, the regularity of pearl trajectories contributing to  $(\delta_{\textnormal{pearl}})_{-1}$ will be shown in Section \ref{subsec:regularitypearls}. We will also prove in Lemma \ref{lem:Jreg1} that there is an almost complex structure which makes all trajectories for $\delta_{\textnormal{pearl}}$ regular, but does not changes $(\delta_{\textnormal{pearl}})_{-1}$, which justifies our computations in this section.
\end{proof}

Combining the above two lemmas,
$$ \tilde{d} = d_{1} + d_{-1} = \sum_{i=1}^m (z_i - \uz_i) e_i \wedge (\cdot) + \sum_{i=1}^m c_i \sum_{j=1}^n \alpha^i_j \iota_{e_j}. $$
We next prove that $(\largewedge^* \underline{\Lambda^n},\tilde{d})$ indeed forms a matrix factorization:
\begin{theorem} \label{thm:square=W}
$\tilde{d}^2 = W_z - W_{\uz}$.
\end{theorem}
\begin{proof}
We have
$$ \tilde{d}^2 = \sum_{i=1}^n c_i (z_i - \uz_i) + \sum_{i=n+1}^m c_i \sum_{j=1}^n \alpha^i_j (z_j - \uz_j) $$
and, comparing with \eqref{eq:WL1}, we need to prove that $\sum_{j=1}^n \alpha^i_j (z_j - \uz_j) = z^{v_i} - \uz^{v_i}$ where
\begin{equation} \label{eq:sum-entry}
\begin{aligned}
\sum_{j=1}^n \alpha^i_j (z_j - \uz_j) &= \sum_j \sum_{\gamma_j \cup D_i} Z_{\gamma_j} \cdot Z_{\partial D_i} (s_{i,j} (z_j - \uz_j))\\
&= \sum_{\textrm{entry point } \zeta} Z^\zeta_{\gamma_{j(\zeta)}} \cdot Z^\zeta_{\partial D_i} (s_{i,j(\zeta)} (z_{j(\zeta)} - \uz_{j(\zeta)})).
\end{aligned}
\end{equation}
Here, the summation is over all the entry points $\zeta \in \partial D_i$, and we order the entry points counterclockwisely.  The entry point counterclockwisely closest to $1 \in \partial D \subset T$ is said to be the first entry point.  Each entry point $\zeta$ is an intersection of a flow line from $e_{j(\zeta)}$ and $\partial D_i$ (we will write $j=j(\zeta)$ for simplicity).  Each summand $Z_{\gamma_j} \cdot Z_{\partial D_i} (s_{i,j} (z_j - \uz_j))$ is a difference of two terms.  We want to prove that for every two adjacent summands, the second term of the first one cancels with the first term of the second one.  Hence only the first term of the first summand and the last term of the last summand remain, and we claim that those equal to $z^{v_i}$ and $\uz^{v_i}$ respectively.  This will finish the proof that $d^2 = W_z - W_{\uz}$.

Consider two consecutive entry points $\zeta_1,\zeta_2 \in \partial D$, where $\zeta_1$ is flowing from $e_j$ and $\zeta_2$ is flowing from $e_k$ ($j$ may equal to $k$).  The unstable submanifolds of $e_j$ and $e_k$ are two hypertori intersecting each other along a sub-tori $T^{\perp} = T\langle\{E_1,\ldots,E_n\} - \{E_j,E_k\}\rangle$ passing through $e_{top}$.  (Here we use $E_1,\ldots,E_n$ to denote the standard basis of $\R^n$ and $e_1,\ldots,e_n$ are critical points in $T$ with index $n-1$.)  Write the holonomy terms $Z^{\zeta_1}_{\gamma_j} \cdot Z^{\zeta_1}_{\partial D_i}$ and $Z^{\zeta_2}_{\gamma_k} \cdot Z^{\zeta_2}_{\partial D_i}$ (which are monomials in $z_l^{\pm 1}$'s and $\uz_l^{\pm 1}$'s) as products of two factors $Z^{jk}_{\zeta_1} Z^\perp_{\zeta_1}$ and $Z^{jk}_{\zeta_2} Z^\perp_{\zeta_2}$ respectively, where $Z^{jk}_{p_l}$ only has variables $z_j,z_k,\uz_j,\uz_k$ and $Z^\perp_{p_l}$ has variables $z_a,\uz_a$'s for all $a \not= j,k$.  Then $Z^\perp := Z^\perp_{\zeta_1} = Z^\perp_{\zeta_2}$, and we only need to compare $Z^{jk}_{\zeta_1}$ and $Z^{jk}_{\zeta_2}$ in order to compute
$$ Z^{\zeta_1}_{\gamma_j} Z^{\zeta_1}_{\partial D_i} (s_{i,j} (z_j - \uz_j)) + Z^{\zeta_2}_{\gamma_k} Z^{\zeta_2}_{\partial D_i} (s_{i,k} (z_k - \uz_k)) =  Z^\perp (Z^{jk}_{\zeta_1}(s_{i,j} (z_j - \uz_j)) + Z^{jk}_{\zeta_2} (s_{i,k} (z_k - \uz_k))).$$

To analyze $Z^{jk}_{\zeta_1}$ and $Z^{jk}_{\zeta_2}$, we can project $\gamma_j, \gamma_k, \partial D_j, \partial D_k \subset T$ to $T / T^{\perp}$, which is one-dimensional when $j=k$ and two-dimensional when $j\not=k$.  When $j=k$, we choose another direction $E_l$ other than $E_j$ and project to $T/(T^{\perp}/T\langle E_l \rangle)$instead.  Thus in any case the computation goes back to dimension two.

First consider the case $j=k$.  $Z^{jk}_{\zeta_1}$ and $Z^{jk}_{\zeta_2}$ are monomials in $z_j$ and $\uz_j$.  See Figure \ref{MF-cancel-parallel}, where the flow lines are shown by dotted lines and $\partial D_i$ is shown by a solid line.  In this case $\partial D$ passes through the gauge hypertori $p+H_j$ and $\up+H_j$ once in between the two entry points $\zeta_1$ and $\zeta_2$.  For $s_{i,j}=\mbox{the sign of} \,\,v_{i,j}=1$, $Z^{jk}_{\zeta_1}$ has one more factor of $z_j$ and one less factor of $\uz_j$ than $Z^{jk}_{\zeta_2}$.  Thus $\uz_j Z^{jk}_{\zeta_1} = z_j Z^{jk}_{\zeta_2}$.  This shows that the two terms in the middle cancel each other and only the first and last ones are left:
\begin{equation} \label{eq:cancel}
Z^{\zeta_1}_{\gamma_j} Z^{\zeta_1}_{\partial D_i} (s_{i,j} (z_j - \uz_j)) + Z^{\zeta_2}_{\gamma_k} Z^{\zeta_2}_{\partial D_i} (s_{i,k} (z_k - \uz_k)) = Z^{\zeta_1}_{\gamma_j} Z^{\zeta_1}_{\partial D_i} s_{i,j} z_j -  Z^{\zeta_2}_{\gamma_k} Z^{\zeta_2}_{\partial D_i} s_{i,k} \uz_k.
\end{equation}
For $s_{i,j}=-1$, $Z^{jk}_{\zeta_1}$ has one more factor of $z^{-1}_j$ and one less factor of $\uz^{-1}_j$ than $Z^{jk}_{\zeta_2}$.  Thus $z_j Z^{jk}_{\zeta_1} = \uz_j Z^{jk}_{\zeta_2}$.  This implies the two terms in the middle cancel each other and the same equation holds.

\begin{figure}[htb!]
  \centering
   \begin{subfigure}[b]{0.4\textwidth}
   	\centering
    \includegraphics[width=\textwidth]{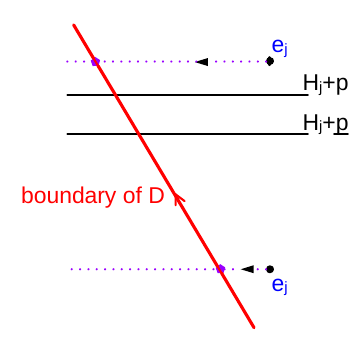}
    \caption{$j=k$.}
    \label{MF-cancel-parallel}
   \end{subfigure}
   \hspace{10pt}
   \begin{subfigure}[b]{0.4\textwidth}
   	\centering
   	 \includegraphics[width=\textwidth]{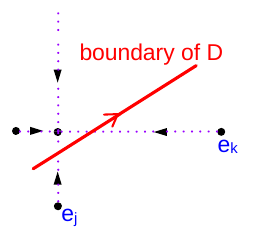}
    \caption{$j\not=k$.}
    \label{MF-cancel-jk}
   \end{subfigure}
\caption{Two consecutive entry points in $\partial D$.}
\label{MF-cancel}
\end{figure}

Now consider the case $j\not=k$.  See Figure \ref{MF-cancel-jk}.  Consider the case $s_{i,j} = s_{i,k} = 1$, and the other three cases $(s_{i,j},s_{i,k})=(1,-1),(-1,1),(-1,-1)$ are similar. $Z^{jk}_{\zeta_1}$ has one more $z_k$ factor than $Z^{jk}_{\zeta_2}$, because the flow segment $\gamma_j$ from $e_j$ hit the gauge hypertorus $H_k + p$ once while the flow segment $\gamma_k$ from $e_k$ does not.  Similarly, $Z^{jk}_{\zeta_2}$ has one more $\uz_j$ factor than $Z^{jk}_{\zeta_2}$ because the flow segment $\gamma_k$ from $e_k$ hit the gauge hypertorus $H_j + \up$ once while the flow segment $\gamma_j$ from $e_j$ does not.  Hence $\uz_j Z^{jk}_{\zeta_1} = z_k Z^{jk}_{\zeta_2}$.  This implies Equation \eqref{eq:cancel} also holds in this case.

In conclusion,  the right hand side of Equation \eqref{eq:sum-entry} equals to $\textrm{first term } - \textrm{ last term}$ because all intermediate terms cancel.  The first term is $Z^{\first}_{\gamma_{j(\first)}} Z^\first_{\partial D_i} z_{j(\first)}$ when $s_{i,j(\first)}=1$ and is $Z^\first_{\gamma_{j(\first)}} Z^\first_{\partial D_i} \uz_{j(\first)}$ when $s_{i,j(\first)}=-1$ where $\first$ is the first entry point.  The last term is $Z^\last_{\gamma_{j(\last)}} Z^\last_{\partial D_i} \uz_{j(\last)}$ when $s_{i,j(\last)}=1$ and is $Z^\last_{\gamma_{j(\last)}} Z^\last_{\partial D_i} z_{j(\last)}$ when $s_{i,j(\last)}=-1$ where $\last$ is the last entry point.

Now consider the first term which corresponds to the entry point $\zeta$ anti-clockwisely closest to $1 \in T$ along $\partial D$, and let $j = j(\first)$.  The flow segment from $e_j$ never hits the gauge hypertorus $H_j + p$ nor $H_j + \up$.  For $k\not=j$, it hits the gauge hypertorus $H_k+p$ if and only if $s_{i,k}=1$, and hits $H_k+\up$ if and only if $s_{i,k}=-1$.  Hence
$Z^\first_{\gamma_{j(\first)}} = \prod_{k\not=j} z_k^{\delta(s_{i,k},1)} \uz_k^{\delta(s_{i,k},-1)}$, where $\delta(a,b):=1$ if $a=b$ and zero otherwise.  For the disc component, for every $k$ the arc from $1$ to $\zeta$ (counterclockwisely) hits $H_k + p$ if and only if $s_{i,k}=1$, and hits $H_k+\up$ if and only if $s_{i,k}=-1$.  Also $\partial D$ hits $H_k + p$ and $H_k + \up$ $|v_{i,k}|$ times respectively.  Recall that on the arc from $1$ to $\zeta$ (which is mapped to $\tilde{L}$), only intersection with $H_k + \up$ (but not $H_k + p$) contributes; on the opposite arc from $\zeta$ to $1$ (which is mapped to $\bL$), only intersection with $H_k+p$ contributes.  Thus
$$ Z^\first_{\partial D_i} = \prod_{k=1}^n \left(\uz_k^{-\delta(s_{i,k},-1)} \frac{z^{v_{i,k}}}{z_k^{\delta(s_{i,k},1)}}\right)$$
and so $Z^\first_{\gamma_{j(\first)}} Z^\first_{\partial D_i} = \uz_j^{-\delta(s_{i,j},-1)} z_j^{-\delta(s_{i,j},1)} \prod_{k=1}^n z^{v_{i,k}}$.  Thus the first term is $z^{v_i} = \prod_{k=1}^n z^{v_{i,k}}$.

The derivation of the last term to be $\uz^{v_i}$ is very similar and left to the reader.  This proves $\sum_{j=1}^n \alpha^i_j (z_j - \uz_j) = z^{v_i} - \uz^{v_i}$.
\end{proof}

\subsection{Computation of the main terms}\label{subsec:MFtoricfano2}
We now derive an explicit expression of $\tilde{d}$ by computing the coefficients
$$ \alpha^i_j = s_{i,j} \sum_{\gamma_j \cup D_i} Z_{\gamma_j} \cdot Z_{\partial D_i}. $$
By Lemma \ref{lem:pearld2}, it suffices to compute the holonomy contribution from a single pearl trajectory from $e_j$ to $1$ involving the $D_i$-disc.
$\partial D_i$ is a circle spanned by the direction $v_i$ in $\bL$, which hits the unstable submanifold of $e_j$ $|v_{i,j}|$ number of times.  Hence there are in total $|v_{i,j}|$ number of pearl trajectories, which are parametrized by the entry points along $\partial D_i$.


\begin{figure}[htb!]
  \centering
   \begin{subfigure}[b]{0.45\textwidth}
   	\centering
    \includegraphics[width=\textwidth]{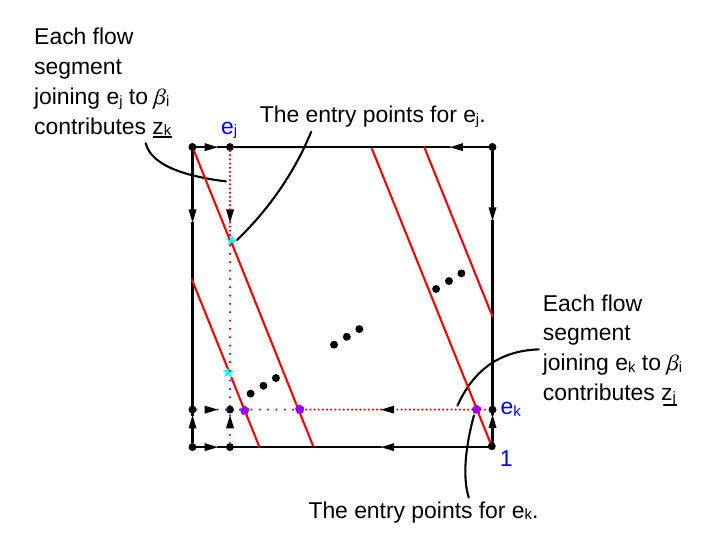}
    \caption{The case when $\{s_{i,j},s_{i,k}\} = \{1,-1\}$.}
    \label{MF-jk-plane1}
   \end{subfigure}
   \begin{subfigure}[b]{0.45\textwidth}
   	\centering
   	 \includegraphics[width=\textwidth]{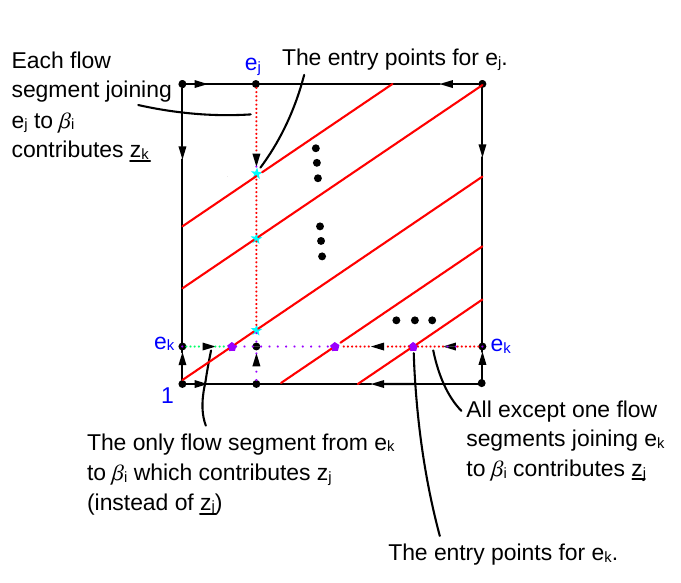}
    \caption{The case when $s_{i,j} = s_{i,k} \not=0$.}
    \label{MF-jk-plane2}
   \end{subfigure}
\caption{The $jk$-plane, $j<k$.}
\label{MF-jk-plane}
\end{figure}

First let's analyze the holonomy contribution $Z_{\gamma_j}$ from the flow segment $\gamma_j$.  It is easier to do so by projecting to the $jk$-plane as in Figure \ref{MF-jk-plane}, where the assumption \eqref{cond:sij} on the slope of $\partial D_i$ and $a_j / a_k$ is used.  Since the flow is contained in a hypertorus normal to $e_j$, it never hits the gauge hypertori $p + H_j$ and $\up + H_j$.  Now consider gauge hypertori $p + H_k$ and $\up + H_k$ for $k\not=j$.  When $s_{i,k}=0$, the whole pearl trajectory is contained in the hypertorus containing $1 \in T$ parallel to $p + H_k$, and hence it never hits the gauge hypertori $p + H_k$ and $\up + H_k$.  When $s_{i,k} \not= 0$ and $s_{i,k} \not= s_{i,j}$, the flow segment hits the gauge hypertorus $\up + H_k$ but not $p + H_k$ in the negative transverse orientation (see Figure \ref{MF-jk-plane1}).  It contributes $\uz_k$ to the holonomy.  When $s_{i,k} = s_{i,j} \not=0$, we further divide into two cases: $j<k$ and $j>k$ (see Figure \ref{MF-jk-plane2}).  When $j<k$, the flow segment hits the gauge hypertorus $\up + H_k$ but not $p + H_k$ in the negative transverse orientation as in the previous case; it contributes $\uz_k$ to the holonomy.  When $j>k$, all but one flow-disc configurations have their flow segments hitting $\up + H_k$ but not $p + H_k$ in the negative transverse orientation, which contributes $\uz_k$ to the holonomy; the exceptional flow-disc configuration has its flow segment hitting $p + H_k$ but not $\up + H_k$ in the positive transverse orientation, contributing $z_k$ to the holonomy.

Second, consider the holonomy contribution $Z_{\partial D_i}$ from the disc component $D_i$, which is the basic holomorphic disc whose boundary passes through the critical point $1$ whose class is $\beta_i$.  The holonomy is computed by considering the intersection points of $\partial D$ with the gauge hypertori $H_l+p$ and $H_l + \up$ for $l=1,\ldots,n$.  When $s_{i,l} = 0$, there is no intersection with $H_l+p$ nor $H_l + \up$.  When $s_{i,l} > 0$, $\partial D_i$ passes through $H_l+p$ (or $H_l+\up$) in the positive transverse direction, and so each of the intersections is marked by $z_l$ (or $\uz_l$ resp.); otherwise when $s_{i,l}<0$, each of the intersections is marked by $z_l^{-1}$ (or $\uz_l^{-1}$ resp.).  The number of points on $\partial D_i$ marked as $z_l^\pm$ is the same as that marked as $\uz_l^\pm$, which is $|v_{i,l}|$.  Figure \ref{MF-nd-circ} shows the disc component $D_i$.  Walking along the circle in positive orientation starting from $1$, if $s_{i,l}=1$ then we first encounter $H_l+p$, marked as $z_l$, and later $H_l+\up$, marked as $\uz_l$; if $s_{i,l}=-1$, then we first encounter $\uz_l^{-1}$ and later $z_l^{-1}$.

\begin{figure}[htb!]
\begin{center}
\includegraphics[height=3in]{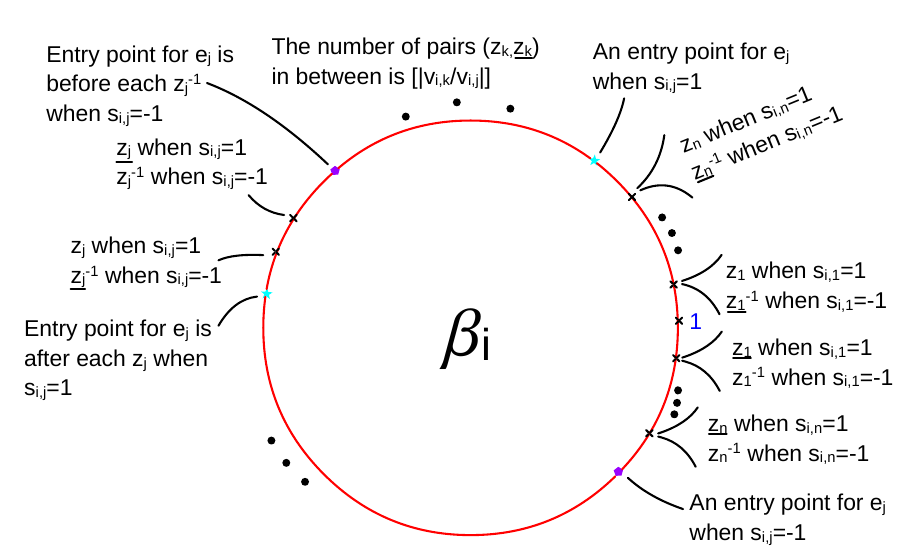}
\caption{The basic holomorphic disc passing through the critical point $1 \in T$ representing $\beta_i$.  When the boundary passes through a gauge hypertorus, there is a holonomy term as marked in the diagram.}\label{MF-nd-circ}
\end{center}
\end{figure}

For each possible entry position, we need to count the number of markings $\uz_k^{\pm 1}$ in the arc of boundary circle counterclockwisely from $1$ to the entry position, and the number of $z_k^{\pm 1}$ in the arc from the entry position to $1$ (which is the part mapped to $\bL$).  There are $\lfloor p |v_{i,k}/v_{i,j}|\rfloor$ pairs of $(z_k^{s_{i,k}},\uz_k^{s_{i,k}})$ between the first and $(p+1)$-th entry positions.  

From now on we assume $s_{i,j} = 1$, and the case $s_{i,j} = -1$ can be analyzed in a similar way.  For $k=j$, the arc from $1$ to the first entry position (counterclockwisely) contains one $z_j$ and no $\uz_j$.  Then the opposite arc from the first entry position to $1$ contains $(|v_{i,j}|-1)$ number of $z_j$'s and $|v_{i,j}|$ number of $\uz_j$'s.  Moreover, the flow segment for such a configuration contributes no $z_j$ nor $\uz_j$.  Thus the holonomy contribution is $z_j^{|v_{i,j}|-1} = z_j^{v_{i,j}-1}$.

For $k<j$ with $s_{i,k} = 1$, the arc from $1$ to the first entry position contains one $z_k$ and no $\uz_k$ (see Figure \ref{MF-jk-plane2} with roles of $j$ and $k$ switched).  Then the opposite arc from the first entry position to $1$ contains $(|v_{i,k}|-1)$ number of $z_k$'s and $|v_{i,k}|$ number of $\uz_k$'s.  Moreover, the flow segment for such a configuration contributes $z_k$.  Thus the holonomy contribution is $z_k^{|v_{i,k}|-1} \cdot z_k = z_k^{v_{i,k}}$.

For $k>j$ with $s_{i,k} = 1$, the arc from $1$ to the first entry position contains one $z_k$ and no $\uz_k$ (see Figure \ref{MF-jk-plane2}).  Then the opposite arc from the first entry position to $1$ contains $(|v_{i,k}|-1)$ number of $z_k$'s and $|v_{i,k}|$ number of $\uz_k$'s.  Moreover, the flow segment for such a configuration contributes $\uz_k$.  Thus the holonomy contribution is $z_k^{|v_{i,k}|-1} \cdot \uz_k = z_k^{v_{i,k}-1} \uz_k$.

For $k\not=j$ with $s_{i,k} = -1$, the arc from $1$ to the first entry position contains one $\uz_k^{-1}$ and no $z_k^{-1}$ (see Figure \ref{MF-jk-plane1}).  Then the opposite arc from the first entry position to $1$ contains $|v_{i,k}|$ number of $z_k^{-1}$'s and $(|v_{i,k}|-1)$ number of $\uz_k^{-1}$'s.  Moreover, the flow segment for such a configuration contributes $\uz_k$.  Thus the holonomy contribution is $z_k^{v_{i,k}} \cdot \uz_k^{-1} \cdot \uz_k = z_k^{v_{i,k}}$.

Multiplying all holonomies for $k=1,\ldots,n$ together, the total holonomy of the configuration corresponding to the first entry position is
$$z_1^{v_{i,1}} \ldots z_{n}^{v_{i,n}}z_j^{-1} \left(\prod_{k\not=j} z_k^{-\delta(s_{i,k},1)} \right) \left( \prod_{k\not=j} \uz_k^{-\delta(s_{i,k},-1)} \right) \left(\prod_{k>j} \uz_k^{|s_{i,k}|} \right) \left(\prod_{k<j} \uz_k^{\delta(s_{i,k},-1)}z_k^{\delta(s_{i,k},1)} \right).$$
The factor $z_1^{v_{i,1}} \ldots z_{n}^{v_{i,n}}z_j^{-1} \left(\prod_{k\not=j} z_k^{-\delta(s_{i,k},1)} \right)$ comes from the arc from first entry position to $1$; the factor $\left( \prod_{k\not=j} \uz_k^{-\delta(s_{i,k},-1)} \right)$ comes from the arc from $1$ to the first entry position; the remaining factor $\left(\prod_{k>j} \uz_k^{|s_{i,k}|} \right) \left(\prod_{k<j} \uz_k^{\delta(s_{i,k},-1)}z_k^{\delta(s_{i,k},1)} \right)$ comes from the flow segment.

Now we compute holonomies of configurations corresponding to the $(p+1)$-th entry position, $p=1,\ldots,v_{i,j}-1$.  For $k=j$ or $s_{i,k} = 0$, the flow segment contributes nothing; otherwise the flow segment always contributes $\uz_k$.  Thus the holonomy contribution of the flow segment is
$$ \prod_{k\not=j} \uz_k^{|s_{i,k}|}. $$
For the arc from $1$ to the $(p+1)$-th entry position, the number of $\uz_k^{s_{ik}}$'s is $\lfloor p |v_{i,k}/v_{i,j}|\rfloor$ plus that for the arc from $1$ to the first entry position.  Thus the holonomy contribution of the arc from $1$ to the $(p+1)$-th entry position is
$$\left(\prod_{k=1}^n \uz_k^{s_{i,k} \lfloor p |v_{i,k}/v_{i,j}|\rfloor}\right) \left( \prod_{k\not=j} \uz_k^{-\delta(s_{i,k},-1)} \right).$$
Similarly the number of $z_k^{s_{i,k}}$'s for the arc from the $(p+1)$-th entry position to $1$ is $\lfloor p |v_{i,k}/v_{i,j}|\rfloor$ less than that for the arc from the first entry position to $1$.   Thus the holonomy contribution of the arc from $1$ to the $(p+1)$-th entry position is
$$ \left(\prod_{k=1}^n z_k^{-s_{i,k} \lfloor p |v_{i,k}/v_{i,j}|\rfloor}\right) z_1^{v_{i,1}} \ldots z_{n}^{v_{i,n}}z_j^{-1} \left(\prod_{k\not=j} z_k^{-\delta(s_{i,k},1)} \right).$$
Hence the total holonomy from the flow segment and the disc boundary is
$$z_1^{v_{i,1}} \ldots z_{n}^{v_{i,n}}z_j^{-1}\left( \prod_{l\not=j} \uz_l^{-\delta(s_{i,l},-1)} \right)\left( \prod_{l\not=j} z_l^{-\delta(s_{i,l},1)} \right) \left(\prod_{l\not=j} \uz_l^{|s_{i,l}|}\right) \prod_{l=1}^n \left(\frac{\uz_l}{z_l}\right)^{s_{i,l} \left\lfloor p \left|\frac{v_{i,l}}{v_{i,j}}\right| \right\rfloor}.$$

The other case $s_{i,j}=-1$ can be analyzed similarly.  We obtain

\begin{theorem} \label{thm:explicit}
The matrix factorization $(\largewedge^* \underline{\Lambda^n}, \tilde{d})$ is
\begin{equation}
\tilde{d} = \left(\sum_{i=1}^n (z_i - \uz_i) e_i \wedge\right) + \left(\sum_{i=1}^n c_i \iota_{e_i}\right) + \left( \sum_{i=n+1}^m c_i \sum_{j=1}^n \alpha^i_j \iota_{e_j} \right)
\end{equation}
where $\alpha^i_j=0$ when $v_{i,j} = 0$,
\begin{align*}
\alpha^i_j =& z_1^{v_{i,1}} \ldots z_{n}^{v_{i,n}}z_j^{-1}\left( \prod_{l\not=j} \uz_l^{-\delta(s_{i,l},-1)} \right)\left( \prod_{l\not=j} z_l^{-\delta(s_{i,l},1)} \right) \left(\left(\prod_{l>j} \uz_l^{|s_{i,l}|} \right) \left(\prod_{l<j} \uz_l^{\delta(s_{i,l},-1)}z_l^{\delta(s_{i,l},1)} \right) \right. \\
&+ \left.\left(\prod_{l\not=j} \uz_l^{|s_{i,l}|}\right) \sum_{p=1}^{v_{i,j}-1} \left(\frac{\uz_j}{z_j}\right)^p \prod_{l\not=j}\left(\frac{\uz_l}{z_l}\right)^{s_{i,l} \left\lfloor p \left|\frac{v_{i,l}}{v_{i,j}}\right| \right\rfloor} \right)
\end{align*}
when $s_{i,j} = 1$, and
\begin{align*}
\alpha^i_j =& \uz_1^{v_{i,1}} \ldots \uz_{n}^{v_{i,n}}z_j^{-1}\left( \prod_{l\not=j} z_l^{-\delta(s_{i,l},-1)} \right)\left( \prod_{l\not=j} \uz_l^{-\delta(s_{i,l},1)} \right) \left(\left(\prod_{l>j} \uz_l^{|s_{i,l}|} \right) \left(\prod_{l<j} \uz_l^{\delta(s_{i,l},1)}z_l^{\delta(s_{i,l},-1)} \right) \right. \\
&+ \left.\left(\prod_{l\not=j} \uz_l^{|s_{i,l}|}\right) \sum_{p=1}^{|v_{i,j}|-1} \left(\frac{\uz_j}{z_j}\right)^p \prod_{l\not=j}\left(\frac{z_l}{\uz_l}\right)^{s_{i,l} \left\lfloor p \left|\frac{v_{i,l}}{v_{i,j}}\right| \right\rfloor} \right)
\end{align*}
when $s_{i,j} = -1$.
\end{theorem}

As a simple application,  the wedge-contraction type matrix factorization $(\largewedge^* \underline{\Lambda^n}, \tilde{d})$ for $X= \C P^n$ is given as follows:

\begin{corollary}
The matrix factorization $\tilde{d}$ corresponding to the Clifford torus with the holonomy $(\uz_1, \cdots, \uz_n)$ in $\C P^n$ is 
$$\tilde{d} = \sum_{i=1}^n (z_i - \uz_i) e_i \wedge + \sum_{i=1}^n \left(T^k - \frac{T^k}{\uz_1 \cdots \uz_i z_i \cdots z_n}\right) \iota_{e_i}$$
where $k$ is the (common) area of $(n+1)$ Maslov-2 discs bounding the central fiber. 
\end{corollary}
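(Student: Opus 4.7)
The plan is to apply Theorem~\ref{thm:explicit} directly to the Clifford torus. The fan for $\C P^n$ has $n+1$ primitive generators: I take $v_i = E_i$ for $i=1,\ldots,n$ as the standard integral basis of $N = \Z^n$, and $v_{n+1} = -(v_1 + \cdots + v_n) = (-1,\ldots,-1)$. Because the Clifford torus is the fiber over the barycenter of the moment simplex, the $n+1$ basic Maslov-index-two discs $\beta_1,\ldots,\beta_{n+1}$ all have the same symplectic area $k$; hence every $c_i$ appearing in the theorem equals $T^k$.

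For $i=1,\ldots,n$, since $v_i = E_i$, the last sentence of Lemma~\ref{lem:pearld2} gives $\alpha^i_j = \delta^i_j$, which produces the summand $\sum_{i=1}^n T^k \iota_{e_i}$ in $\tilde{d}$. The $(z_i - \uz_i)e_i\wedge$ part is immediate from \eqref{eq:d1pearltoric}. So the only substantive step is to evaluate $\alpha^{n+1}_j$ for each $j$.

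All entries of $v_{n+1}$ are $-1$, so $s_{n+1,l}=-1$ for every $l$ and $|v_{n+1,j}|=1$. Consequently the inner sum $\sum_{p=1}^{|v_{i,j}|-1}$ in the $s_{i,j}=-1$ branch of Theorem~\ref{thm:explicit} is empty, and I am left with the ``first-entry'' monomial to simplify in three pieces. First, the prefactor $\uz_1^{-1}\cdots\uz_n^{-1}\cdot z_j^{-1}\prod_{l\neq j}z_l^{-\delta(s_{n+1,l},-1)}\prod_{l\neq j}\uz_l^{-\delta(s_{n+1,l},1)}$ collapses to $(\uz_1\cdots\uz_n\cdot z_1\cdots z_n)^{-1}$. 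Second, the bracket $\prod_{l>j}\uz_l^{|s_{n+1,l}|}\cdot\prod_{l<j}\uz_l^{\delta(s_{n+1,l},1)}z_l^{\delta(s_{n+1,l},-1)}$ contributes the numerator $\uz_{j+1}\cdots\uz_n\cdot z_1\cdots z_{j-1}$. Third, incorporating the sign $s_{n+1,j}=-1$ from the identity $\alpha^i_j = s_{i,j}\sum Z_{\gamma_j}Z_{\partial D_i}$ of Lemma~\ref{lem:pearld2}, the numerator cancels the factors $\uz_l$ for $l>j$ and $z_l$ for $l<j$ in the denominator, leaving
\[
\alpha^{n+1}_j \;=\; -\,\frac{1}{\uz_1\cdots\uz_j\,z_j\cdots z_n}.
\]
Assembling the contributions from $i=1,\ldots,n$ and from $i=n+1$ yields the formula stated in the corollary.

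As an independent verification I would expand $\tilde{d}^{\,2}$ directly: after telescoping, $\sum_{j=1}^n(z_j-\uz_j)\bigl(T^k-\tfrac{T^k}{\uz_1\cdots\uz_j z_j\cdots z_n}\bigr)$ collapses to $T^k\bigl(z_1+\cdots+z_n+(z_1\cdots z_n)^{-1}\bigr) - T^k\bigl(\uz_1+\cdots+\uz_n+(\uz_1\cdots\uz_n)^{-1}\bigr) = W-W(\uz)$, which is the consistency check required by Theorem~\ref{thm:square=W}. There is no serious obstacle beyond bookkeeping; the only delicate point is tracking the sign $s_{n+1,j}=-1$ and making sure that the cancellations among $\uz_l$-factors and $z_l$-factors are performed consistently so that the result takes the clean symmetric form $(\uz_1\cdots\uz_j z_j\cdots z_n)^{-1}$.
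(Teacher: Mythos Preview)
Your proof is correct and follows exactly the route the paper intends: the corollary is stated in the paper without proof, as a direct specialization of Theorem~\ref{thm:explicit}, and you have carried out that specialization carefully. One small remark: the formulas displayed in Theorem~\ref{thm:explicit} as ``$\alpha^i_j = \ldots$'' in fact record the holonomy sum $\sum Z_{\gamma_j}Z_{\partial D_i}$ rather than $s_{i,j}$ times it (as one sees by comparing with the explicit $\C P^2$ computation in Section~\ref{P2}); you noticed this and correctly inserted the sign $s_{n+1,j}=-1$ from Lemma~\ref{lem:pearld2}, which is what makes the contraction coefficient come out as $-T^k/(\uz_1\cdots\uz_j z_j\cdots z_n)$. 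Your telescoping check of $\tilde d^{\,2}=W-W(\uz)$ is a useful independent confirmation.
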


\subsection{Transversality}\label{subsec:regularitypearls}
In this section, we discuss the regularity of relevant moduli space of pearl trajectories which appeared throughout the section.
We first show that the moduli spaces $\CM_2 (e_I,e_J)$ of pearl trajectories in $X$ from $e_I$ to $e_J$ for $|J|=|I|-1$ are regular.  Because of degree reason, this moduli space consists of pearl trajectories with a unique disc of  Maslov index two.

The moduli of Maslov-$2$ holomorphic discs are known to be regular in toric Fano case by Cho-Oh \cite{CO}.  Also the Morse functions that we have chosen are Morse-Smale and hence, satisfy the transversality condition.  As the moduli space $\CM_2 (e_I,e_J)$ is given by $ev_\beta^{-1} (W^u (e_I) \times W^s (e_J))$ for the unstable manifold $W^u (e_I)$ of $e_I$ and the stable manifold $W^s (e_J)$ of $e_J$, it only remains to prove that the map $ ev_\beta =(ev_1, ev_0) : \mathcal{M}_2 (\bL,J,\beta) \to \bL \times \bL$
is transversal to $W^u (e_I) \times W^s (e_J)$.

\begin{lemma}\label{lem:trans}
With the setting as above,
$ev_\beta :\mathcal{M}_2 (\bL,J,\beta) \to \bL \times \bL$ is transversal to $W^u (e_I) \times W^s (e_J)$.
\end{lemma}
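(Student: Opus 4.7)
The plan is to exploit the Cho--Oh classification of Maslov-index-two holomorphic discs bounded by a moment-map torus fiber $\bL$ in a toric Fano manifold, which shows that any such disc in class $\beta$ is of the form $\beta = \beta_i$ for some facet $i$ and all discs in this class form a single $T^n$-orbit of a standard equivariant disc with great-circle boundary in the direction $v_i = \partial \beta_i$. First I would use this to identify
\begin{equation*}
\CM_2(\bL, J, \beta_i) \;\cong\; \bL \times S^1,
\end{equation*}
with the $\bL$-factor recording $ev_1$ and the $S^1$-factor the angular position of the second marked point on $\partial D^2$. By $T^n$-equivariance, in these coordinates the evaluation map reads $ev_\beta(p, \theta) = (p,\, p + \sigma_i(\theta))$, where $\sigma_i : [0,1] \to \bL$ is a fixed loop representing $v_i$, and therefore
\begin{equation*}
\Image d(ev_\beta)_{(p,\theta)} \;=\; \{(X,\; X + t\, v(\theta)) : X \in T_p\bL,\ t \in \R\} \;\subset\; T\bL \oplus T\bL,
\end{equation*}
with $v(\theta) = \sigma_i'(\theta)$ a nonzero multiple of $v_i$ for every $\theta$.

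With this explicit model in hand, transversality of $ev_\beta$ to $W^u(e_I) \times W^s(e_J)$ at an intersection point $(p, \theta)$ reduces to the linear-algebraic identity
\begin{equation*}
U \,+\, S \,+\, \R\langle v_i\rangle \;=\; T\bL,
\qquad U := T_p W^u(e_I),\ \ S := T_{p+\sigma_i(\theta)} W^s(e_J).
\end{equation*}
By Lemma~\ref{lem:nosiglepearl} the only relevant case is $J = I \setminus \{j\}$ for some $j \in I$, in which $\dim U = n - |I|$ and $\dim S = |I|-1$; hence $\dim(U + S) \leq n - 1$, and the identity above is equivalent to the single non-containment $v_i \notin U + S$.

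To verify this, I would appeal to the product structure of the unstable and stable manifolds of $f_{(a_1, \ldots, a_n)}$ that was already used in the proof of Lemma~\ref{lem:nosiglepearl}: $W^u(e_I)$ is parametrized by the coordinates $\{x_k : k \notin I\}$ and $W^s(e_J)$ by $\{x_k : k \in I\setminus\{j\}\}$, whence at interior points $U = \Span\{E_k : k \notin I\}$, $S = \Span\{E_k : k \in I,\ k \neq j\}$, and $U + S = \bigoplus_{k \neq j} \R E_k$. The non-containment $v_i \notin U + S$ is then equivalent to $v_{i,j} \neq 0$, which is precisely the condition for the single-disc pearl moduli to be non-empty (cf.\ Lemma~\ref{lem:pearld2}); when $v_{i,j} = 0$ the transversality assertion is vacuous. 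The hard part will be to justify that the auxiliary diffeomorphism $h$ used to realize the Morse--Smale condition and to avoid the accidental disc-incidences of Remark~\ref{rem:pearl1} can be taken sufficiently small so as to preserve the coordinate-aligned description of $U$ and $S$ above and hence the open condition $v_i \notin U + S$; making these various perturbative hypotheses compatible is the only real subtlety in the argument, and it is resolved by observing that transversality is open and that the perturbations needed in Section~\ref{subsec:MFtoricfano1} are of arbitrarily small $C^1$-size.
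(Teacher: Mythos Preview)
Your proof is correct and follows essentially the same approach as the paper's: both arguments use the torus-equivariant structure of the Maslov-two disc moduli together with the coordinate-aligned tangent spaces of $W^u(e_I)$ and $W^s(e_J)$ to reduce transversality to the single condition that the boundary direction $v_i$ has nonzero $E_j$-component, and both then observe that this condition holds automatically whenever the pearl moduli is nonempty. Your explicit model $\CM_2(\bL,J,\beta_i)\cong \bL\times S^1$ and the clean reduction to $U+S+\R\langle v_i\rangle = T\bL$ is a tidier packaging of the same linear algebra the paper carries out by listing spanning vectors; your closing remarks about the auxiliary diffeomorphism $h$ are unnecessary in the setup of Section~\ref{subsec:MFtoricfano1} (where the product-type Morse function $f_{(a_1,\ldots,a_n)}$ already has coordinate-aligned stable/unstable manifolds), but they do no harm.
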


\begin{proof}
From the condition on $I$ and $J$, we may assume that $I=\{ 1, 2, \cdots,k \}$ and $J= \{1,2 \cdots, k-1\}$. At an intersection point $p$ of $ev_\beta$ and $W^u (e_I) \times W^s (e_J)$, directions of flow lines in $W^u (e_I) \times W^s (e_J)$ generates 
$$\R \langle (E_{k+1}, 0),(E_{k+2},0) \cdots, (E_n,0)\rangle \oplus \R \langle(0,E_1), (0,E_2), \cdots, (0,E_{k-1}) \rangle$$
in $T_p (\bL \times \bL)$.

Moreover, the translations in $\mathcal{M}_2 (\bL,J,\beta)$ due to the torus action give rise to
$\bigoplus_{i=1}^n \R \langle (E_i, E_i) \rangle$
Combining these two, $ev_\beta$ and $W^u (e_I) \times W^s (e_J)$ already generate
\begin{equation}\label{eq:genexcek}
\R \{ (E_i, E_j) : i \neq k, j \neq k\} \oplus \R \langle (E_k,E_k) \rangle \leq T_p (\bL\times \bL),
\end{equation}
and hence, it suffices to prove that the movement of markings in $\mathcal{M}_2(\bL,J,\beta)$ induces the $(E_k,0)$-direction. To see this, recall that
$$W^u (e_I) \subset \{ (a_1, \cdots,a_{k-1}, a_k, t_1, t_2 \cdots, t_{n-k}) : t_i \in \R \}$$
$$W^s (e_J) \subset \{ (s_1 a_1, \cdots, s_{k-1} a_{k-1}, 0, 0, \cdots, 0) : s_j \in \R \}$$
(modulo $\Z^n$).
Since $a_k$ is not an integer, the boundary image of the disc associated with the intersection point $p$ should have a nontrivial $e_k$-component. Therefore, if we vary the location of the first marking, we obtain a vector $(v,0)$ in $T_p (\bL\times \bL)$ where $v$ has a nontrivial $E_k$-component. This together with \eqref{eq:genexcek} proves the lemma.
\end{proof}


In \cite{BC}, Biran and Cornea proved that there exists a second category $J_{reg}$ of compatible almost complex structures on $X$ whose moduli space of pearl trajectories is a smooth manifold of expected dimension.
It is not clear that the standard complex structure $J_0$, which is regular for a trajectory with a single pearl, belongs to $J_{reg}$.
But we can show that the single pearl computation with a generic $J$ sufficiently close to $J_0$ is the same as that with $J_0$,
basically due to the fact that single pearl trajectories for $J_0$ are already Fredholm regular.

\begin{lemma}\label{lem:Jreg1}
There exists an almost complex structure $J \in J_{reg}$ sufficiently close to $J_0$ such that
the single pearl contribution of the matrix factorization (of $J$) is identical to that of $J_0$.
\end{lemma}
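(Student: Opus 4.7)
\textbf{Proof plan for Lemma \ref{lem:Jreg1}.} The plan is to exploit the fact that $J_{reg}$ is dense in the space $\mathcal{J}$ of compatible almost complex structures (Biran--Cornea), combined with the Fredholm regularity of single pearl trajectories at $J_0$ (Lemma \ref{lem:trans} together with Cho--Oh regularity of Maslov-2 disc moduli in toric Fano), to produce a $J \in J_{reg}$ arbitrarily $C^\infty$-close to $J_0$ whose single-pearl count agrees with that of $J_0$. The two things to verify are: (i) every $J_0$-single-pearl persists uniquely to a nearby $J$-single-pearl with the same sign, area and holonomy, and (ii) no extra $J$-single-pearls appear for $J$ close to $J_0$. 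The matching (i) comes from the implicit function theorem, while (ii) comes from Gromov compactness plus positivity of Maslov index.

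For (i), recall that a single pearl trajectory from $e_I$ to $e_J$ (with $|J|=|I|-1$) consists of a Maslov-2 $J_0$-disc $u$ of some class $\beta_i$, glued to two semi-infinite gradient segments of $f_{(a_1,\ldots,a_n)}$. The disc moduli $\mathcal{M}_2(\bL,J_0,\beta_i)$ is already regular (Cho--Oh), the Morse function is Morse--Smale, and by Lemma \ref{lem:trans} the evaluation $ev_\beta$ is transverse to $W^u(e_I)\times W^s(e_J)$. Hence the linearized pearl operator at each such trajectory is surjective. By a standard implicit function theorem argument (as in the gluing/deformation analysis of Biran--Cornea), for every $J$ in a sufficiently small $C^\infty$-neighborhood $\mathcal{U}$ of $J_0$ there is a canonical bijection between isolated $J_0$-single-pearls and isolated $J$-single-pearls close to them, with continuously varying area, orientation-determined sign, and boundary path (hence, since the boundary path varies only by a small homotopy that does not cross any gauge hypertorus transversally, the holonomy factors $Z_{\gamma_j}\cdot Z_{\partial D_i}$ are literally unchanged).

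For (ii), suppose, toward contradiction, we have a sequence $J_n\to J_0$ in $J_{reg}\cap \mathcal{U}$ and a sequence of $J_n$-single-pearls $\Gamma_n$ from $e_I$ to $e_J$ that are not in the image of the bijection above. Gromov--Floer compactness yields a subsequential limit $\Gamma_\infty$ that is a stable $J_0$-pearl trajectory from $e_I$ to $e_J$ of total Maslov index $2$. Assumption \ref{assum1} forces each non-constant disc or sphere bubble to have positive Maslov/Chern number, and the Morse-trajectory components contribute non-negatively to the virtual dimension; so $\Gamma_\infty$ must still be a single Maslov-2 disc $u_\infty$ glued to two Morse segments, with no bubbling and no extra flow breaking. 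But then $\Gamma_\infty$ is one of the $J_0$-single-pearls already accounted for, and by the local uniqueness in (i) we have $\Gamma_n$ in the image of the bijection for $n$ large, a contradiction.

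Finally, since $J_{reg}$ is of second Baire category in $\mathcal{J}$, it intersects every neighborhood $\mathcal{U}$ of $J_0$. Choose $\mathcal{U}$ small enough for (i) and (ii) to hold and pick any $J\in J_{reg}\cap \mathcal{U}$; by construction the $J$-single-pearl contribution to $(\delta_{\textnormal{pearl}})_{-1}$ (sign, area, holonomy) matches that computed in Sections \ref{subsec:MFtoricfano1}--\ref{subsec:MFtoricfano2} using $J_0$. The main obstacle is bookkeeping the holonomy under perturbation: one must ensure the Hamiltonian-small deformation of the boundary loop does not cross a gauge hypertorus, which is achieved by taking $\mathcal{U}$ small relative to the fixed (finite) collection of single pearls and the distance from their boundaries to $H_i+p$ and $H_i+\up$.
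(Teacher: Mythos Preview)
Your proof is correct and follows essentially the same strategy as the paper, though packaged differently. The paper argues via a one-parameter family $\{J_t\}_{t\in[0,1]}$ and the resulting one-dimensional cobordism $\mathcal{M}=\bigcup_t\{t\}\times\mathcal{M}_2(p,q,\beta,J_t)$, observing that for $t$ below some threshold $t_e$ there is no creation or cancellation along the cobordism and the homotopy class of the boundary path (hence the holonomy) is preserved; you instead split this into its two constituent ingredients, namely persistence via the implicit function theorem (your (i)) and absence of spurious solutions via Gromov compactness plus index/positivity constraints (your (ii)). These are two presentations of the same mechanism: the cobordism is a product near $t=0$ precisely because of surjectivity of the linearized operator and compactness. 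Your version is more explicit about why holonomy is preserved (the boundary path moves by a small homotopy not crossing the gauge hypertori), which the paper leaves implicit in the phrase ``same homotopy class of paths.''
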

\begin{proof}
Let $p$, $q$ be two critical points of the chosen Morse function, and consider the one-dimensional moduli space of single pearl trajectories $\mathcal{M} := \cup_{t \in [0,1]} \{ t \} \times \mathcal{M}_2 (p,q,\beta,J_t)$ for one parameter
family of compatible almost complex structures $\{J_t\}_{t \in [0,1]}$.
By standard argument as in \cite{BC}, we can find such an one parameter family of almost complex structures $\{J_t\}_{t \in [0,1]}$ 
starting from the standard complex structure $J_0$ such that $\mathcal{M}$ is a smooth one-dimensional manifold with boundary.
We can choose $t_{p,q}$ such that in the part $ 0 <t  < t_{p,q}$, there exists no creation or cancellation phenomenon of the
cobordism $\mathcal{M}$. We take a minimum of $t_{p,q}$ for all such $p,q$ and denote it by $t_e$. Recall that the matrix
factorization is given from the relative homotopy class of a path (with fixed end points $p,q$) for a pearl trajectory by considering
intersection data with hyper-tori. It is easy to see that for sufficiently small $0<t<t_e$ with $J_t \in J_{reg}$, single pearl trajectories for $J_t$ and $J_0$ give rise to the same homotopy class of paths, and hence provides the identical matrix factorization.
%
\end{proof}

\section{Applications}\label{sec:app}

From the previous section, the matrix factorization mirror to a Lagrangian torus fiber $(\uL,\cL_\uz)$ takes the form $d = d_{1} + d_{-1} + \cdots + d_{-(2\lfloor(n+1)/2\rfloor-1)}$ on $\largewedge^* \underline{\Lambda^n}$ where $d^2 = W_z - W_\uz$, see Equation \eqref{eq:decompd1}.   We derived an explicit expression for the approximated matrix factorization $\tilde{d} = d_{1} + d_{-1}$ in Theorem \ref{thm:explicit}.  

In this section, we prove that $d$ generates $D\MF(W(z) - W(\uz))$ by using the spectral-sequence method by Polishchuk-Vaintrob \cite{PV} and the result of Dyckerhoff \cite{Dyc}.  The key is that the higher order terms $d_l$ for $l < -1$ have no contribution to the spectral sequence which computes the cokernel of $d$, and hence cokernel of $d$ is just the same as that of its approximation $\tilde{d}$ which generates the category.
We also deduce a change of coordinates which brings $d$ to $\tilde{d}$ in low dimensions $n \leq 4$.

\subsection{Generating the category of matrix factorizations}

Let $N$ be a lattice of rank $n$, and let $\{E_1,\ldots,E_n\}$ be a basis of $N$.  For $v_i \in N$ for $i=1,\ldots,m$, let $W = \sum_{i=1}^m c_i z^{v_i}$ be a Laurent polynomial, where $c_i \in \C$ are constants.  Here $z^{v_i}$ denotes the monomial $\prod_{l=1}^n z_l^{v_{i,l}}$, where $v_i = \sum_{l=1}^n v_{i,l} E_l$ is written in terms of the basis $\{E_l\}$.  Let $\Jac(W) = \C[z_1^{\pm 1}, \ldots, z_n^{\pm 1}]/(z_1 \partial_{z_1} W, \ldots, z_n \partial_{z_n} W)$ be the Jacobian ring of $W$, which can be regarded as the deformation space of $W$.  (We change from $\Lambda$ to $\C$ by setting $T = e^{-1}$ in this section.)

The number of critical points of $W$ (counted with multiplicities) equals to $\dim \Jac(W)$.  The category of matrix factorizations of $W$ can be written as a direct sum:
$$ \bigoplus_{l} D\MF (W(z) - W(\uz^{(l)})) $$
where $l$ labels the critical points.  For each $l$, $W(z) - W(\uz^{(l)})$ is identified as an element in $\C[[z_1-\uz_1^{(l)},\ldots,z_n-\uz_n^{(l)}]]$, which has an isolated critical point at $z=\uz^{(l)}$.

Fix a critical point $\uz = \uz^{(l)}$.  We have the matrix factorization $(\largewedge^* \underline{\C^n}, R_0(\uz))$ given in Theorem \ref{thm:explicit}:
\begin{equation*}
R_0(\uz) = \left(\sum_{i=1}^n (z_i - \uz_i) E_i \wedge\right) + \left( \sum_{i=1}^m c_i \sum_{j=1}^n \alpha^i_j \iota_{E_j} \right)
\end{equation*}
where $\alpha^i_j$'s are as stated there.  All terms involved in the definition are combinatorial and only depend on $W$.  This matrix factorization is of wedge-contraction type, and hence serves as a generator of $D\MF (W(z) - W(\uz^{(l)}))$ by \cite{Dyc}.  At this point we do not need to assume $W$ to be the Landau-Ginzburg mirror of a toric manifold.  For instance $W$ could be mirror to a Grassmannian.  These generators should be helpful for proving homological mirror symmetry.

Now suppose $W$ is mirror to a toric Fano manifold.  A critical point $\uz$ corresponds to a non-displaceable Lagrangian torus fiber \cite{FOOOT}.  The matrix factorization $R^{(l)}$ given above is the approximated matrix factorization $\tilde{d} = d_1 + d_{-1}$ mirror to a critical Lagrangian torus fiber.  By spectral sequence technique below, we see that the higher order terms $d_k$ for $k \leq -3$ are useless to generation, and so the mirror matrix factorizations $(\largewedge^* \underline{\C^n},d)$ generates as well.

\begin{theorem} \label{thm:gen}
The matrix factorization $R=(\largewedge^* \underline{\C^n},d)$ split generates $D\MF(W(z) - W(\uz))$.
\end{theorem}

\begin{proof}
$W(z) - W(\uz)$ (where $\uz$ is constant) has an isolated singularity at $z = \uz$.  Let $R = \Lambda[z_1^{\pm_1},\ldots,z_n^{\pm_1}]$ and $I = (z_1-\uz_1,\ldots,z_n-\uz_n)$ a maximal ideal.   $d$ consists of $d^{1 \to 0}: \largewedge^{\textrm{odd}} \underline{\C^n} \to \largewedge^{\textrm{even}} \underline{\C^n}$ and $d^{0 \to 1}: \largewedge^{\textrm{even}} \underline{\C^n} \to \largewedge^{\textrm{odd}} \underline{\C^n}$.  

Suppose $n$ is even.  We shall show that the cokernel of $d^{1 \to 0}$ has the $R/(W-W(\uz))$-module $R/I$ (which is geometrically the point $\uz$ of the hypersurface $\{W(z) - W(\uz)=0\}$) as a direct summand.  Since $R/I$ split generates the category, $d^{1 \to 0}$ also split generates.  For $n$ is odd, we shall replace $d^{0 \to 1}$ in the above statement.

The cokernel is the cohomology of the second term of the sequence 
$$0 \to \largewedge^{\textrm{odd}} \underline{\C^n} \to \largewedge^{\textrm{even}} \underline{\C^n} \to 0$$
where the middle arrow is $d^{1 \to 0}$.  Since both $\largewedge^{\textrm{odd}}$ and $\largewedge^{\textrm{even}}$ are graded, it gives the following spectral sequence whose total cohomology compute the cokernel of $d^{1 \to 0}$.

\begin{equation*}
\begin{small}
\xymatrix{ \largewedge^{n} \underline{\C^n} &  &   &   \\
\largewedge^{n-1} \underline{\C^n} \ar[r]^{d_{-1}} \ar[u]^{d_{1}} \ar@{.>}[rrd]^>>>>>>>{d_{-3}}& \largewedge^{n-2}\underline{\C^n} & &  \\
 & \largewedge^{n-3} \underline{\C^n} \ar[r] \ar[u] \ar@{.>}[rrd] &  \largewedge^{n-4}\underline{\C^n} & \cdots \\
 & & \vdots& \ddots  \\
 &  E^0 \,\, \mbox{page} & &  }
\xymatrix{ R/I &  &   &   \\
\ker \left(d_{1}|_{\largewedge^{n-1}} \right)\ar[r]^>>>>>{d_{-1}} \ \ar@{.>}[rrd]^>>>>>>>>>>{d_{-3}}& {\rm coker} \left(d_1|_{\largewedge^{n-3}} \right) & &  \\
 & \ker \left(d_{1}|_{\largewedge^{n-3}} \right) \ar[r]  \ar@{.>}[rrd]^>>>>>>>>>>{d_{-3}} &   {\rm coker} \left(d_{1}|_{\largewedge^{n-5}} \right) &   \\
 & & \vdots&  \\
 &  E^1\,\, \mbox{page} & &  }
\end{small}
\end{equation*}
By Lemma \ref{lem:pearld1}, $d_{1} = \sum_{i=1}^m (z_i - \uz_i) e_i \wedge (\cdot)$.  Thus ${\rm coker} (d_1:\largewedge^{n-1} \underline{\C^n} \to \largewedge^{n} \underline{\C^n}) = R/I$ which is the top left corner of the $E^1$ page.  Moreover from the exactness of the Koszul complex $(\largewedge^{*} \underline{\C^n}, d_{1})$, we have
$\ker \left(d_{1}|_{\largewedge^{k}} \right) = {\rm im} \left(d_{1}|_{\largewedge^{k-1}} \right)$ for $k=0,\ldots,n-1$.  Since $d_{-1} \circ d_1 = (W_z - W_{\uz}) \cdot \Id$ by Theorem \ref{thm:square=W}, the horizontal maps $d_{-1}$ in the $E^1$ page are identified with the multiplication by $(W_z - W_{\uz})$, which is injective.
As a result the $E^2$ page has only diagonal terms being non-zero and hence stabilizes.  We see that $R/I$ is a direct summand of the cokernel of $d^{1 \to 0}$.

When $n$ is odd, we consider the cokernel of $d^{0 \to 1}$, which is the cohomology of the second term of the sequence 
$$0 \to \largewedge^{\textrm{even}} \underline{\C^n} \to \largewedge^{\textrm{odd}} \underline{\C^n} \to 0. $$  The spectral sequence is the same as above and we have the same conclusion.
\end{proof}

\subsection{Mirror matrix factorizations in low dimensions} \label{sec:qe}
Suppose $\dim_\C X \leq 4$.  We prove that the matrix factorization $R$ in Theorem \ref{thm:main} is exactly of wedge-contraction type in this section.
\begin{theorem}
The mirror matrix factorization of $W(\bL)$ corresponding to the critical point $(\bL,\uz)$ ($\in \Fuk (X)$) is of {\em wedge-contraction type}. Namely,
we have the $\Z/2$-graded free $\Lambda$-module  $\largewedge^* \underline{\Lambda^n}$
generated by $e_I^{new}$ for $I \subset \{1,\cdots, n\}$ and 
$$d  =  \sum x_i e_i^{new} \wedge_{new}  + \sum_{i \in I}  w_i \iota_i^{new},$$
such that for $\lambda = W(\bL)(\uz)$, we have $d^2 = W(\bL)(z) - \lambda$.
\end{theorem}

Here, we used the notation $e_I^{new}$ instead of $e_I$ as we need to define a new (quantum) exterior algebra structure
to make it of wedge-contraction type (in dimension 4).
\begin{proof}
Denote by $(P,d)$ the matrix factorization of $W(\bL)$ corresponding to $(\bL,\uz)$ obtained from the pearl complex.
 By the degree reason (with $\dim(X) \leq 4$), the decomposition of $d$ \eqref{eq:decompd1} has only three components:
$$d = d_{1} + d_{-1}  + d_{-3}.$$
First, let us assume that $\dim X \leq 2$
In this case, $d_{-3}$ vanishes  by degree reason, and we already show that  $d_{1} + d_{-1}$ is  a wedge-contraction type matrix factorization of $W-\lambda$.

Now, let $\dim X =3$. We prove that $d_{-3} =0$.
By degree reason, it is enough to show that $d_{-3} (e_{top})=0$ for $e_{top}=e_1 e_2 e_3$.
We expand $d^2$ using the above decomposition of $d$, and the degree -($-2$) component
of $d^2 = (W-\lambda ) \cdot Id$ gives $d_{1} \circ d_{-3} + d_{-3} \circ d_{1}=0$.
Applying it for $e_2 \wedge e_3$, we get
$$0=(d_{1} \circ d_{-3} + d_{-3} \circ d_{1} ) ( e_2 \wedge e_3) = d_{-3} ( (z_1- \uz_1) e_{top} )= (z_1 - \uz_1) \,d_{-3} (e_{top}).$$
Since $P$ is torsion-free, this implies that $d_{-3} (e_{top})=0$ as desired. 

Finally, let $\dim X =4$. For notational convenience, we set $x_i:=z_i -\uz_i$, and write $d_{1} = \sum_{i=1}^4 x_i e_i \wedge$ and $d_{-1} := \sum_{i=1}^4 w_i \iota_{e_i}$ (then $W= \sum x_i w_i$). Also, for degree three generators, we set the notation as follows:
\begin{equation*}
\begin{array}{llll}
e_{top \setminus 1} :=   e_2 \wedge e_3 \wedge e_4 &  e_{top \setminus 2} :=  e_1 \wedge e_3 \wedge e_4 &
e_{top \setminus 3} :=  e_1 \wedge e_2 \wedge e_4 & e_{top \setminus 4} :=  e_1 \wedge e_2 \wedge e_3 
\end{array}
\end{equation*}

We do not know if $d_{-3}$ vanishes in this case, but we can show that there is 
a ``new exterior algebra structure'' on the pearl complex so that $d$ becomes a wedge-contraction type.
For this purpose, we need some calculations.

From $d^2 = (W-\lambda) \cdot Id$, we have the following two identities:
\begin{equation}\label{eq:dim4wc1}
d_{1} \circ d_{-3} + d_{-3} \circ d_{1} \equiv 0
\end{equation}
\begin{equation}\label{eq:dim4wc2}
d_{-1} \circ d_{-3} + d_{-3} \circ d_{-1} \equiv 0
\end{equation}

Write $d_{-3} (e_{top\setminus i}) := f_i$ which is a function on $x_i$'s. From \eqref{eq:dim4wc1} with $e_{top\setminus i}$, it follows that
$$ f_i \cdot \left( \sum x_j e_j \right) + d_{-3} ( (-1)^{i-1} x_i e_{top} ) = 0.$$
(Here, we used $d_{+1} (e_{top \setminus i} )= (-1)^{i-1} x_i e_{top}$.)
Since the second term is divisible by $x_i$, $f_i x_j e_j$ for $i \neq j$ in the first summand should be a multiple of $x_i$, also. This implies that $f_i = x_i g_i$ for some $g_i$. Then, one gets
$$d_{-3} (e_{top}) = (-1)^{i} g_i \left( \sum_j x_j e_j \right),$$
and hence $(-1)^i g_i$ does not depend on $i$, and we may set $g:= (-1)^i g_i$. In summary, we have
\begin{equation}\label{eq:dim4delta3}
 d_{-3} (e_{top}) = g \left( \sum_j x_j e_j \right) \quad d_{-3} (e_{top \setminus i} ) = (-1)^i x_i g \;\; \textrm{for}\; i=1,2,3,4.
\end{equation}
We will consider a new ``exterior algebra structure'' on $\wedge \Lambda^4$.
We define a new basis $e_{I}^{new}$ whose interior and exterior multiplication defined in a standard way: For a disjoint $I,J$, 
$$e_{I}^{new} \wedge_{new} e_{J}^{new} = \pm e_{I \cup J}^{new}.$$
where the sign is determined by identifying $e_I^{new}$ with $e_{i_1}^{new} \wedge_{new} \cdots \wedge_{new} e_{i_k}^{new}$ for $I=\{i_1 < \cdots <i_k\}$. (The wedge product is zero if $I,J$ are not disjoint.) The new interior multiplication $\iota_{i}^{new}:=\iota_{e_i^{new}}^{new}$ is
similarly defined.

We would like to define such a new exterior algebra structure so that
the differential $d$ becomes a wedge-contraction type as in the statement of the theorem.
In the case of $n=4$, we define a new {\em exterior algebra structure} by setting
$$e_{top}^{new} := e_{top} - g, \;\;\; e_{I}^{new} := e_{I} \;\; \textrm{for}\; I \neq \{1,2,3,4\}.$$
Then, from the previous setting,
$$d ( e_{top \setminus i}^{new}) = d ( e_{top \setminus i}) = (d_{1} + d_{-1} + d_{-3}) (e_{top \setminus i})
= d_{-1} (e_{top \setminus i}) + (d_{1} + d_{-3})(e_{top \setminus i}),$$
and one can easily check that
$$(d_{1} + d_{-3})(e_{top \setminus i}) = (-1)^{i-1} x_i  e_{top}^{new}.$$

Also, 
$$d (e_{top}^{new}) = (d_{1} + d_{-1} + d_{-3}) (e_{top} -g)
= d_{-1} (e_{top})  - d_{1} g + d_{-3} e_{top}= \sum (-1)^{i-1} w_i \, e_{top \setminus i} +0$$
since we have 
$$- d_{1} g + d_{-3} e_{top} = - \sum_i x_i ge_i+ \sum_i x_i g e_i =0.$$ 
This provides the desired wedge-contraction type structure obtained from the ``quantum correction of exterior algebra structure".
\end{proof}

\section{The real Lagrangian in the projective space} \label{sect:RP}
In this section, we compute the mirror matrix factorization of the real Lagrangian $\mathbb{R} \bP^n$ in $\mathbb{C}\bP^n$  for $n \geq 2$ when we take the reference to be the Clifford torus $\bL=L_u$. Here, $u$ is the center of the moment polytope of $\mathbb{C} \bP^n$, and $L_u$ is the fiber of the moment map at $u$.  We will give a detailed  description for $\mathbb{R} \bP^3$ at the end of the section.

Alston and Amorim \cite{AlAm} presented a comprehensive examination of Lagrangian Floer theory between the torus fiber and $\mathbb{R} \bP^n$. They first twisted Floer cohomology by a locally constant sheaf over a characteristic-2 ring instead of a line bundle, and took the Novikov ring
$$\Lambda^{\mathbb{F}_2}:= \left\{ \sum_{i=1}^\infty a_i T^{\lambda_i} \,\, | \,\, a_i \in \mathbb{F}_2, \, \lambda_i \in \R, \, \lambda_i \to \infty \right\}$$
over $\mathbb{F}_2$ as the coefficient ring, where $\mathbb{F}_2$ is the algebraic closure of $\Z_2$.  
Following their approach, we take our mirror variables to live in $\mathbb{F}_2^\times$.

From \cite{Oh1}, real Lagrangian $\mathbb{R} \bP^n$ has a minimal Maslov number $n+1$, and hence there exists no
Maslov index two disc with boundary on $\mathbb{R} \bP^n$ for $n\geq 2$. Thus, the real Lagrangian $\mathbb{R} \bP^n$ should
correspond to the matrix factorization of $W - 0$ for the Floer potential $W$ of $\bL$. However, one can check easily that
0 is not a critical value of $W$ which implies that the real Lagrangian $\mathbb{R} \bP^n$ corresponds to a trivial object  with the usual Novikov coefficients.
The same phenomenon happens in Floer theory also. Namely, Clifford torus $\bL$ and $\mathbb{R} \bP^n$ have different potential values,
and hence, its Floer cohomology cannot be defined.
But, if we use a characteristic-2 coefficient ring, 0 is a critical value for $n$ odd, hence $\mathbb{R} \bP^n$  can provide a non-trivial 
matrix factorization in the mirror.  We assume that $n$ is odd from now on.

Let us briefly review the construction of the Floer cohomology in \cite{AlAm} which involves locally constant sheaves (analogous to flat connections over the field $\mathbb{F}_2$). 
For each homomorphism $\rho : \pi_1 (\bL) \to \mathbb{F}_2^\times$, one can equip $\bL$ with a locally constant sheaf defined by
$$\mathcal{L}_\rho = \widetilde{\bL} \times \mathbb{F}_2 / (x \cdot \gamma, v) \sim (x, \rho (\gamma) v ) \qquad \gamma \in \pi_1 (\bL) $$
where $\widetilde{\bL}$ is the universal cover of $\bL$. This process is analogous to the construction of  $\C^\times$-flat line bundles from elements of $\Hom (\pi_1 (\bL), \C^\times)$. As fibers are discrete, one can define a parallel transport in $\mathcal{L}_\rho$ along a path in $\bL$. Then,

$$CF((\bL, \mathcal{L}_\rho),\mathbb{R} \bP^n) := \bigoplus_{\bL \cap \mathbb{R} \bP^n} \Hom (\mathcal{L}_\rho|_p, \mathbb{F}_2) \otimes \Lambda^{\mathbb{F}_2}$$
and the Floer differential is defined as in the case of flat complex line bundles (see Section \ref{sec:3}). 

Let us now choose a hyper-tori for the reference Lagrangian $\bL$. Recall that we fix the gauge of the (flat) connection for $\mathcal{L}_z$ in such a way that we put holonomy effect from $\rho$ whenever the (upper) boundary $\partial_0 u$ of a strip $u$ passes through chosen gauge hypertori. i.e. if $\partial_0 u$ traverses a hyper-torus $H_i$ positively, we have a variable $z_i$ which takes value in $\mathbb{F}_2^\times$. 
Here, the gauge hyper-tori are chosen in terms of  homogeneous coordinates of $\C \bP^n$ as follows:
if we write $\bL = \{[1 :e^{i \theta_1} :\cdots :e^{i \theta_n}] | 0 \leq \theta_j <2 \pi \}$ (a part of $\bL$ that lies inside the affine chart $\{x_0 \neq 0 \}$),
$$H_i := \{[1 :e^{i \theta_1} : \cdots : e^{i \theta_{i-1}} : e^{i \epsilon_i} : e^{i \theta_{i+1}} : \cdots : e^{i \theta_n}]  | 0 \leq \theta_j < 2 \pi, j \neq i \}$$
for a positive $\epsilon_i$ close to $0$.

Consequently, the potential
$$W = T^k \left(z_1 + z_2 + \cdots  + z_n + \frac{1}{z_1 z_2 \cdots z_n} \right)$$
is regarded as a function on $\left(\mathbb{F}_2^\times \right)^n$ 
(the potential itself does not depend on the choice of gauge hyper-tori.)
Note that $z_i = - z_i$ since now the coefficient ring is of characteristic $2$.

To compute the matrix factorization associated with $\mathbb{R} \bP^n$, we first need to find intersection points between the torus fiber and $\mathbb{R} \bP^n$ and then classify holomorphic strips of Maslov index one among these intersection points.
$\bL$ and $\mathbb{R} \bP^n$ intersect at $2^n$ points, which can be written in homogeneous coordinates as
$$[\pm 1 : \pm 1 : \cdots, \pm 1]$$
whose degrees depend on the number of $-1$'s in the entries (or equivalently the number of $1$'s since $n$ is odd).

For two different intersection points $p$ and $q$, there is a (index-1) holomorphic strip between $p$ and $q$ only when $n$ coordinate components of $p$ and $q$ agree (after over-all multiplication by $-1$ if necessary) i.e.,  
$$p=[ a_0  : \cdots, a_{i-1}  : -1 :a_{i+1}  : \cdots : a_{n} ]$$
$$q=[ a_0  : \cdots, a_{i-1}  : +1:a_{i+1}  : \cdots : a_{n} ]$$
where $a_0, \cdots, a_{i-1}, a_{i+1}, \cdots, a_n$ are $\pm 1$, and the even number of them are $-1$. According to \cite{AlAm}, there are two holomorphic strips between $p$ and $q$ (one from $p$ to $q$ and the other from $q$ to $p$) both of which are halves of the holomorphic disc corresponding to the $z_i$-term  in the potential for $1 \leq i \leq n$ and the $\frac{1}{z_1 \cdots z_n}$-term for $i=0$. One can figure out the input and the output of the holomorphic strip from the orientation of the boundary of the discs (with help of \cite[Proposition 4.1]{AlAm}). 
%

The contributions of these strips to the entries of the mirror matrix factorization for $\R \bP^n$ are given as follows:

\vspace{0.2cm}

\noindent(i) $1 \leq i \leq n$ : The upper boundary of the holomorphic strip from $p$ to $q$ intersects the $i$-th hyper torus in positive direction, and hence gives the term $T^{k/2} z_i$. The other half of the $z_i$-disc runs from $q$ to $p$ not intersecting any hyper tori, so it produces $T^{k/2}$-term.

\noindent(ii) $ i=0$ : The  strip from $p$ to $q$ is the half of the $\frac{1}{z_1 \cdots z_n}$-disc and its upper boundary passes negatively through the $j$-th hyper torus for each $a_j =-1$. Thus, it gives the term 
$\frac{T^{k/2}}{\prod_{j \leq n} z^{\delta (a_j, -1)}}.$
where $\delta (a, b) := 1$ if $a = b$ and zero otherwise as in Section \ref{subsec:MFtoricfano1}.

On the other hand, the upper boundary of the strip from $q$ to $p$ intersects the $l$-th hyper torus if $a_l = 1$, and hence induces
$\frac{T^{k/2}}{\prod_{j \leq n} z^{\delta (a_j, 1)}}.$
\vspace{0.2cm}

The case for $i=0$ looks distinguished from the other cases due to our specific choice of basis of $\pi_1(\bL)$ and gauge hypertori: recall that the chosen basis is $\{\partial \beta_1,\ldots,\partial \beta_{n}\}$ for the basic disc classes $\beta_1,\ldots,\beta_n$, while $\partial \beta_{0} = - \sum_{i=1}^n \partial \beta_i$.

In conclusion, we have the following theorem:

\begin{theorem} \label{thm:RP}
The mirror matrix factorization of the real Lagrangian $\R \bP^n$ in $\C \bP^n$ for odd $n$ is formally generated by $[\pm 1 :  \cdots : \pm 1]$ over $\Lambda^{\mathbb{F}_2}[z_1, \cdots, z_n]$, and equipped with a module map whose matrix coefficients $m_{qp}$ and $m_{pq}$ for two generators $p$ and $q$ are given as follows:
\begin{enumerate}
\item for $p=[a_0 : \cdots : a_{i-1} : -1 : a_{i+1} : \cdots : a_n]$ and $q=[a_0 : \cdots : a_{i-1} : 1 : a_{i+1} : \cdots : a_n]$ with even number of $-1$'s in $a_j$ for  $j\neq i$,
\begin{itemize}
 \item $m_{qp} = \dfrac{T^{k/2}}{\prod_{1 \leq j \leq n} z^{\delta (a_j, -1)}}, \quad m_{pq} = \dfrac{T^{k/2}}{\prod_{1 \leq j \leq n} z^{\delta (a_j, 1)}}, \quad$ if $i=0$;
\item  $ m_{qp} =T^{k/2} z_i, \quad m_{pq} =T^{k/2} \quad$ if $i \neq 0$;
\end{itemize}
\item $m_{qp}$ and $m_{pq}$ are zero in other cases.
\end{enumerate}
\end{theorem}

%

We provide the mirror matrix factorization of $\R \bP^3 (\subset \C \bP^3)$ in an explicit matrix form. 
 We arrange $8$-intersection points between $\bL$ and $R$ as 
\begin{equation*}
\begin{array}{ll}
p_1 = [1 :-1 : -1 : -1], & p_2 = [1:-1 : 1 : 1 ],\\
p_3 = [1: 1 :-1 : 1], & p_4 = [1: 1 :1 : -1 ],
\end{array}
\end{equation*}
and
\begin{equation*}
\begin{array}{ll}
q_1 = [1 :1 : 1 : 1], & q_2 = [1 : 1 : -1 : -1],\\
q_3 = [1: -1 :1 : -1 ], & q_4 = [1: -1 :-1 : 1 ].
\end{array}
\end{equation*}
Restricting the previous computation to dimension $3$, 
the matrix factorization mirror to $R$ is as follows:
\begin{equation}\label{eq:MFRP3}
\bordermatrix{& p_1 & p_2 & p_3 & p_4  & q_1 & q_2 & q_3 & q_4 \cr
          p_1 &    0  & 0   & 0 & 0 & \frac{T^{k/2}}{z_1 z_2 z_3}  & T^{k/2} & T^{k/2} & T^{k/2} \cr
                   p_2 &    0  &  0 & 0  & 0 & T^{k/2} & -\frac{T^{k/2}}{z_1} & -T^{k/2} z_3 & T^{k/2} z_2 \cr
                    p_3 &    0  & 0  & 0  & 0 & T^{k/2} & T^{k/2} z_3 & -\frac{T^{k/2}}{z_2} &  -T^{k/2} z_1 \cr
                    p_4  &     0 &  0 &  0 & 0 & T^{k/2} & -T^{k/2} z_2 & T^{k/2} z_1 & -\frac{T^{k/2}}{z_3} \cr            
 q_1 &   T^{k/2}  &   T^{k/2} z_1  &  T^{k/2} z_2  & T^{k/2}  z_3  & 0 & 0 & 0 & 0 \cr
 q_2 & T^{k/2}  z_1  &   -\frac{T^{k/2}}{z_2 z_3}   & T^{k/2} & -T^{k/2}  &0  & 0 &0  &0  \cr
 q_3 & T^{k/2} z_2  &  -T^{k/2}  & -\frac{T^{k/2}}{z_3 z_1} & T^{k/2}  & 0 &  0 & 0 & 0 \cr
 q_4 &  T^{k/2} z_3  &  T^{k/2}   &  -T^{k/2} & -\frac{T^{k/2}}{z_1 z_2}  & 0 & 0 & 0 & 0 \cr                }
 \end{equation}
Although $1=-1$ in $\mathbb{F}_2$, we put signs so that \eqref{eq:MFRP3} also defines a matrix factorization over a characteristic zero field.

\appendix
\section{Orientations}\label{app:signrule}

\subsection{Orientation conventions}
Biran and Cornea has shown that when a Lagrangian submanifold $L$ is spin, the pearl complex
 can be defined over $\Z$-coefficient in \cite[Appendix A]{BC2}. We follow their orientation convention for the computations of matrix factorizations in this paper.
  
 First, we recall some of elementary orientation conventions. 
 An intersection $A \cap B$ is oriented as follows (see \cite{BC2}, \cite{FOOO}).
For each $x \in A \cap B$, we choose the orientation of the normal bundle $N_AL$ so that
we have $o(T_xL) = o(N_{x,A}L) \wedge o( T_x A)$, where we denote the orientation of a vector space $V$ as $o(V)$.
If $A$, $B$ intersect transversely, we define the orientation of $A\cap B$ by
$$o(T_xL) = o(N_{x,A}L) \wedge o(N_{x,B}L) \wedge o(T_x(A\cap B)).$$
For $f:A \to L, g:B \to L$, the  orientation convention of the fiber product $A \times_L B$ 
 in \cite{BC2}, \cite{FOOO} are the same.
If $f$ is submersion, and $g$ is an embedding, then choose orientation of $\textrm{Ker} f$ so that 
$o(A) = o(\textrm{Ker} f) \wedge o( f^{-1}([L]))$. Then we set the orientation of the fiber product to be
$$o(A \times_L B) = o(\textrm{Ker} f) \wedge o(B),$$ 
as in Section 7 \cite{C0}.
When both $f$ and $g$ are embedding, the fiber product becomes the intersection $B \cap A$ as
oriented spaces. The key point of this convention is the following identity as oriented spaces, which can be checked easily:
$$ \partial (A \times_L B)= \partial A \times_L B \sqcup (-1)^{n +\dim A} A \times_L \partial B$$

\subsection{Signs for pearl trajectories}\label{ss:sign}
We first fix the orientation of stable manifold $W^{s}(p)$ for each critical point $p$, and
orient the unstable manifold so that we have $T_pL = T_pW^{u}(p) \oplus T_pW^{s}(p)$ as oriented spaces.
In particular, the intersection  $W^{s}(p) \cap W^{u}(p)$ gives a positive intersection number.
Hence, we define the orientation of the Morse trajectory  from $p$ to $q$ as $W^{s}(q) \cap W^{u}(p)$.
As usual, the signed count of such trajectory is obtained by comparing this orientation with that of the flow orientation.

Now, consider the moduli space $\CM_2(\beta)$ of $J$-holomorphic discs of class $\beta$ (of Maslov index two)
with two marked points, with an evaluation map 
$$ev=(ev_0,ev_1): \CM_2(\beta) \to L \times L.$$
For the inclusion $i : W^{u}(p) \times W^{s}(q) \to L \times L$,
Biran-Cornea \cite{BC} defined the moduli space of the pearl complexes with a single pearl from $p$ to $q$ as the fiber product
$$W^{u}(p) \times_{L, ev_0} \big( \CM_2(\beta)_{ev_1} \times_L  W^{s}(q)\big).$$

With this sign rule in mind, we find the precise sign in Equation\eqref{eq:d2wpearl}.
\begin{lemma}\label{lem:BCMFsign}
We have $(\delta^{\mathcal{L}_0, \mathcal{L}_1}_{\textnormal{pearl}} )^2  =  \big( \Phi(\CL_1)  - \Phi(\CL_0) \big)$.
\end{lemma}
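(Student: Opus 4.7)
The plan is to redo the standard Biran--Cornea argument for $\delta_{\textnormal{pearl}}^2$, but keeping careful track of the two holonomy factors coming from $\CL_0$ and $\CL_1$, and then to pin down the overall sign using the fiber-product orientation convention recalled in Section \ref{ss:sign}.

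First I would fix a critical point $p$ and consider the one-dimensional component of the moduli space of pearl trajectories from $p$ to $p$ whose total Maslov index equals $2$ (the only index for which a non-trivial contribution can appear at this level). By the Biran--Cornea compactness and gluing statements (\cite{BC}, Section 5), its compactification $\overline{\CM}(p,p;\text{Maslov }2)$ is a compact one-manifold with boundary, whose boundary components are of the following types:
\begin{enumerate}
\item[(a)] A broken Morse trajectory at an intermediate critical point $q$, producing a pair of pearl trajectories $(p \to q, q \to p)$. Summing over $q$ with the holonomy weights gives exactly the $(p,p)$-matrix coefficient of $(\delta^{\CL_0,\CL_1}_{\textnormal{pearl}})^2$.
\item[(b)] A pearl trajectory $(\gamma_0,u_{11},\gamma_1)$ with $u_{11}$ a constant disc, $\gamma_0,\gamma_1$ constant flows at $p$, and a Maslov-$2$ disc bubble $u_{12}$ attached on the upper semicircle of $u_{11}$. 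Varying the attaching point on the upper semicircle of $\partial u_{12}$ sweeps $\bL$ once, so this type of boundary counts (with holonomy) Maslov-$2$ discs through $p$ whose $\partial$-arc is traversed with respect to $\CL_1$; the total contribution is $\Phi(\CL_1)$.
\item[(c)] The analogous configuration with the bubble attached on the lower semicircle, giving $\Phi(\CL_0)$.
\end{enumerate}
Summing the signed contributions over $\partial \overline{\CM}(p,p;\text{Maslov }2) = 0$ yields the identity up to signs; the question is the relative sign of (b) and (c) with respect to (a).

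Next I would determine the signs. By the convention of \S\ref{ss:sign} the one-pearl moduli space between two critical points is the iterated fiber product
\[
W^u(p)\times_{L,ev_0}\bigl(\CM_2(\beta)\,{}_{ev_1}\!\times_L W^s(q)\bigr),
\]
and the boundary formula $\partial(A\times_L B)=\partial A\times_L B\sqcup(-1)^{n+\dim A}\,A\times_L\partial B$ governs how each type of degeneration inherits an orientation. For (a), breaking of $\gamma_0$ versus $\gamma_1$ combines to reproduce $\delta_{\textnormal{pearl}}\circ\delta_{\textnormal{pearl}}$ with the usual Koszul sign on a composition of odd operators. For (b) and (c), the relevant fiber products are $W^u(p)\times_{L,ev_0}\CM_2(\beta)\times_L\{p\}$ with the evaluation taken along, respectively, the upper and lower boundary arcs of the Maslov-$2$ disc; since switching the role of $\partial_0 u$ and $\partial_1 u$ amounts to conjugating the complex structure on the disc, the two orientations differ by $(-1)$, and exactly one of them picks up an additional sign from the convention $(-1)^{n+\dim A}$. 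A direct bookkeeping, parallel to Lemma~A.1 of \cite{BC2}, shows that (b) contributes $+\Phi(\CL_1)$ and (c) contributes $-\Phi(\CL_0)$, producing the claimed identity.

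The main obstacle is step (2): the careful tracking of the orientation on the disc-bubble strata so as to conclude the relative sign between $\Phi(\CL_1)$ and $\Phi(\CL_0)$. This requires matching our conventions for the order of the two boundary arcs of a Floer/pearl strip with the Biran--Cornea ordering of boundary factors in the iterated fiber product, and verifying that the sign contributed by the Koszul rearrangement of forms agrees with the sign produced by the Leibniz-type boundary formula for fiber products. Once these two signs are shown to cancel on one side and add on the other, the asserted formula
\[
(\delta^{\CL_0,\CL_1}_{\textnormal{pearl}})^2=\Phi(\CL_1)-\Phi(\CL_0)
\]
follows immediately. The remaining verifications (that boundary types (b) and (c) are indeed the only disc-bubble strata contributing, that the parametrization of the bubble's attaching point sweeps $\bL$ with degree one, and that the holonomy factor along a constant arc with a bubble attached is exactly $\rho^b(\partial\beta)$) are routine and parallel to the classical Floer-theoretic verifications of $\delta^2=m_0(L_1)-m_0(L_0)$ in \cite{FOOO}.
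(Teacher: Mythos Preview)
Your outline matches the paper's argument: both consider the one-dimensional moduli space of single-pearl trajectories from $p$ to itself, take its boundary, and identify the three types of contributions (broken flow giving $\delta_{\textnormal{pearl}}^2$, and two disc-bubble strata giving $\Phi(\CL_0)$ and $\Phi(\CL_1)$). However, the entire content of this lemma is the sign, and that is precisely the step you defer. Your heuristic that ``switching $\partial_0 u$ and $\partial_1 u$ amounts to conjugating the complex structure on the disc, so the two orientations differ by $(-1)$'' is not how the paper proceeds, and it is not obviously rigorous: the two bubble strata are not related by an orientation-reversing involution of a single moduli space, they are two distinct boundary components of $\partial\CM_2(\beta)$ whose signs have to be computed separately from the fiber-product boundary formula.

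The paper does this by writing out the Leibniz expansion of $\partial\bigl(W^u(p)\times_L\CM_2(\beta)\times_L W^s(p)\bigr)$ with the explicit signs $(-1)^{n+\mathrm{ind}(p)}$ etc., then computing the sign of the inclusion $\CM_3(\beta_0)\times_L\CM_1(\beta)\hookrightarrow\partial\CM_2(\beta)$ (for the bubble stratum with $k_1=3$, $k_2=1$). This last step requires a careful comparison of the orientation conventions in \cite{FOOO} and \cite{BC2}: the two sources orient the $1$-dimensional automorphism group of the two-pointed disc oppositely, and they glue the two disc factors in opposite order; the paper tracks both discrepancies to conclude the inclusion has sign $(-1)^n$ rather than $(-1)^{n+1}$. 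That is the computation you would actually need to carry out. Also note that in the paper's convention the upper boundary arc carries $\CL_0$, so the bubble on the upper semicircle contributes $\Phi(\CL_0)$, opposite to your labeling in (b) and (c); this does not affect the final identity but indicates that the sign bookkeeping really has to be done rather than guessed.
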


\begin{proof}
We want to show that the sign of the above identity is correct, and the rest of the proof is standard and given in the main body.
We write $\delta^{\mathcal{L}_0, \mathcal{L}_1}_{\textnormal{pearl}}$ by $\delta_{\textnormal{pearl}}$ for simplicity.

Let us consider the moduli space of single pearls from $p$ to $p$ itself, which is given as the fiber product
\begin{equation}\label{eq:pearlptop}
W^u (p) \times \mathcal{M}_2 (\beta) \times W^s (p)
\end{equation}
where $\mu (\beta)=2$ and the product is taken over $L$. Taking the boundary of \eqref{eq:pearlptop} gives
\begin{equation}\label{eq:threepearlbdy}
\begin{array}{l}
\quad (\partial W^u (p) ) \times \mathcal{M}_2 (\beta) \times W^s (p)\\
\cup \,\, (-1)^{n+{\rm ind}(p)} W^u (p) \times ( \partial \mathcal{M}_2 (\beta)) \times W^s(p) \\
\cup \,\,(-1)^{(n + {\rm ind}(p)) + (n + (n-1) )} W^u (p) \times \mathcal{M}_2 (\beta) \times (\partial W^s (p)).
\end{array}
\end{equation}
By the orientation convention in \cite{BC2}, parts of boundaries of unstable and stable manifolds are oriented as follows:
$$ \partial W^u (p) = m (p,q) \times W^u (q), \qquad  \partial W^s (p) = (-1)^{n + {\rm ind} (p)} m(r,p) \times W^s (r) $$
where $m(p,q)$ denotes the moduli space of gradient flow lines from $p$ to $q$. Thus, the first and the second components of \eqref{eq:threepearlbdy} can be rewritten as
$$ m(p,q) \times \left( W^u (q) \times \mathcal{M}_2 (\beta) \times W^s (p)  \right) \cup (-1) \left( W^u (p) \times \mathcal{M}_2 (\beta) \times W^s (r) \right) \times m(r,p) $$
which corresponds to $ (-1)^{{\rm ind} (p)} (\delta_{\textnormal{pearl}})^2$. 
%
On the other hand, the boundary of the middle factor $\mathcal{M}_2 (\beta)$ in \eqref{eq:pearlptop} is nontrivial due to disc bubbles, and we have two more terms $\Phi (\mathcal{L}_0)$ and $\Phi (\mathcal{L}_1)$ in addition to $(-1)^{{\rm ind} (p)} (\delta_{\textnormal{pearl}})^2$. We only check the sign for $\Phi (\mathcal{L}_0)$, and the sign of $\Phi (\mathcal{L}_1)$ can be similarly proven to be opposite to that of $\Phi (\mathcal{L}_0)$. $\Phi (\mathcal{L}_0)$ comes from a  disc bubble attached along the upper boundary of the disc component of the original single pearl, which corresponds $\mathcal{M}_3 (\beta_0) _{ev_1} \times_{ev_0} \mathcal{M}_1 (\beta)$ where $\beta_0$ represents the constant class.

Recall that there is a subtle difference between the orientation conventions in \cite{FOOO} and \cite{BC2}.

\noindent(i) Both of them fix two markings on  the boundary of discs and consider the action of $1$-dimensional automorphism group which preserves these two markings. See for example, \cite[(8.3.2)]{FOOO}. However, they used the opposite orientations for this group  so that the moduli space of discs has opposite orientations for \cite{FOOO} and \cite{BC2}. 

\noindent(ii) Moreover, the role of two markings $z_0, z_1$ used to attach two discs are opposite in \cite{FOOO} and \cite{BC2}, which is equivalent to the switch of positions of two factors in the fiber product $\mathcal{M}_{k_1} (\beta_0) \times_L \mathcal{M}_{k_2} (\beta_1).$

Let us consider the inclusion of the boundary stratum:
$$\mathcal{M}_{k_1} (\beta_0) \times_L \mathcal{M}_{k_2} (\beta) \hookrightarrow \mathcal{M}_2 (\beta).$$
As explained in \cite[Remark A.1.1]{BC2},  if $k_1 =k_2=2$, then  the sign of this inclusion is $(-1)^{n+1}$ for both \cite{FOOO} (see \cite[Proposition 8.3.3]{FOOO}) and \cite{BC2}. In this case, sign differences from (i) and (ii) are both $(-1)$ and 
and cancel each other.

We claim that the sign of this  inclusion is $(-1)^{n}$ when $k_1=3, k_2=1$. First, the inclusion from \cite{FOOO}
has sign $(-1)^{n+1}$, with an additional sign $(-1)$ from (i). Now, in this case, it is not hard to compute the
effect of  switching of two factors (from (ii)) and find that there is no additional sign contribution from this.
This will be needed  for the sign of the second term in \eqref{eq:threepearlbdy}.

Here, we orient $L$ so that we have  $\mathcal{M}_3 (\beta_0) \cong L$.
Consequently, the sign of $\Phi (\mathcal{L}_0)$ in the equation (induced by) \eqref{eq:threepearlbdy} is $(-1)^{ (n + {\rm ind} (p) )} \cdot (-1)^{n} = (-1)^{\rm{ind} (p)  }$, according to the sign rule of \cite{BC2}. Therefore, we have
$$ (-1)^{\rm{ind} (p)} (\delta_{\textnormal{pearl}})^2 + (-1)^{\rm{ind} (p)} ( \Phi (\mathcal{L}_0)) - \Phi (\mathcal{L}_1)) =0,$$
or equivalently, $(\delta_{\textnormal{pearl}})^2 = \Phi (\mathcal{L}_1) - \Phi (\mathcal{L}_0)$.
\end{proof}

\subsection{Sign computations for Lemma \ref{lem:pearld1}}\label{subsec:signpearld1}
We first make the following the sign convention for the Morse-differential $(\delta_{\textnormal{pearl}})_{1}=\delta_{\rm Morse}$.
We first choose the orientation of stable manifolds of $e_I$ as $E_I$ where $E_I = E_{i_1} \wedge \ldots E_{i_{|I|}}$ for $I=\{i_1,\ldots,i_{|I|}\}$ with $i_1<\ldots<i_{|I|}$. Hence, unstable manifolds are oriented as
$$o(T_{e_I} \bL) =  o(T_{e_I} W^u ({e_I})) \wedge E_I$$
For $j \notin I$, suppose $E_j \wedge E_I = s E_{\{j\} \cup I}$ for $s \in \{ \pm 1\}$.
Then,  we have 
$$o(T_{e_{ \{j\} \cup I}}W^{u} ({e_I})) = s \cdot o\big( T_{e_{ \{j\} \cup I}}W^{u} (e_{\{j\} \cup I}) \big) \wedge E_j.$$
Now consider the moduli space $\CM(q'\to q)$ of negative gradient flow lines from $q'$ to $q$,
which is oriented as an intersection $W^{s} (q) \cap W^{u} (q')$ (see subsection \ref{ss:sign} for sign convention).  For $\gamma \in \CM(q'\to q)$, let $T_\gamma$ be the direction of the flow of $\gamma$. Then, we have
$$ o(T_{q}W^u (q)) \wedge  o(T_{q'} W^s (q')) \wedge T_\gamma 
\wedge o(\CM(q'\to q)) = o(T_{q'} \bL),$$ where we assume  $\CM(q'\to q)$ to be oriented 0-dimensional vector spaces.
In particular,  $\gamma \in \CM(q'\to q)$ has a positive sign if the splitting 
$ (-1)^{\deg q'}  o(T_{q}W^u (q)) \wedge T_\gamma$  equals that of $o(T_{q'}W^{u}(q'))$
and has a negative sign otherwise.

We now analyze the sign of a flow line $\gamma$ from $e_I$ to $e_{I\cup \{j\}}$.  The one which passes through $H_j + p$ has tangent vector $T_\gamma = E_j$, and the one which passes through $H_j + \up$ has tangent vector $T_\gamma = -E_j$.  First consider the case $j < \min I$ (when $I = \emptyset$, $\min I := +\infty$).  By our choice of orientations for unstable submanifolds, the splitting $TW^u (e_I)|_\gamma \cong T_{e_{\{j\} \cup I}} W^u ({e_{\{j\} \cup I})} \oplus T_\gamma$ preserves orientation if $T_\gamma = E_j$ and reverses orientation if $T_\gamma = -E_j$.  Hence excluding the factor $(-1)^{|I|}$, the flow line has a positive sign if it is the one passing through $H_j + p$, and has a negative sign if it is the one passing through $H_j + \up$.  As a result when $j < \min I$, the coefficient of $e_{\{j\} \cup I} = e_j \wedge e_I$ in $(\delta_{\textnormal{pearl}})_{1} \cdot e_I$ is $(-1)^{|I|}(z_i - \uz_i) = (-1)^{{\rm deg} (e_I)} (z_i - \uz_i)$.

For the general case $j=1,\ldots,n$, we claim that the flow line which passes through $H_j + p$ has the sign 
$s$ with $E_j \wedge E_I = s E_{\{j\} \cup I}$, and the one which passes through $H_j + \up$ has the sign $-s$.  This finishes the proof of this lemma.  To see this, we compare the orientation of $TW^u  (e_I)|_\gamma$ and that of $T_{e_{\{j\} \cup I}} W^u(e_{\{j\} \cup I} ) \oplus \langle E_j \rangle$.  By definition the orientation forms have the relation
$$ o(T_{e_I} \bL|_\gamma) = o(TW^u (e_I)|_\gamma) \wedge E_I = o(T_{e_{\{j\} \cup I}} W^u (e_{\{j\} \cup I})) \wedge E_{\{j\} \cup I} = o(T_{e_{\{j\} \cup I}} W^u (e_{\{j\} \cup I})) \wedge (s \cdot E_j \wedge E_I) $$
and hence $o(TW^u (e_I)|_\gamma) = s\cdot o(T_{e_{\{j\} \cup I}} W^u (e_{\{j\} \cup I})) \wedge  E_j$.  By definition, the flow line $\gamma$ has the sign $(-1)^{|I|}s = (-1)^{{\rm deg} (e_I)} s$.

\subsection{Sign computations for Lemma \ref{lem:pearld2}}\label{subsec:signpearld2}
We next derive the sign of a single $\beta_i$ pearl trajectory from $e_I$ to $e_{I-\{j\}}$ appearing in the proof of Lemma \ref{lem:pearld2}. The moduli space of such pearl trajectories is oriented as
$W^u (e_{I}) \times_{\bL,ev_1} (\CM_2(\beta_i) \times_{ev_0,\bL} W^s (e_{I- \{j\}}))$ (see Section \ref{ss:sign}).
In the toric cases, we can equip the Lagrangian torus $\bL$ with the standard spin structure and if $\beta_i$ is a basic disc class,
then we have that $ev_0:\CM_1(\beta_i) \to \bL$ is an orientation preserving homeomorphism.
(This follows from Proposition 8.1 \cite{C0} since we take the opposite orientation of both $\CM_1(\beta_i)$ and $\bL$ 
compared to that of \cite{C0}.)
Hence, $o(\CM_2(\beta_i)_{ev_0})= (-1)^{n+1} o(\partial D_0^2 \times \CM_1(\beta_i) )$ and we have
$$ o(\CM_2(\beta_i)_{ev_0}\times_L W^{s}(e_{I-\{j\}}))=(-1)^{n+1} o( \partial \beta_i \times W^{s}(e_{I-\{j\}}) ),$$
and hence
$$W^u (e_{I}) \times_{\bL} (\CM_2(\beta_i) \times_L W^s (e_{I- \{j\}}))= \left( (-1)^{n+1} \partial \beta \times W^s (e_{I-\{ j\}}) \right) \cap W^u (e_I) .$$

Since $W^s (e_I) \cap W^u (e_I) = +1$, it suffices to compare the orientations on $(-1)^{n+1} \partial \beta_i \times W^s (e_{I-\{ j\}}) $ and $W^s (e_I)$. Note that the former (after being projected onto $E_I$-plane) is equivalent to $(-1)^{n+1} s_{i,j} \times E_{I-\{ j\}}$ and the latter is simply $E_I$ itself. Thus, the total sign difference between $(-1)^{n+1} \partial \beta_i \times W^s (e_{I-\{ j\}})$ and $W^s (e_{I})$ is $ (-1)^{n+1} s_{i,j} s^*$
for $s^*\!=\,$the sign difference of $E_{I}$ and $(E_j \wedge E_{I-\{j\}})$. Consequently, 
$$\langle (\delta_{\textnormal{pearl}})_{-1} (e_I), e_{I-\{j\}} \rangle = (-1)^{{\rm ind} (e_{I - \{j\}} )} (-1)^{n+1} s_{i,j} s^\ast Z_{\gamma_j} \cdot Z_{\partial D_i} = (-1)^{{\rm deg} (e_I)} s^\ast \left( s_{i,j}  Z_{\gamma_j} \cdot Z_{\partial D_i} \right)$$
where $(-1)^{{\rm ind} (e_{I - \{j\}} )}=(-1)^{n-|I| + 1}$ comes from the sign factor in the second term of \eqref{eq:pft1}. 

Note that $\iota_{e_j} e_I = s^* e_{I-\{j\}}$, and hence
we have shown that the sign factor of $(-1)^{\deg} \delta_1$ equals $s_{i,j}$ as claimed.  

\end{document}